\theoremstyle{plain}
\renewcommand\thefigure{\thesection.\@arabic\c@figure}
\renewcommand\thetable{\thesection.\@arabic\c@table}
\newtheorem{thm}{\bf Theorem}
\newtheorem{proposition}{Proposition}[section]
\newenvironment{theorem}{\begin{thm}} {\end{thm}}
\newtheorem{cor}{\bf Corollary}
\newtheorem{lmm}{\bf Lemma}
\newenvironment{lemma}{\begin{lmm}}{\end{lmm}}
\theoremstyle{remark}
\newtheorem{rem}{Remark}[section]
\def \ri {{\rm i}}
\newcommand{\bs}[1]{\boldsymbol{#1}}
\title{An Efficient Iterative Method for Solving Multiple Scattering in Locally Inhomogeneous Media}
\author{Ziqing Xie, Rui Zhang, Bo Wang, Li-lian Wang}
\begin{document}

\maketitle

%
%
%
%
%
%
%

\begin{abstract}
In this paper, an efficient iterative method is proposed for solving multiple scattering problem in locally inhomogeneous media. The key idea is to enclose the inhomogeneity of the media by well separated artificial boundaries and then apply purely outgoing wave decomposition for the scattering field outside the enclosed region. As a result, the original multiple scattering problem can be  decomposed into a finite number of single scattering problems, where each of them communicates with the other scattering problems only through its surrounding artificial boundary. Accordingly,  they can be solved in a parallel manner at each iteration. This framework enjoys a great flexibility in using different combinations of   iterative algorithms and single scattering problem solvers. The spectral element method seamlessly integrated  with the non-reflecting boundary condition and the GMRES iteration is advocated  and implemented in this work. The convergence of the proposed method is proved by using the compactness of involved integral operators. Ample numerical examples are presented to show its high accuracy and efficiency.
\end{abstract}

{\bf key word:} Multiple scattering, inhomogeneous media, iterative method, spectral element method,  non-reflecting boundary condition, GMRES iteration


\section{Introduction}
The acquaintance  of many physical phenomena and engineering processes can be significantly  enhanced
by accurately simulating the multiple scattering problems involving configurations of many obstacles.
Typically, for two-dimensional time-harmonic acoustic multiple scattering in inhomogeneous media,
we consider the Helmholtz equation of the form
\begin{equation}\label{helmholtzeq}
\Delta u(\bs x) + \kappa^2n^2(\bs x) u(\bs x) = 0,  \quad\mathrm{in} \ \ \mathbb{R}^2\setminus \Omega,
\end{equation}
where $u=u^{\rm sc}+u^{\rm in}$ is the total field, $\kappa$ is the wave number, $n^2(\bs x)$ is the index of refraction, $\Omega$ is a region occupied by $M$ impenetrable scatterers in $\mathbb R^2$, see Fig. \ref{scatterersconfigure}. The scattering field $u^{\rm sc}$ satisfies the Sommerfeld radiation condition
\begin{equation}
\frac{\partial u^{\rm sc}}{\partial r}-\ri\kappa u^{\rm sc}={\color{blue}o}\big( r^{-1/2}\big),\quad  \mathrm{as} \ \ r:=|\bm{x}|\rightarrow \infty.
\end{equation}
On the boundaries of the scatterers, the Dirichlet, Neumann or Robin boundary conditions can  be imposed according to different  materials of the scatterers.  Here, let $M_1$, $M_2$, $M_3$ be the number of scatterers with Dirichlet, Neumann and Robin boundary conditions, respectively,  and denote by  $\Omega_{1j}$, $\Omega_{2j}$, $\Omega_{3j}$ the $j$-th scatterer in each group. Accordingly, we  denote
the domain of all obstacles and its boundary by
\begin{equation}\label{OmegaObs}
\Omega=\Omega_1\cup\Omega_2\cup\Omega_3,\quad \partial\Omega=\partial\Omega_1\cup\partial\Omega_2\cup\partial\Omega_3\;\;\;  {\rm with}\;\;\;  \Omega_{i}=\bigcup_{j=1}^{M_i} \Omega_{ij},
\end{equation}
for $i=1,2,3$ corresponding to the Dirichlet, Neumann and Robin boundary conditions, respectively.
For notational convenience, we express these three types of    boundary conditions on the scatterers as
\begin{equation}\label{bconscatterers}
\mathscr{B}_iu=0, \quad \bs x\in\partial\Omega_i,\quad i=1,2,3,
\end{equation}
where
\begin{equation}
\mathscr{B}_1=\mathcal I,\quad \mathscr{B}_2=\frac{\partial}{\partial\bs n},\quad \mathscr{B}_3=\frac{\partial}{\partial\bs n}+h\mathcal I.
\end{equation}
Here, $\mathcal I$ is the identity operator, $\bs n$ is the unit outward normal on $\partial\Omega_2$ and $h$ is a given function defined on $\partial\Omega_3$.
\begin{figure}[htbp]
	\centering
	\subfigure[]{\includegraphics[scale=0.55]{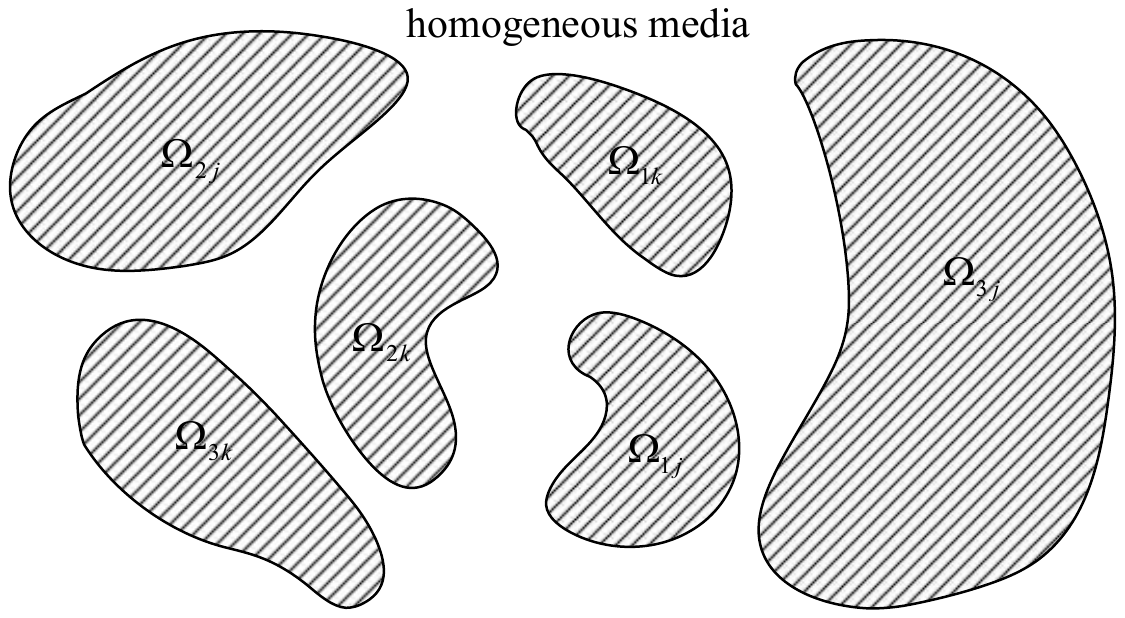}}\quad
	\subfigure[]{\includegraphics[scale=0.5]{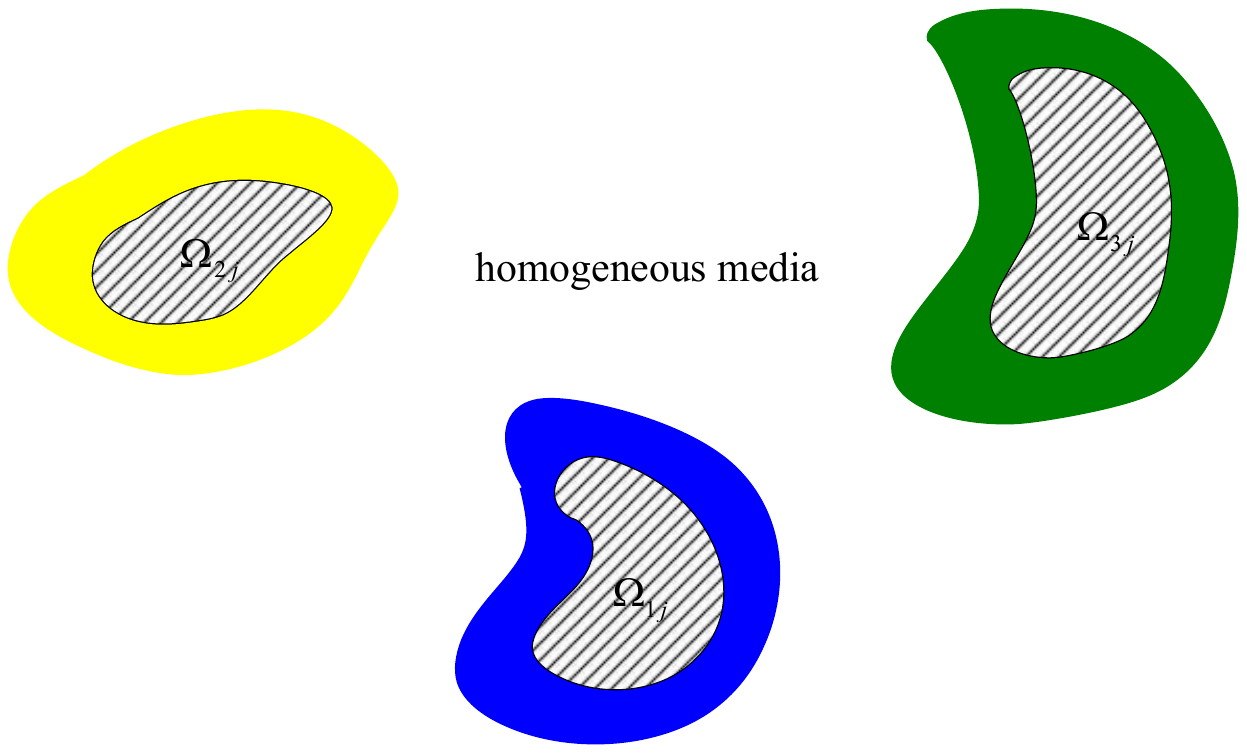}}
	\caption{Configurations of multiple scattering. (a): Scatterers embedded in homogeneous media;   (b): Well-separated scatterers with inhomogeneous media in colored area.}
	\label{scatterersconfigure}
\end{figure}

 It is known that  analytic solutions for wave scattering problems from multiple arbitrary shaped obstacles embedded in inhomogeneous media are not available.  Partially for this reason,
 many early works are mostly restricted to cylindrical and spherical obstacles embedded in homogeneous media, where the modal expansions of the scattered fields play an essential role (cf. \cite{young1975multiple,ragheb1985scattering,elsherbeni1994comparative,gabrielli2001acoustic,amirkulova2015acoustic}).  We highlight that the reader-friendly monograph by  Martin \cite{martin2006multiple} was largely concerned with time-harmonic waves with multiple obstacles  and with exact methods including separation of variables, integral equations and $T$-matrices, but only the last chapter is concerned with some numerics.

  Among limited works for multiple scattering problems  with general bounded  scatterers (compared with intensive studies of single scattering problems), the boundary integral method  is one of the methods of choice. 
By reformulating a scattering problem into an integral equation on the boundary of scatterers (cf. \cite{colton2013integral,kleefeld2012exterior,acosta2015surface}), numerical methods (e.g., boundary element methods) have been developed based on the Galerkin or collocation formulations (cf.  \cite{martin1985integral,ganesh2011efficient}). Very recently, the boundary integral method with fast multipole acceleration and hybrid numerical-asymptotic boundary element method have been investigated for relatively  low frequency (cf. \cite{lai2018fast}) and high frequency problems (cf. \cite{chandler2015high,gibbs2019high}), respectively. However, it is noteworthy that the boundary integral method relies on the Green's function to derive integral equation on the boundary,  which in general is not applicable to inhomogeneous media.

  In analogue with solving general single scattering problems,  one can reduce the unbounded domain by a proper  domain truncation technique, before applying  a finite-domain solver, e.g., the finite element method.
  Grote and Kirsch \cite{grote2004dirichlet} proposed  the  non-reflecting boundary conditions  (NRBC)  based on Dirichlet-to-Neumann operators for truncating  multiple time-harmonic acoustic scattering problems with well-separated scatterers in two dimensions, where each scatterer is surrounded by an NRBC.
  The framework therein  is well-suited for numerical discretization, but it requires to solve coupled systems.
%
Indeed, Acosta and Villamizar \cite{acosta2010coupling, acosta2012dtn} discussed the multiple acoustic scattering from scatterers of complex shape using coupling of Dirichlet-to-Neumann boundary condition and the finite difference method.  
In practice, the iterative method is more desirable.
Based on the decomposition of the scattering wave into purely outgoing waves, Neumann iterative method was proposed in \cite{balabane2004boundary}, where at each iteration, only single scattering problems need to be solved.
This iterative technique has been further developed for high frequency problems with a large number of scatterers, see \cite{ecevit2009analysis, anand2010analysis,ganesh2011efficient,Geuzaine2010An,ganesh2015efficient} and the references therein. Recently, a block Gauss-Seidel iterative method was employed to solve the linear systems resulted from the finite element discretization (cf. \cite{li2013two, jiang2012adaptive,wu2016adaptive}). Error estimates  between the iterative scheme at continuous level  and its finite element discretization was analyzed in \cite{jiang2012adaptive}.   Most of the aforementioned works are for homogeneous media.

In this paper, we propose an efficient iterative method for solving multiple scattering problem in locally inhomogeneous media, and show  the convergence of the method. The algorithm consists of three components. Firstly, the scatterers and inhomogeneity of the media are enclosed by well-separated artificial boundaries such that purely outgoing wave decomposition is applicable outside the enclosed domains.  Then the  scattering field is  decomposed into purely outgoing waves and boundary integral equations with respect to density functions on the artificial boundaries are formulated. Secondly, we change the unknowns in the resulted  boundary integral equations by using solution operators of the interior and exterior problems. New equations using the value of purely outgoing fields and total field on the artificial boundaries as unknowns can be derived. Thirdly, the iterative methodology (e.g., Gauss-Seidel, general minimum residue) is applied.

The proposed method enjoys  the advantage that only the interior problems (together with analytic  formulas for solutions of exterior problems) with respect to single scatterer need to be solved separately  at each iteration.  Various single scattering problem solvers and iterative methods can be applied. Thus, this approach possesses   excellent flexibility and high parallelizability. In this work, the high order spectral element method with non-reflecting boundary condition (NRBC) and GMRES iterative method is adopted. We remark that other numerical PDE solvers (e.g., finite element method or finite difference method) can also be used for solving single scattering problems. Here,  we basically solve a boundary integral equation on the artificial boundaries. The convergence of the method can be proved by using the compactness of involved integral operators. Moreover, the well-conditioning  feature of the boundary integral equation leads to a small number of iterations for convergence. This is the main difference between our method and the Neumann iterative method.  Numerical results show that the number of iterations is nearly independent of the mesh size and polynomial degree used in the discretisation. This paper will focus  on two dimensional scenarios, but the proposed method is extendable to three dimensional cases. 

Note that the final discrete systems resulted from the discretisation proposed by \cite{grote2004dirichlet,acosta2010coupling, acosta2012dtn} have block structure, so the block iterative methods can be directly applied (e.g.,  block Gauss-Seidel iterative method \cite{li2013two, jiang2012adaptive}).  Although  the pursuit of ``decoupling'' between scatterers is similar to these works, the derivation of our iterative algorithm is from the boundary integral equations that leads to the use of 
purely outgoing waves  rather than the whole scattering field on the boundaries of the scatterers (homogeneous media case) or the artificial boundary (inhomogeneous media case) for the communication between scatterers.  Moreover, this allows us to conduct the convergence analysis based on the tools in the boundary integral equations.  Indeed, the numerical comparisons  show that
such a  treatment of the interactions between scatterers is more effective than the existing approaches in particular for a large number of scatterers. This also can relax the assumption of the well separateness of the scatterers when the problems in homogeneous media are tackled.    

The rest of this paper is organized as follows. In section \ref{sect2}, we use the multiple scattering problem in homogeneous media to illustrate the main idea of the iterative method. By using the classic potential theory, the 	boundary integral equations with respect to purely outgoing fields are derived for three typical boundary conditions. Then the iterative algorithm using GMRES iteration is presented. In section \ref{sect3}, the iterative method in locally inhomogeneous media is proposed. We first introduce artificial boundaries to enclose the inhomogeneity of the media and then show that an outgoing wave decomposition can be used to derive equations with respect to outgoing fields and total field on the artificial boundaries. Iterative method together with spectral element discretization is proposed for the coupled equations. In section \ref{sect4}, we give theoretical proof for the convergence of the iterative method by using the compactness of the involved integral operators. Various numerical examples are presented in section \ref{sect5}. By compared with a direct spectral element discretization to the truncation using one sufficiently large artificial boundary to enclose all scatterers inside, we validate the effectiveness of our method. The more efficient communication strategy between scatterers is also validated by numerical comparisons with  the approach  in \cite{grote2004dirichlet}.

\section{Iterative method for multiple scattering in homogeneous media}\label{sect2}
In this section, we focus on the multiple scattering problem in homogeneous media, i.e., $n(\bs x)\equiv 1$ in \eqref{helmholtzeq}, and propose an iterative method based on the boundary integral equations on the scatterers. As we shall see in the next section,  this actually paves the way for the algorithm and analysis of the multiple scattering in locally inhomogeneous media, where the boundary integral  formulations on the artificial boundaries can be seamlessly integrated with the interior solver for each single scatterer.   In particular, due to the circular artificial boundaries, the integral operators related to the single scattering problems \eqref{localscatteringproblemoutside}  can be solved analytically, so the boundary integral operators are only used in the derivation and the convergence analysis.

\subsection{Integral equations on the boundaries of the scatterers}
Given a generic bounded domain $D\subset\mathbb{R}^2$ and a density function $\phi\in L^2(\partial D)$, the corresponding single-layer and double-layer potentials are defined as (cf. \cite{liu1995helmholtz, colton2013integral}):
\begin{equation}\label{layeredpotential}
\mathcal{S}\phi(\bm{x}):= \int_{\partial D} G_{\kappa}(\bm{x},\bm{y})\phi(\bm{y})\,\mathrm{d}\bm{y},\quad \mathcal{D}\phi(\bm{x}):=\int_{\partial D}\frac{\partial G_{\kappa}(\bm{x},\bm{y})}{\partial \bm{n}(\bm{y})}\phi(\bm{y})\,\mathrm{d}S_{\bm{y}},\quad \bm{x}\notin \partial D,
\end{equation}
where \begin{equation}\label{greenfunction}
G_{\kappa}(\bm{x},\bm{y})=
\displaystyle -\frac{\ri}{4}H_0^{(1)}(\kappa|\bm{x}-\bm{y}|),
\end{equation}
is the Green's function in free space. For any function $v(\bs x)$, we distinguish
 its  limit values  obtained by approaching the boundary $\partial D$ from inside $\mathbb{R}^2\setminus\bar{D}$
and $D$, respectively,  by
\begin{equation}
v^+(\bm{x})=\lim_{\substack{\bm{y}\rightarrow \bm{x}\\ \bm{y}\notin D}} v(\bm{y}),\quad v^-(\bm{x})=\lim_{\substack{\bm{y}\rightarrow \bm{x}\\ \bm{y}\in D}} v(\bm{y}),\quad \ \bm{x}\in \partial D.
\end{equation}
Similarly, we denote the limit values of the normal derivative at $\bs x\in\partial D$ from two sides  by
\begin{eqnarray}
{\partial}^{+}_{\bm{n}} v (\bm{x}):=\lim\limits_{\substack{\bm{y}\rightarrow \bm{x}\\ \bm{y}\notin D}} \nabla v(\bm{y})\cdot\bm{n_x}, \quad
\partial^-_{\bm{n}} v (\bm{x}):=\lim\limits_{\substack{\bm{y}\rightarrow \bm{x}\\ \bm{y}\in D}} \nabla u(\bm{y})\cdot\bm{n}_{\bm{x}}, \quad \ \bm{x}\in \partial D.
\end{eqnarray}
Although the Green's function is singular as $\bm x\rightarrow\bm y$, the limits of $\mathcal{S}\phi(\bm{x}), \mathcal{D}\phi(\bm{x})$ as $\bs x$ approach $\partial D$ remain finite and well-defined. Indeed,  in electrostatics, they represent a physical potential and an electric field generated by finite charge or dipole densities. According to the potential theory (cf. \cite{verchota1984layer, colton2013integral}),  they are given by the following Cauchy principle value (p.v.)
and Hadamard finite part (p.f.).
\begin{proposition}\label{theorem1}
	The single layer potential $\mathcal{S}\phi(\bm{x})$ is continuous across the boundary $\partial D$, and
	\begin{equation}
	\mathcal{S}\phi(\bm{x})=\mathrm{p.v.} \int_{\partial D} G_{\kappa}(\bm{x},\bm{y})\phi(\bm{y})\mathrm{d}S_{\bm{y}},\quad \  \bm{x}\in\partial D,
	\end{equation}
	while $\frac{\partial \mathcal{S}\phi(\bm{x})}{\partial \bs n}$ has a jump, namely
	\begin{eqnarray}
	{\partial}^{\pm}_{\bm{n}} \mathcal{S}\phi(\bm{x})=\mathrm{p.v.}\int_{\partial D}\frac{\partial G_{\kappa}(\bm{x},\bm{y})}{\partial \bm{n}(\bm{x})}\phi(\bm{y})\mathrm{d}S_{\bm{y}}\mp \frac{\phi(\bs x)}{2} ,\quad \  \bm{x}\in\partial D,
	\end{eqnarray}
	which implies
	\begin{equation}
	\big\llbracket {\partial}_{\bm{n}} \mathcal{S}\phi(\bm{x})\big\rrbracket:=\partial_{\bm{n}}^+\mathcal{S}\phi(\bm{x})-\partial_{\bm{n}}^-\mathcal{S}\phi(\bm{x})=-\phi(\bs x), \quad \  \bm{x}\in\partial D.
	\end{equation}
\end{proposition}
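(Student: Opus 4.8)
\emph{Reduction to the Laplace case.} The plan is to derive both statements from the classical jump relations for the Laplace single-layer potential and transfer them to $G_\kappa$ by a regularity argument. Let $G_0(\bm{x},\bm{y})=c\ln|\bm{x}-\bm{y}|$ be a fundamental solution of $\Delta$, the constant $c$ chosen to match the leading singularity of \eqref{greenfunction}; since $H_0^{(1)}(t)=\tfrac{2\ri}{\pi}\ln t+(\text{a function real-analytic in }t)$ near $t=0$, the remainder $R_\kappa(\bm{x},\bm{y}):=G_\kappa(\bm{x},\bm{y})-G_0(\bm{x},\bm{y})$ extends to a $C^1$ function of $(\bm{x},\bm{y})$ in a neighborhood of the diagonal (for $\partial D$ of class $C^2$). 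Consequently the potential $\int_{\partial D}R_\kappa(\bm{x},\bm{y})\phi(\bm{y})\,\mathrm{d}S_{\bm{y}}$ and its gradient are continuous across $\partial D$, so the continuity statement and the \emph{sizes} of the jumps for $\mathcal{S}$ are the same as for the Laplace single-layer potential $\mathcal{S}_0\phi(\bm{x}):=\int_{\partial D}G_0(\bm{x},\bm{y})\phi(\bm{y})\,\mathrm{d}S_{\bm{y}}$; it therefore suffices to prove the proposition for $\mathcal{S}_0$ and add $R_\kappa$ back at the end.

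\emph{Continuity of $\mathcal{S}_0\phi$.} Parametrizing $\partial D$ by arc length, the kernel $\ln|\bm{x}-\bm{y}|$ has only an integrable (logarithmic) singularity. For any $\bm{x}$, split $\partial D$ into a short arc $\gamma_\delta$ around the point of $\partial D$ nearest $\bm{x}$ and its complement: on the complement the integrand is uniformly continuous in $\bm{x}$, whereas $\bigl|\int_{\gamma_\delta}\ln|\bm{x}-\bm{y}|\,\phi(\bm{y})\,\mathrm{d}S_{\bm{y}}\bigr|\le\|\phi\|_{L^2(\partial D)}\bigl\|\ln|\bm{x}-\cdot|\bigr\|_{L^2(\gamma_\delta)}$, and the last factor tends to $0$ uniformly in $\bm{x}$ as $\delta\to0$. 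Hence $\mathcal{S}_0\phi\in C(\mathbb{R}^2)$, so $\mathcal{S}\phi$ is continuous across $\partial D$ and there equals the absolutely convergent (a fortiori principal-value) boundary integral.

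\emph{Jump of the normal derivative.} Fix $\bm{x}_0\in\partial D$, write $\bm{n}=\bm{n}(\bm{x}_0)$, and for $h>0$ let $\bm{x}_h^{\pm}=\bm{x}_0\pm h\bm{n}$ (so that $\bm{x}_h^{+}\in\mathbb{R}^2\setminus\overline{D}$ and $\bm{x}_h^{-}\in D$). Then
\[
\bm{n}\!\cdot\!\nabla\mathcal{S}_0\phi(\bm{x}_h^{\pm})=\int_{\partial D}\bm{n}\!\cdot\!\nabla_{\bm{x}}G_0(\bm{x}_h^{\pm},\bm{y})\,\phi(\bm{y})\,\mathrm{d}S_{\bm{y}}=\phi(\bm{x}_0)\,I(\bm{x}_h^{\pm})+J(\bm{x}_h^{\pm}),
\]
where $I(\bm{x}):=\int_{\partial D}\bm{n}\!\cdot\!\nabla_{\bm{x}}G_0(\bm{x},\bm{y})\,\mathrm{d}S_{\bm{y}}$ and $J(\bm{x}):=\int_{\partial D}\bm{n}\!\cdot\!\nabla_{\bm{x}}G_0(\bm{x},\bm{y})\bigl(\phi(\bm{y})-\phi(\bm{x}_0)\bigr)\,\mathrm{d}S_{\bm{y}}$. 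The kernel is $O(|\bm{x}-\bm{y}|^{-1})$, but in $J$ the extra factor vanishes as $\bm{y}\to\bm{x}_0$ (after a density reduction to Hölder continuous $\phi$), so the short-arc splitting above shows $J$ is continuous across $\bm{x}_0$, with limiting value $\mathrm{p.v.}\int_{\partial D}\partial_{\bm{n}}G_0(\bm{x}_0,\bm{y})\bigl(\phi(\bm{y})-\phi(\bm{x}_0)\bigr)\,\mathrm{d}S_{\bm{y}}$. For $I$, use $\nabla_{\bm{x}}G_0(\bm{x},\bm{y})=-\nabla_{\bm{y}}G_0(\bm{x},\bm{y})$ to write
\[
I(\bm{x})=-\int_{\partial D}\frac{\partial G_0(\bm{x},\bm{y})}{\partial\bm{n}(\bm{y})}\,\mathrm{d}S_{\bm{y}}+\int_{\partial D}\bigl(\bm{n}(\bm{y})-\bm{n}\bigr)\!\cdot\!\nabla_{\bm{y}}G_0(\bm{x},\bm{y})\,\mathrm{d}S_{\bm{y}};
\]
on a $C^2$ curve $|\bm{n}(\bm{y})-\bm{n}|\lesssim|\bm{y}-\bm{x}_0|$ removes one power of the singularity, so the second integral is continuous across $\bm{x}_0$, while the first is the classical Gauss integral, which by the divergence theorem is piecewise constant on $\mathbb{R}^2\setminus\partial D$ and jumps by $\pm1$ across $\partial D$. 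Collecting the three contributions gives $\partial_{\bm{n}}^{\pm}\mathcal{S}_0\phi(\bm{x}_0)=\mathrm{p.v.}\int_{\partial D}\partial_{\bm{n}}G_0(\bm{x}_0,\bm{y})\phi(\bm{y})\,\mathrm{d}S_{\bm{y}}\mp\tfrac12\phi(\bm{x}_0)$, the sign of the $\tfrac12$ term being fixed by the orientation of $\bm{n}$ and the normalization \eqref{greenfunction}; adding back the $C^1$ term $R_\kappa$ yields the displayed formula with $G_\kappa$, and subtracting the ``$+$'' and ``$-$'' relations gives $\big\llbracket\partial_{\bm{n}}\mathcal{S}\phi\big\rrbracket=-\phi$. (Equivalently, the jump relation drops out in one line from the divergence theorem applied to a thin pillbox straddling $\partial D$, since $(\Delta+\kappa^2)\mathcal{S}\phi$ equals $\phi$ times arc-length measure on $\partial D$ in the distributional sense, up to the sign in \eqref{greenfunction}.)

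\emph{Main obstacle.} The delicate part is the treatment of $I$: one must justify differentiation under the integral, trade the fixed normal $\bm{n}(\bm{x}_0)$ for the variable normal $\bm{n}(\bm{y})$ at the price of a genuinely continuous remainder, and evaluate the solid-angle integral by the divergence theorem so as to pin down the constant $\tfrac12$ and, above all, its sign relative to the orientation of $\bm{n}$ and the normalization \eqref{greenfunction}. A secondary point is that the proposition is stated for $\phi\in L^2(\partial D)$, whereas the pointwise arguments above are cleanest for $\phi\in C(\partial D)$: the general $L^2$ version---with $v^{\pm}$ and $\partial_{\bm{n}}^{\pm}v$ read as nontangential limits a.e.\ together with $L^2(\partial D)$ convergence of the corresponding boundary operators---follows by density from the Calder\'on--Zygmund $L^2$-boundedness of the principal-value single-layer operator and of its normal derivative, as in \cite{verchota1984layer,colton2013integral}.
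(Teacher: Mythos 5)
The paper offers no proof of Proposition \ref{theorem1} at all --- it simply quotes the classical jump relations with references to Verchota and Colton--Kress --- and your argument is exactly the classical proof found in those references: split off the $C^1$ remainder $G_\kappa-G_0$, prove continuity of the logarithmic single layer, obtain the normal-derivative jump from the Gauss integral plus a continuous remainder, and recover the $L^2$ statement by density together with the Calder\'on--Zygmund bounds. In structure this is sound, and the secondary points (H\"older-density reduction, nontangential a.e.\ interpretation for $\phi\in L^2$) are handled the standard way.

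The problem is that the one step you explicitly defer --- ``the sign of the $\tfrac12$ term being fixed by the orientation of $\bm n$ and the normalization \eqref{greenfunction}'' --- is precisely where the argument, as written, does not close. Matching the leading singularity of \eqref{greenfunction}, i.e.\ $G_\kappa=-\tfrac{\ri}{4}H_0^{(1)}(\kappa|\bm x-\bm y|)$, forces $G_0=+\tfrac{1}{2\pi}\ln|\bm x-\bm y|$, so $\Delta G_0=+\delta$ and the Gauss integral $\int_{\partial D}\partial_{\bm n(\bm y)}G_0(\bm x,\bm y)\,\mathrm{d}S_{\bm y}$ equals $1$ inside, $\tfrac12$ on, and $0$ outside $\partial D$ (outward normal). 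Feeding this into your own decomposition of $I$ gives $\partial_{\bm n}^{\pm}\mathcal S_0\phi=\mathrm{p.v.}\int_{\partial D}\partial_{\bm n(\bm x)}G_0\,\phi\,\mathrm{d}S_{\bm y}\pm\tfrac12\phi$ and $\llbracket\partial_{\bm n}\mathcal S\phi\rrbracket=+\phi$, the opposite of what you assert and of what the proposition states. A one-line check: for $D$ the unit disk, $\phi\equiv1$ and the Laplace kernel $\tfrac{1}{2\pi}\ln|\bm x-\bm y|$, the potential is $\ln|\bm x|$ outside and $0$ inside, so the exterior normal derivative is $+1$ and the jump is $+\phi$; equivalently, your pillbox remark gives $(\Delta+\kappa^2)\mathcal S\phi=+\phi\,\mathrm{d}S$ under \eqref{greenfunction}, hence again $\llbracket\partial_{\bm n}\mathcal S\phi\rrbracket=+\phi$. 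The signs $\mp\tfrac12\phi$ and $-\phi$ in the proposition are the classical ones for the standard outgoing kernel $+\tfrac{\ri}{4}H_0^{(1)}$; so the mismatch is really an internal inconsistency of the paper between \eqref{greenfunction} and Proposition \ref{theorem1}, but your proof cannot simultaneously start from \eqref{greenfunction} as printed and end with the displayed signs. To be complete you must actually evaluate the Gauss integral (or the pillbox identity) and then either flag the sign in \eqref{greenfunction} as a typo or flip the signs in your conclusion; with that done, the rest of your sketch is correct and coincides with the cited classical treatment.
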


\begin{proposition}\label{theorem2}
	The double layer potential $\mathcal{D}\phi(\bm{x})$ is discontinuous across $\partial D$, and there holds
	\begin{eqnarray}
	(\mathcal{D}\phi(\bm{x}))^{\pm}=\mathrm{p.v.}\int_{\partial D}\frac{\partial G_{\kappa}(\bm{x},\bm{y})}{\partial \bm{n}(\bm{y})}\phi(\bm{y})\mathrm{d}S_{\bm{y}}\pm \frac{\phi(\bs x)}{2},\quad \  \bm{x}\in\partial D,
	\end{eqnarray}
	and
	\begin{equation}
	\big\llbracket \mathcal{D}\phi(\bm{x})\big\rrbracket:=(\mathcal{D}\phi(\bm{x}))^+-(\mathcal{D}\phi(\bm{x}))^-=\phi(\bs x),\quad \  \bm{x}\in\partial D.
	\end{equation}
	Meanwhile, the normal derivative of the double layer potential is continuous across $\partial D$ and
	\begin{equation}
	\partial_{\bs n} \mathcal{D}\phi(\bm{x})=\mathrm{p.f.}\int_{\partial D}\frac{\partial^2 G_{\kappa}(\bm{x},\bm{y})}{\partial\bm{n}(\bs x)\partial \bm{n}(\bm{y})}\phi(\bm{y})\mathrm{d}S_{\bm{y}},\quad \  \bm{x}\in\partial D.
	\end{equation}
\end{proposition}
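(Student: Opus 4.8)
The statement is the classical pair of jump relations for the double-layer potential across a smooth curve, and I would establish it by reducing to the harmonic (Laplace) kernel and then invoking the standard potential theory (cf. \cite{colton2013integral, verchota1984layer}). First, split the free-space Green's function as $G_\kappa(\bs x,\bs y)=G_0(\bs x,\bs y)+R_\kappa(\bs x,\bs y)$, where $G_0(\bs x,\bs y)=-\frac{1}{2\pi}\log|\bs x-\bs y|$ is the fundamental solution of $-\Delta$ and $R_\kappa:=G_\kappa-G_0$. From the small-argument expansion of $H_0^{(1)}$ one checks that $R_\kappa$ is $C^1$ up to $\partial D$ from both sides, with at worst a logarithmic singularity in its second derivatives; hence the double-layer potential built from the kernel $\partial_{\bs n(\bs y)}R_\kappa$, as well as its normal derivative, extend continuously across $\partial D$ and contribute nothing to the jumps. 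It therefore suffices to prove the statement for the harmonic double-layer operator $\mathcal D_0$.

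Second, for the jump of $\mathcal D_0\phi$ itself, I fix $\bs x_0\in\partial D$ and write
\begin{equation*}
\mathcal D_0\phi(\bs x)=\int_{\partial D}\frac{\partial G_0(\bs x,\bs y)}{\partial \bs n(\bs y)}\big(\phi(\bs y)-\phi(\bs x_0)\big)\,\mathrm{d}S_{\bs y}+\phi(\bs x_0)\int_{\partial D}\frac{\partial G_0(\bs x,\bs y)}{\partial \bs n(\bs y)}\,\mathrm{d}S_{\bs y}.
\end{equation*}
Since $\partial_{\bs n(\bs y)}G_0(\bs x,\bs y)=\frac{1}{2\pi}\frac{(\bs x-\bs y)\cdot\bs n(\bs y)}{|\bs x-\bs y|^2}$ and $(\bs x-\bs y)\cdot\bs n(\bs y)=O(|\bs x-\bs y|^2)$ on a $C^2$ curve, the first integrand is bounded (weakly singular away from the boundary); together with the Hölder continuity of $\phi$ this first term defines a function that is continuous across $\partial D$, whose value on $\partial D$ is precisely the principal-value integral. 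The second integral equals $\phi(\bs x_0)$ times the Gauss (solid-angle) integral, which takes the values $-1$, $-\frac12$, $0$ according as $\bs x$ lies in $D$, on $\partial D$, or in $\mathbb R^2\setminus\bar D$. Adding the two pieces and reading off the limits $v^{\pm}$ gives $(\mathcal D_0\phi)^{\pm}=\mathrm{p.v.}\int\cdots\pm\frac{\phi}{2}$, hence $\llbracket\mathcal D_0\phi\rrbracket=\phi$. The case of general $\phi\in L^2(\partial D)$ then follows by density from the Hölder case together with the $L^2$-boundedness of the operators, the boundary values being understood in the non-tangential sense (cf. \cite{verchota1984layer}).

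Third — and this is the delicate part — the continuity of $\partial_{\bs n}\mathcal D\phi$ across $\partial D$ and its finite-part representation. Here I would use a Maue-type (Günter) identity: parametrizing $\partial D$ by arc length $s$, one has, for $\bs x\notin\partial D$,
\begin{equation*}
\frac{\partial}{\partial\bs n(\bs x)}\mathcal D\phi(\bs x)=\frac{\partial}{\partial s(\bs x)}\int_{\partial D}G_\kappa(\bs x,\bs y)\,\frac{\partial\phi}{\partial s(\bs y)}(\bs y)\,\mathrm{d}S_{\bs y}+\kappa^2\int_{\partial D}\big(\bs n(\bs x)\cdot\bs n(\bs y)\big)\,G_\kappa(\bs x,\bs y)\,\phi(\bs y)\,\mathrm{d}S_{\bs y},
\end{equation*}
up to the sign conventions of \cite{colton2013integral}; it is obtained by expressing the double-normal derivative of $G_\kappa$ through double-tangential derivatives plus the term coming from $\Delta_{\bs y}G_\kappa=-\kappa^2G_\kappa$, and then integrating by parts along the closed curve $\partial D$ (no boundary terms appear). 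On the right-hand side the first term is a tangential derivative of a single-layer potential with density $\partial_s\phi$, which is continuous across $\partial D$ by Proposition \ref{theorem1} and remains so after tangential differentiation, and the second term is $\bs n(\bs x)$ dotted with a vector-valued single-layer potential, again continuous across $\partial D$. Hence $\partial_{\bs n}\mathcal D\phi$ extends continuously and its two one-sided limits coincide; evaluating the right-hand side on $\partial D$ and moving $\partial_s$ back inside the integral produces exactly the hypersingular kernel $\partial^2 G_\kappa/\partial\bs n(\bs x)\partial\bs n(\bs y)$ interpreted as a Hadamard finite part. I expect the main obstacle to be establishing this Maue identity rigorously and justifying the passage to the finite part; for a smooth (here circular) boundary one may alternatively regularize by replacing $\partial D$ with a nearby parallel curve, pass to the limit, and invoke the $C^{1,\alpha}$ mapping properties of the layer operators as in \cite{colton2013integral}.
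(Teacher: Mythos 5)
Your proposal is essentially correct, and it should be compared not with a proof in the paper but with the paper's actual treatment: Propositions \ref{theorem1}--\ref{theorem2} are stated as classical facts of potential theory and are simply cited to \cite{verchota1984layer,colton2013integral}, with no proof given. What you have written is a faithful reconstruction of that classical argument — splitting off the Laplace kernel, using the Gauss solid-angle identity plus the H\"older-continuity trick $\phi(\bs y)-\phi(\bs x_0)$ for the jump of $\mathcal D\phi$, and a Maue/G\"unter identity to reduce $\partial_{\bs n}\mathcal D\phi$ to tangential derivatives of single-layer potentials — so in substance you are reproving exactly what the paper imports from the literature.

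Two caveats worth recording. First, your splitting $G_\kappa=G_0+R_\kappa$ with $G_0=-\frac{1}{2\pi}\log|\bs x-\bs y|$ is consistent with the standard fundamental solution $\frac{\ri}{4}H_0^{(1)}(\kappa|\bs x-\bs y|)$; the paper's \eqref{greenfunction} carries an extra minus sign, under which $R_\kappa$ would not be smooth and the stated jump signs would flip, so the signs in the Proposition implicitly use the standard convention (most likely a typo in \eqref{greenfunction}, not an error of yours). Second, the third part of your argument is only as strong as the regularity you assume: the Maue identity needs $\partial_s\phi$ to exist in an integrable sense (say $\phi\in C^{1,\alpha}(\partial D)$, as in \cite{colton2013integral}), and the continuity of the tangential derivative of the single layer across $\partial D$ is a jump-relation fact in its own right (the jump of $\nabla\mathcal S\phi$ is purely normal), not an automatic consequence of Proposition \ref{theorem1}; since the paper states the p.f.\ formula at the same informal level of generality, this gloss is acceptable, but for arbitrary $\phi\in L^2(\partial D)$ the normal-derivative statement does not hold as written and must be read in the classical (H\"older-density) sense you invoke.
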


The potential theory introduced above can be  directly used to derive boundary integral equations for multiple scattering problem in homogeneous media. However, it will lead to a very large linear system when a large number of scatterers are involved. To overcome this, we formulate the boundary integral equations based on the decomposed form derived from the superposition principle. It is known  that the scattering field $u^{\rm sc}$ of the multiple scattering problem \eqref{helmholtzeq}-\eqref{bconscatterers} in homogeneous media has the following unique decomposition (cf. \cite{Antoine2008On}):
\begin{equation}\label{superposition}
u^{\rm sc}(\bm{x})=\sum_{j=1}^{M_1} w_{1j}(\bm{x})+\sum_{j=1}^{M_2} w_{2j}(\bm{x})+\sum_{j=1}^{M_3} w_{3j}(\bm{x}),
\end{equation}
where $w_{ij}$ are the solutions of the single scattering problems
\begin{subequations}\label{localscatteringproblem}
	\begin{numcases}
	\displaystyle \Delta w_{ij} +\kappa^2 w_{ij} =0,\quad\mathrm{in}\quad \Omega_{ij}^{\infty}:=\mathbb R^2\backslash\bar{\Omega}_{ij},\label{localscatteringproblem1}\\
	\displaystyle \mathscr B_{ij} w_{ij}=g_{ij},\quad \mathrm{on}\quad \partial \Omega_{ij},\label{localscatteringproblem2}\\
	\displaystyle \frac{\partial w_{ij}}{\partial \bm{n}}-\ri kw_{ij}=o\big( r^{-\frac{1}{2}}\big),\quad \mathrm{as}\quad r:=|\bm{x}|\rightarrow \infty,\label{localscatteringproblem3}
	\end{numcases}
\end{subequations}
for  $j=1, \cdots, M_i, i=1, 2, 3$. The input data is given by
\begin{equation}\label{localboundaryvalue}
g_{ij}=-\mathscr B_{ij} u^{\rm in}-\sum_{k=1,k\neq j}^{M_i} \mathscr B_{ij} w_{ik}-\sum_{\ell=1,\ell\neq i}^{3}\sum_{k=1}^{M_{\ell}} \mathscr B_{ij} w_{\ell k}. 
\end{equation}
The last two terms in \eqref{localboundaryvalue} involve the scattering fields from all other scatterers. It is seen that due to the interaction between the scatterers, the incident wave $g_{ij}$ for the $j$-th scatterer in the $i$-th group is the combination of $u^{\rm in}$ and the scattering fields generated by all other scatterers. This shows how  the multiple scattering system is coupled.

According to the potential theory, the single and double-layer potentials
defined by \eqref{layeredpotential} satisfy the Helmholtz equation in $\mathbb{R}^2\backslash\partial D$ and the Sommerfeld radiation condition at infinity. For the uniqueness of the resulted boundary integral equations, we define the mixed potentials (cf. \cite{colton2013integral,kleefeld2012exterior}):
\begin{equation}\label{mixedpotential}
\mathscr K_{1j}\phi_{1j}:=\mathcal D_{1j}\phi_{1j}+\ri\eta \mathcal S_{1j}\phi_{1j},\quad \mathscr K_{ij}\phi_{ij}:=-\ri\eta\mathcal D_{ij}\phi_{ij} -\mathcal S_{ij}\phi_{ij}, \quad i=2, 3,
\end{equation}
for the densities $\{\phi_{1j}, \phi_{2j}, \phi_{3j}\},$ where
$$\mathcal S_{ij}\phi_{ij}:=\int_{\partial \Omega_{ij}} G_{\kappa}(\bm{x},\bm{y})\phi_{ij}(\bm{y})\mathrm{d}\bm{y},\quad \mathcal{D}_{ij}\phi_{ij}(\bm{x}):=\int_{\partial \Omega_{ij}}\frac{\partial G_{\kappa}(\bm{x},\bm{y})}{\partial \bm{n}(\bm{y})}\phi_{ij}(\bm{y})\mathrm{d}S_{\bm{y}},\quad\forall \bs x\notin\partial\Omega_{ij},$$
are single and double layer potentials on $\partial\Omega_{ij}$, and $\eta$ is a given constant satisfying $\eta\mathfrak{Re}\kappa\geq 0$.
Then the solutions of single scattering problems \eqref{localscatteringproblem1}-\eqref{localscatteringproblem3} have the form
\begin{equation}\label{integralrepsolu}
w_{ij}(\bs x)=\mathscr K_{ij}\phi_{ij}(\bs x),\quad \bs x\notin \partial\Omega_{ij},
\end{equation}
while the density functions  $\{{\phi}_{ij}\}$ satisfy boundary integral equations:
\begin{equation}\label{BIElocal}
\frac{1}{2}\phi_{ij}+\widehat{\mathscr K}_{ij}\phi_{ij}=g_{ij},\quad\bs x\in \partial\Omega_{ij}, \;\; j=1, 2, \cdots M_i, \;\;i=1, 2, 3.
\end{equation}
The boundary integral operators $\widehat{\mathscr K}_{ij}$ are defined as
\begin{eqnarray}\label{16}
\widehat{\mathscr{K}}_{1j}:=\widehat{\mathcal D}_{1j}+\ri\eta \widehat{\mathcal S}_{1j},\quad
\widehat{\mathscr{K}}_{2j}:=-\ri\eta\widehat{\mathcal D}_{2j}-\widehat{\mathcal S}_{2j}, \quad
\widehat{\mathscr{K}}_{3j}:=-\ri\eta\widehat{\mathcal D}_{3j}- \widehat{\mathcal S}_{3j}-\frac{\ri}{2}\eta h,
\end{eqnarray}
where
\begin{equation*}
\begin{split}
&\widehat{\mathcal S}_{1j}\phi_{1j}(\bs x):=\mathrm{p.v.} \int_{\partial \Omega_{1j}} G_{\kappa}(\bm{x},\bm{y})\phi_{1j}(\bm{y})\mathrm{d}S_{\bm{y}},\;  \widehat{\mathcal D}_{1j}\phi_{1j}(\bs x):=\mathrm{p.v.}\int_{\partial \Omega_{1j}}\frac{\partial G_{\kappa}(\bm{x},\bm{y})}{\partial \bm{n}(\bm{y})}\phi_{1j}(\bm{y})\mathrm{d}S_{\bm{y}},\\
&\widehat{\mathcal S}_{2j}\phi_{2j}(\bs x):=\mathrm{p.v.} \int_{\partial \Omega_{2j}} \frac{\partial G_{\kappa}(\bm{x},\bm{y})}{\partial\bs n(x)}\phi_{2j}(\bm{y})\mathrm{d}S_{\bm{y}},\; \widehat{\mathcal D}_{2j}\phi_{2j}(\bs x):=\mathrm{p.f.}\int_{\partial \Omega_{2j}}\frac{\partial^2 G_{\kappa}(\bm{x},\bm{y})}{\partial \bm{n}(\bm{x})\partial \bm{n}(\bm{y})}\phi_{2j}(\bm{y})\mathrm{d}S_{\bm{y}},\\
&\widehat{\mathcal S}_{3j}\phi_{3j}(\bs x):=h\Big(\mathrm{p.v.} \int_{\partial \Omega_{3j}} G_{\kappa}(\bm{x},\bm{y})\phi_{3j}(\bm{y})\mathrm{d}S_{\bm{y}}\Big)+\mathrm{p.v.} \int_{\partial \Omega_{3j}} \frac{\partial G_{\kappa}(\bm{x},\bm{y})}{\partial\bs n(x)}\phi_{3j}(\bm{y})\mathrm{d}S_{\bm{y}},\\
&\widehat{\mathcal D}_{3j}\phi_{3j}(\bs x):=h\Big(\mathrm{p.v.} \int_{\partial \Omega_{3j}} \frac{\partial G_{\kappa}(\bm{x},\bm{y})}{\partial \bs n(y)}\phi_{3j}(\bm{y})\mathrm{d}S_{\bm{y}}\Big)+\mathrm{p.f.}\int_{\partial \Omega_{3j}}\frac{\partial^2 G_{\kappa}(\bm{x},\bm{y})}{\partial \bm{n}(\bm{x})\partial \bm{n}(\bm{y})}\phi_{3j}(\bm{y})\mathrm{d}S_{\bm{y}}.
\end{split}
\end{equation*}
Applying the integral representations \eqref{integralrepsolu} to \eqref{localboundaryvalue} gives
\begin{equation}\label{gigs}
g_{ij}(\bs x)=-\mathscr B_{ij}u^{\rm in}(\bs x)-\sum_{k=1,k\neq j}^{M_i} \mathscr B_{ij} \mathscr K_{ik}\phi_{ik}(\bs x)-\sum_{\ell=1,\ell\neq i}^{3}\sum_{k=1}^{M_{\ell}} \mathscr B_{ij} \mathscr K_{\ell k}\phi_{\ell k}(\bs x),\;\;\forall\bs x\in\partial\Omega_{ij}.
\end{equation}
Then,  substituting \eqref{gigs}  into \eqref{BIElocal} leads to the following system of boundary integral equations, which  is uniquely solvable (cf. \cite{colton2013integral}). 
\begin{proposition}\label{integralformula} 
The boundary integral equations of the problem \eqref{localscatteringproblem}-\eqref{localboundaryvalue} take the form:
\begin{equation}\label{BIEglobal}
\frac{1}{2}\phi_{ij}+\widehat{\mathscr K}_{ij}\phi_{ij}+\sum_{k=1,k\neq j}^{M_i} \mathscr B_{ij} \mathscr K_{ik}\phi_{ik}+\sum_{\ell=1,\ell\neq i}^{3}\sum_{k=1}^{M_{\ell}} \mathscr B_{ij} \mathscr K_{\ell k}\phi_{\ell k}=-\mathscr B_{ij}u^{\rm in},
\end{equation}
for all $\bs x\in \partial\Omega_{ij},  j=1, 2, \cdots M_i, i=1, 2, 3$. The scattering field $u^{\rm sc}$ can be obtained by using \eqref{superposition} and \eqref{integralrepsolu}. 
\end{proposition}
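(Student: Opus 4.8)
The plan is to prove the proposition in two stages: first, establish that the coupled system \eqref{BIEglobal} is equivalent to the family of single-scattering problems \eqref{localscatteringproblem}--\eqref{localboundaryvalue} (hence, via the decomposition \eqref{superposition}, to the original problem \eqref{helmholtzeq}--\eqref{bconscatterers}); and second, prove that \eqref{BIEglobal} is uniquely solvable. One direction of the equivalence is the substitution already displayed in the text: inserting the expressions \eqref{gigs} for the data $g_{ij}$ into the local equations \eqref{BIElocal} produces \eqref{BIEglobal}. For the converse, given a solution $\{\phi_{ij}\}$ of \eqref{BIEglobal} I would set $w_{ij}:=\mathscr K_{ij}\phi_{ij}$; since the mixed potentials \eqref{mixedpotential} are built from the free-space fundamental solution \eqref{greenfunction}, each $w_{ij}$ satisfies \eqref{localscatteringproblem1} and \eqref{localscatteringproblem3} automatically, while the jump relations of Propositions \ref{theorem1}--\ref{theorem2} identify the first two terms $\tfrac12\phi_{ij}+\widehat{\mathscr K}_{ij}\phi_{ij}$ with the exterior trace $\mathscr B_{ij}w_{ij}$ on $\partial\Omega_{ij}$, so that \eqref{BIEglobal} becomes exactly \eqref{localscatteringproblem2} with the coupling data \eqref{localboundaryvalue}; then $u^{\rm sc}:=\sum_{i,j}w_{ij}$ solves \eqref{helmholtzeq}--\eqref{bconscatterers}.

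To prove unique solvability I would use the Fredholm alternative. Each diagonal operator $\phi\mapsto\tfrac12\phi+\widehat{\mathscr K}_{ij}\phi$ is boundedly invertible on the appropriate trace space by the classical combined-field theory for a single scatterer (cf.\ \cite{colton2013integral}); and because distinct scatterers are disjoint, the off-diagonal blocks $\mathscr B_{ij}\mathscr K_{\ell k}$ with $(\ell,k)\neq(i,j)$ have $C^\infty$ kernels (the Green's function $G_\kappa(\bs x,\bs y)$ is analytic for $\bs x\in\partial\Omega_{ij}$, $\bs y\in\partial\Omega_{\ell k}$), hence are compact. Thus \eqref{BIEglobal} is a compact perturbation of a boundedly invertible block-diagonal operator, and it suffices to prove injectivity. (Alternatively, existence of a solution can simply be read off from the well-posedness of the multiple scattering problem: decompose its solution via \eqref{superposition} (cf.\ \cite{Antoine2008On}) into single-scatterer radiating fields $w_{ij}$ solving \eqref{localscatteringproblem}, represent each by \eqref{integralrepsolu} with a density solving \eqref{BIElocal} (cf.\ \cite{colton2013integral}); these densities then satisfy \eqref{BIEglobal}.)

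For injectivity, suppose $\{\phi_{ij}\}$ solves \eqref{BIEglobal} with $u^{\rm in}=0$, and set $w_{ij}:=\mathscr K_{ij}\phi_{ij}$, $u^{\rm sc}:=\sum_{i,j}w_{ij}$. I would argue in three steps. (i) By the identification used in the equivalence, the homogeneous \eqref{BIEglobal} forces $\mathscr B_{ij}u^{\rm sc}=0$ on every $\partial\Omega_{ij}$; since $u^{\rm sc}$ is a radiating Helmholtz solution in $\mathbb R^2\setminus\bar\Omega$, uniqueness of the exterior problem gives $u^{\rm sc}\equiv 0$ in $\mathbb R^2\setminus\bar\Omega$. (ii) Each $w_{ij}$ is a radiating Helmholtz solution in $\mathbb R^2\setminus\bar\Omega_{ij}$ and $\sum_{i,j}w_{ij}$ vanishes in the common exterior; since the all-zero family is an admissible decomposition, uniqueness of the decomposition \eqref{superposition} forces $w_{ij}\equiv 0$ in $\mathbb R^2\setminus\bar\Omega_{ij}$ for every $i,j$. (iii) Hence the exterior Cauchy data of $\mathscr K_{ij}\phi_{ij}$ vanish on $\partial\Omega_{ij}$; the jump relations of Propositions \ref{theorem1}--\ref{theorem2} then express the interior trace and interior normal derivative of $\mathscr K_{ij}\phi_{ij}$ as explicit nonzero constant multiples of $\phi_{ij}$, whence $w_{ij}$ restricted to $\Omega_{ij}$ solves a homogeneous interior Helmholtz problem with an impedance boundary condition whose impedance constant has the sign fixed by $\eta\,\mathfrak{Re}\,\kappa\ge 0$; a Green's-identity (energy) argument then yields $\phi_{ij}\equiv 0$. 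Combining with the previous paragraph gives unique solvability of \eqref{BIEglobal}, and the final assertion of the proposition is then immediate, since $u^{\rm sc}$ is reconstructed from the unique density tuple through \eqref{integralrepsolu} and \eqref{superposition}.

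I expect step (iii) of the injectivity argument to be the main obstacle; it is essentially the only place where a computation, rather than a citation, is required. The jump-relation bookkeeping and the sign in the energy identity must be carried out uniformly across the Dirichlet, Neumann and Robin cases, and in the latter two $\widehat{\mathscr K}_{ij}$ contains the hypersingular operator $\widehat{\mathcal D}_{ij}$ (and, for Robin, the lower-order term $-\tfrac{\ri}{2}\eta h$), so \eqref{BIEglobal} is not ``identity plus compact'' on $L^2(\partial\Omega)$ and one must be careful to work in the right trace spaces when invoking the single-scatterer invertibility and the Fredholm alternative. A subsidiary ingredient used in step (i) is the uniqueness of the exterior multiple scattering problem itself, which follows from Rellich's lemma together with the standard sign condition on $\mathfrak{Im}\,h$ for the Robin scatterers; this is classical and I would simply cite it.
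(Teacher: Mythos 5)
Your proposal is correct in outline, but it proves considerably more than the paper does for this proposition, and by a different route. The paper's own ``proof'' consists entirely of the forward substitution you describe: insert the representations \eqref{integralrepsolu} into \eqref{localboundaryvalue} to obtain \eqref{gigs}, substitute into the single-scatterer equations \eqref{BIElocal} to arrive at \eqref{BIEglobal}, and cite \cite{colton2013integral} for unique solvability; the reconstruction claim is then read off from \eqref{superposition} and \eqref{integralrepsolu}. What you do differently is to supply the converse equivalence and a self-contained solvability proof: Fredholm alternative with an invertible block-diagonal part plus compact off-diagonal blocks (smooth kernels, since distinct scatterers are disjoint), reduced to injectivity, which you obtain from (i) uniqueness of the exterior mixed Dirichlet/Neumann/Robin problem, (ii) uniqueness of the outgoing decomposition \eqref{superposition} (an analytic-continuation fact needing only disjointness), and (iii) the classical Brakhage--Werner-type jump-relation and interior energy argument, which, as you implicitly use, requires $\eta\neq 0$ (and the sign condition $\mathfrak{Im}(\bar\kappa h)\ge 0$ for Robin scatterers) --- assumptions the paper also relies on only tacitly. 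This extra machinery is consonant with the paper, which establishes the compactness of the compositions $\mathscr B_{ij}\mathscr K_{\ell k}$ and the bounded invertibility of $\tfrac12\mathcal I+\widehat{\mathscr K}_{ij}$ later, in Theorems \ref{compactness} and \ref{boundednessinverse}, but for the GMRES convergence analysis rather than for this proposition; your caveat about function spaces is exactly the right one, since on $L^2(\partial\Omega)$ the Neumann/Robin blocks contain the hypersingular operator and one must either work in an $H^{1/2}\times H^{-1/2}$ setting or use a regularization such as the $(\widehat{\mathscr D}^0_{\ell k})^{-1}$ device of Theorem \ref{boundednessinverse} to run the Fredholm argument rigorously. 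In short, your route buys a complete justification of the phrase ``uniquely solvable'' at the cost of the classical bookkeeping in your step (iii); the paper's route buys brevity by treating those facts as known single- and multi-scatterer potential theory.
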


\subsection{Iterative method}
Recall that $w_{ij}$ and $\phi_{ij}$ are solution and density of the scattering problem \eqref{localscatteringproblem}, so we have
\begin{equation}\label{localboundaryintegraleq}
\Big(\frac{1}{2}\mathcal I+\widehat{\mathscr{K}}_{ij}\Big)\phi_{ij}=W_{ij}:=\mathscr B_{ij}w_{ij},\quad {\rm on}\;\;\partial\Omega_{ij}.
\end{equation}
According to the boundary integral equation theory (cf. \cite{colton2013integral}),  the operator $\widetilde{\mathscr K}_{ij}:=\frac{1}{2}\mathcal I+\widehat{\mathscr{K}}_{ij}$ is invertible and its inverse is  bounded.
Applying $\phi_{ij}=\widetilde{\mathscr K}_{ij}^{-1}W_{ij}$ to \eqref{BIEglobal}, we obtain
\begin{equation}\label{BIEglobalnew}
W_{ij}+\sum_{k=1,k\neq j}^{M_i} \mathscr B_{ij} \mathscr K_{ik}\widetilde{\mathscr K}_{ik}^{-1}W_{ik}+\sum_{\ell=1,\ell\neq i}^{3}\sum_{k=1}^{M_{\ell}} \mathscr B_{ij} \mathscr K_{\ell k}\widetilde{\mathscr K}_{\ell k}^{-1}W_{\ell k}=-\mathscr B_{ij}u^{\rm in}, \quad{\rm on}\;\; \partial\Omega_{ij}.
\end{equation}
Equivalently, we have 
\begin{equation}\label{conciseeq}
(\mathcal I+\mathbb K)\bs W=\bs b,
\end{equation}
where $\bs W=(W_{11}, \cdots, W_{1M_1}, W_{21}, \cdots, W_{2M_2}, W_{31}, \cdots, W_{3M_3})^{\rm T}$, and
\begin{equation}\label{matrixform}
\mathbb K=
\begin{pmatrix}
\mathcal O & \mathscr B_{11} & \cdots & \mathscr B_{11}\\
\mathscr B_{12} & \mathcal O & \cdots & \mathscr B_{12}\\
\vdots  & \vdots & \ddots & \vdots\\
\mathscr B_{3M_3} & \mathscr B_{3M_3} &\cdots & \mathcal O \\
\end{pmatrix}\begin{pmatrix}
\mathscr K_{11}\widetilde{\mathscr K}_{11}^{-1}  & \mathcal O &\cdots & \mathcal O\\
\mathcal O  & \mathscr K_{12}\widetilde{\mathscr K}_{12}^{-1} &\cdots & \mathcal O\\
\vdots  & \vdots & \ddots  & \vdots\\
\mathcal O & \mathcal O&\cdots  & \mathscr K_{3M_3}\widetilde{\mathscr K}_{3M_3}^{-1}
\end{pmatrix}.
\end{equation}
Let $\mathscr{S}_{ij}: C(\partial \Omega_{ij})\mapsto C^2(\mathbb{R}^2\setminus \overline{\Omega}_{ij})$ be the solution operator of the single scattering problem
\begin{equation}\label{localgeneral}
\begin{cases}
\displaystyle \Delta v+\kappa^2 v=0,\quad  & \bm{x}\in \mathbb{R}^2\setminus \overline{\Omega}_{ij},\\
\displaystyle \mathscr B_{ij}v=\psi,\quad & \bm{x} \in \partial \Omega_{ij},\\
\displaystyle \frac{\partial v}{\partial r}-{\ri}\kappa v=o\big(r^{-\frac{1}{2}}\big),\quad & \mathrm{as}\;\; r:=|\bm{x}|\rightarrow \infty.
\end{cases}
\end{equation}
Then we have 
\begin{equation}\label{equiveqn}
\mathscr K_{ij}\widetilde{\mathscr K}_{ij}^{-1}=\mathscr S_{ij},
\end{equation}
and $\mathscr K_{ij}\widetilde{\mathscr K}_{ij}^{-1}W_{ij}$ for any given data $W_{ij}$ can be obtained by solving single scattering problem \eqref{localgeneral} with $\psi=W_{ij}$.

Different from the classic boundary integral method which usually solves \eqref{BIEglobal} for density, we solve the integral equation \eqref{BIEglobalnew} by the iterative method. Many iterative approaches (e.g., Gauss-Seidel or  generalized minimal residue) can be employed. For the sake of convergence analysis, we choose  the generalized minimal residual (GMRES cf. \cite{saad1986gmres}) iterative method (see {\bf Algorithm 1} below). 

As we have seen in the proposed iterative algorithm, the key part in GMRES iteration is the computation of the terms $\mathscr K_{ij}\widetilde{\mathscr K}_{ij}^{-1}W_{ij}^{(k)}$ in $(\mathcal I+\mathbb K)\bs W^{(k)},$ where $W_{ij}^{(k)}$ is  given boundary data. This can be done by first solving  the boundary integral equations \eqref{localboundaryintegraleq},  and then calculating the integrals for  $\mathscr K_{ij}$ (cf. \eqref{mixedpotential}). Here, we will apply the spectral element solver with NRBC truncation for the single scattering problems in \eqref{localgeneral}.
We first  truncate the unbounded computational domain $\mathbb R^2\setminus\Omega_{ij}$ by using a circular artificial  boundary namely $\Gamma^{ij}$ centered at $\bs c_{ij}=(x^c_{ij}, y^c_{ij})$ with radius $R_{ij}$ (see Fig. \ref{dtntruncatiion}). Denote by $B_{ij}$  the domain enclosed by $\Gamma^{ij}$.
Then the scattering problem \eqref{localgeneral} can be reduced to the following  boundary value 
 \begin{algorithm}\label{algorithm1}
 	\caption{Iterative algorithm for multiple scattering in homogeneous media}
 	\begin{algorithmic}
 		\State  \underline{\em Initialisation}
 		\smallskip
 		\State  (i) Given $\bs W^{(0)}$ on the boundary of $\{\Omega_{ij}\}$, stopping threshold:  $\varepsilon$ and the maximum number of iterations:  $n_{max}$;
 		\smallskip
 		\State (ii)  Solve \eqref{localgeneral} with $\psi=W_{ij}^{(0)}$ for $w_{ij}^{(0)}$,  $i=1,2, 3, j=1, 2, \cdots,  M_i$;
 		\smallskip
 		\State (iii) From  $\mathscr K_{ij}\widetilde{\mathscr K}_{ij}^{-1}W_{ij}^{(0)}=w_{ij}^{(0)}$, we compute $\bs r^{(0)}=\bs b-(\mathcal I+\mathbb K){\bm W}^{(0)}$, and $\bs v^{(1)}=\bs r^{(0)}/\|\bs r^{(0)}\|$.
 		\smallskip
 		\State  \underline{\em Iterative steps}
 		\smallskip
 		\For{$n=1,2,\cdots, n_{max}$}
 		\For{$k=2 \to n$}
 		\State Solve \eqref{localgeneral} with $\psi=v_{ij}^{(k-1)}$ for $\tilde v_{ij}$, $i=1,2, 3, j=1, 2, \cdots,  M_i$;
 		\State From  $\mathscr K_{ij}\widetilde{\mathscr K}_{ij}^{-1} v^{(k-1)}_{ij}=\tilde v_{ij}$, we compute $\bs v^{(k)}=(\mathcal I+\mathbb K)\bs v^{(k-1)}$;
 		\For{$m=1 \to k-1$}
 		\begin{equation}
 		h_{m,k-1}=(\bs v^{(m)}, \bs v^{(k)});\quad \bs v^{(k)}=\bs v^{(k)}-h_{m,k-1}\bs v^{(m)};
 		\end{equation}
 		\EndFor
 		\State $h_{k,k-1}=\|\bs v^{(k)}\|$, $\bs v^{(k)}=\bs v^{(k)}/h_{k,k-1}$;
 		\EndFor
 		\State Compute $W^{(n)}=W^0+\sum_{i=1}^{n-1}y_i\bs v_i,$ where $\bs y=(y_1, y_2, \cdots, y_{n-1})^{\rm T}$ minimizes
 		\begin{equation}
 		J(\bs y):=\|(\|\bs r_0\| \bs e_1)-\bar{H}_{n-1}\bs y\|.
 		\end{equation}
 		\If{$J(\bs y)\leq \varepsilon$}
 		\State  Stop iteration.
 		\EndIf
 		\EndFor
 		\smallskip
 		\State  \underline{\em Final step}
 		\smallskip
 		\State Solve \eqref{localgeneral} with $\psi=W_{ij}^{(n)}$ for $w_{ij}^{(n)}$, $j=1, 2, \cdots,  M_i$, $i=1,2, 3$.
 	\end{algorithmic}
 \end{algorithm}
 \begin{figure}[ht!]
 	\centering
 	\includegraphics[scale=0.55]{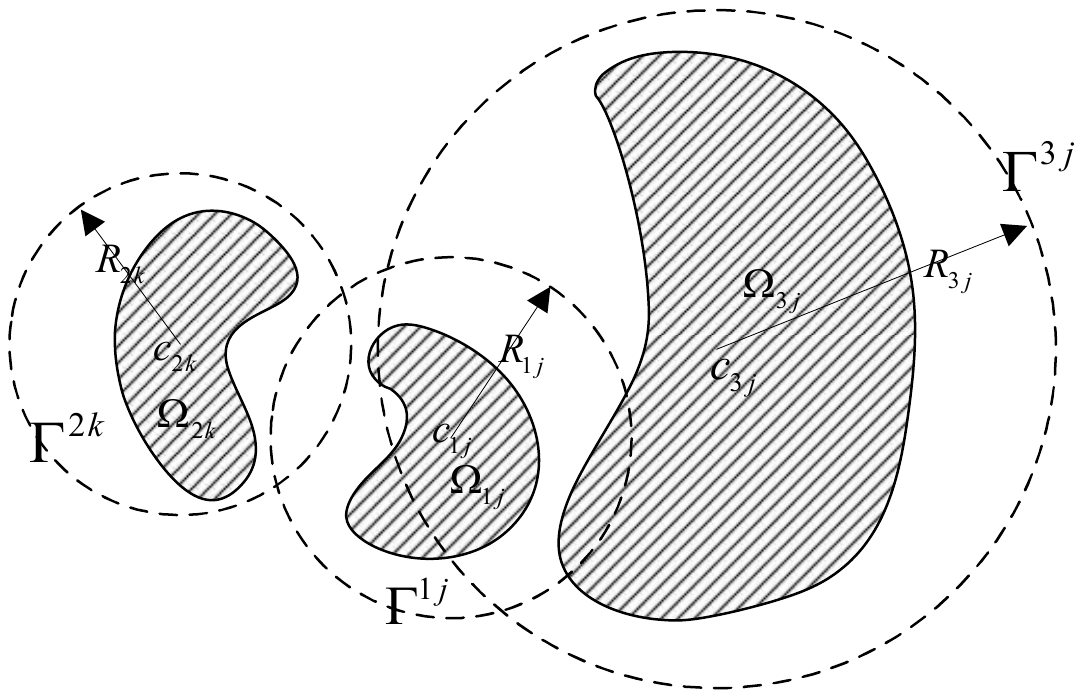}\quad
 	\includegraphics[scale=0.45]{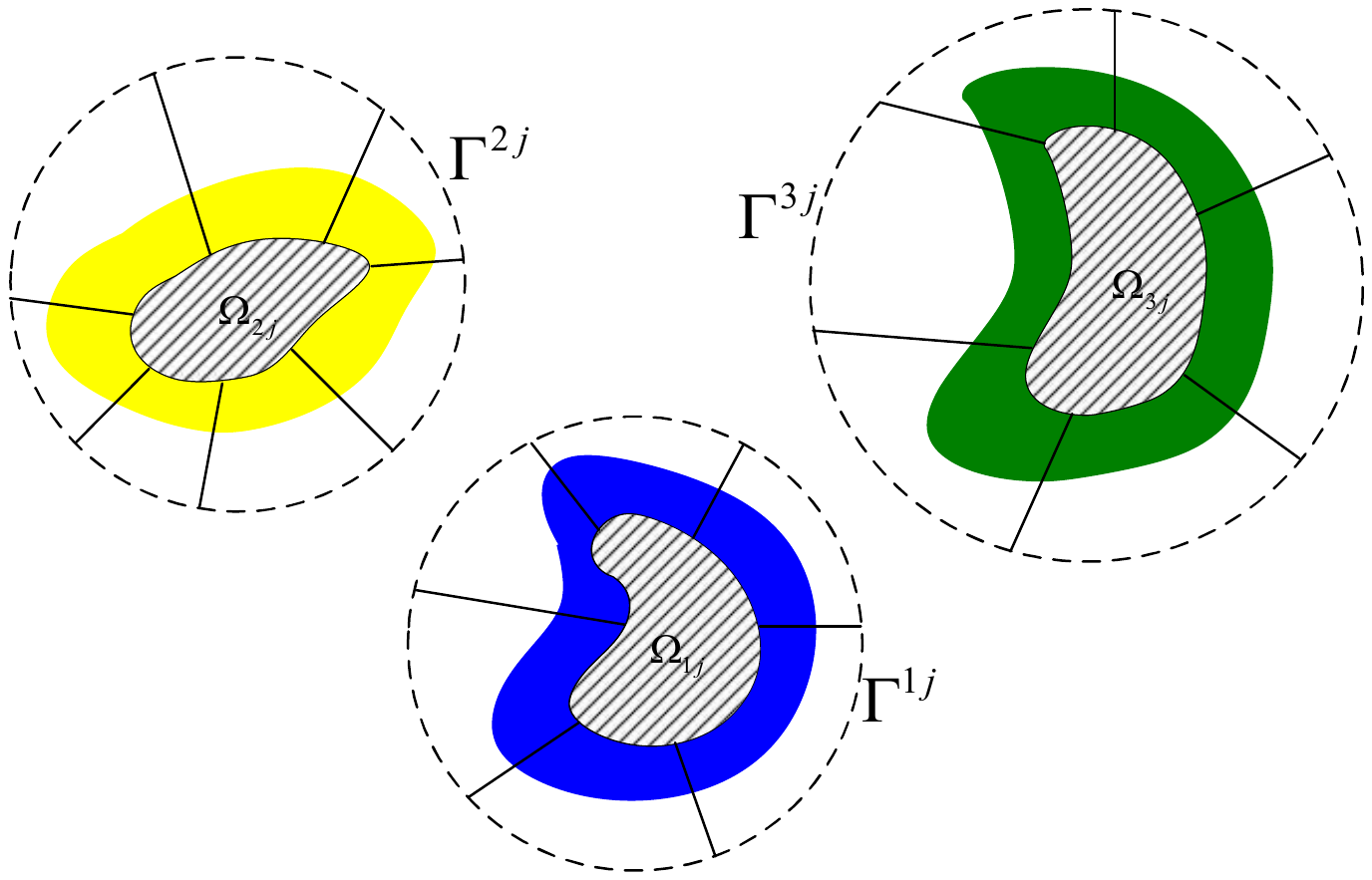}
 	\caption{Left: Intersecting artificial boundaries. Right: Well-separated artificial boundaries and spectral element mesh  for inhomogeneous media problem.}
 	\label{dtntruncatiion}
 \end{figure}
problems (BVPs):
\begin{equation}\label{boundedproblem}
\begin{cases}
\displaystyle \Delta v+\kappa^2 v = 0,\quad &\bm{x}\in B_{ij}\backslash\Omega_{ij},\\
\displaystyle \mathscr B_{ij}v=\psi\quad &\bm{x}\in \partial\Omega_{ij},\\
\displaystyle \partial_{\bs n} v=\mathscr T_{ij}v,\quad &\bm{x}\in \Gamma^{ij},
\end{cases}
\end{equation}
where the DtN operators in the NRBC are given by
\begin{equation}\label{DtNmapping}
\mathscr T_{ij}v:=\sum_{n=-\infty}^{\infty}\kappa\frac{H_n^{(1)'}(\kappa R_{ij})}{H^{(1)}_n(\kappa R_{ij})}\widehat{v}_ne^{\ri n\theta_{ij}}.
\end{equation}
Here, $(r_{ij}, \theta_{ij})$ are the polar coordinates of $\bs x-\bs c_{ij}$, $\{H_n^{(1)}(z)\}$ are the Hankel functions of the first kind,  and
\begin{equation}
\widehat{v}_n=\frac{1}{2\pi}\int_0^{2\pi}
v(x^c_{ij}+R_{ij}\cos\theta_{ij}, y^c_{ij}+R_{ij}\sin\theta_{ij})e^{\ri n\theta_{ij}}\mathrm{d}\theta_{ij},
\end{equation}
are the Fourier coefficients of $v$ on the artificial boundary $\Gamma^{ij}$.
The scattering fields outside the truncated domains will be calculated from the data along the artificial boundaries by using  the separation of variable method. More details on the discretization will be provided  in the next section. 
\begin{rem}\label{remark1}
	Artificial boundaries  in other forms (e.g., ellipse) can also be used in \eqref{boundedproblem} for better adaptation to the shapes of the scatterers. Different from the well separated artificial boundaries required by the DtN boundary condition proposed in \cite{grote2004dirichlet}, the artificial boundaries $\Gamma^{ij}$ used here are independent of each other. They are used individually in the truncation of each single scattering problem \eqref{localgeneral}. Therefore overlapped artificial boundaries can be used as shown in Fig. \ref{dtntruncatiion} (Left). This can relax the assumption of the well separateness of the scatterers.  
\end{rem}

Although the essential unknowns are the boundary data $W_{ij}$ on $\partial\Omega_{ij}$, the purely outgoing components $w_{ij}$ in the exterior domains $\mathbb R^2\setminus\overline{\Omega}_{ij}$ will be calculated in the iterations.  
According to the algorithm, only $M$ single scattering problems need to be solved individually at each iteration. Since the system \eqref{BIEglobalnew} are an equivalent form of the boundary integral system \eqref{BIEglobal}, it enjoys the nice property of relatively small condition number. Consequently, the proposed iterative method converges within a small number of iterations. It will be validated by the numerical  examples in section \ref{sect5} that the number of iterations is nearly independent of the mesh size and polynomial degree used in the discretisation.


\section{Iterative method for the multiple scattering in locally  inhomogeneous media}\label{sect3}
In this section, we present the iterative method for multiple scattering problem in locally inhomogeneous media. In general, purely outgoing wave decomposition is not available when inhomogeneous medium is involved. Nevertheless, it is usually reasonable to assume that the inhomogeneity of the medium is confined  in a finite domain \cite{grote2004dirichlet}.

\subsection{Integral equations on the artificial boundaries}
Assuming  that all scatterers are well separated,  we can surround them by $M$ non-intersecting circles $\{\Gamma^{ij}\}_{j=1}^{M_i}, i=1, 2, 3$ centered at $\{\bs c_{ij}=(x_{ij}^c, y_{ij}^c)\}$ with radii $R_{ij}$ (see Fig. \ref{dtntruncatiion}). Denote by $B_{ij}$ the domain enclosed by $\Gamma^{ij}$, $B_i=\cup_{j=1}^{M_i}B_{ij}$, $B=B_1\cup B_{2}\cup B_{3}$. We further assume that $1-n(\bs x)$ vanishes outside the finite region $B$, i.e., the inhomogeneity is confined  inside $B$ (see Fig. \ref{dtntruncatiion} (right)). A medium that satisfies this assumption is called a locally inhomogeneous medium. Therefore, we only have homogeneous medium outside the region $B,$ and the scattering field $u^{\rm sc}=u-u^{\rm in}$ outside $B$ has a unique decomposition (cf. \cite{Antoine2008On}):
\begin{equation}\label{superpositionoutside}
u^{\rm sc}(\bm{x})=\sum_{j=1}^{M_1} w_{1j}(\bm{x})+\sum_{j=1}^{M_2} w_{2j}(\bm{x})+\sum_{j=1}^{M_3} w_{3j}(\bm{x}), \quad \bs x\in\mathbb R^2\setminus \bar B,
\end{equation}
where $\{w_{ij}\}_{j=1}^{M_i}$ are the solutions of scattering problems:
\begin{equation}\label{localscatteringproblemoutside}
	\begin{cases}
	\displaystyle \Delta w_{ij} +\kappa^2 w_{ij} =0,\quad & \mathrm{in}\quad\mathbb R^2\backslash \bar B_{ij},\\[3pt]
	\displaystyle w_{ij}=g_{ij},\quad & \mathrm{on}\quad \Gamma^{ij},\\[4pt]
	\displaystyle \frac{\partial w_{ij}}{\partial r}-\ri \kappa w_{ij}=o\big(r^{-\frac{1}{2}}\big),\quad  &\mathrm{as}\quad r:=|\bm{x}|\rightarrow \infty, 
	\end{cases}
\end{equation}
for  $j=1, \cdots, M_i,$ $i=1, 2, 3$. The boundary data $g_{ij}$ is
\begin{equation}
g_{ij}=u- u^{\rm in}-\sum_{k=1,k\neq j}^{M_i} w_{ik}-\sum_{\ell=1,\ell\neq i}^{3}\sum_{k=1}^{M_{\ell}}  w_{\ell k},\quad \mathrm{on}\quad \Gamma^{ij}.\label{localboundaryvalueoutside}
\end{equation}
It is worthy to  point out that we have used the total field $u$ to determine $g_{ij}$.

\begin{rem}
	In the model problem, the inhomogeneity of the medium is assumed to be in the neighbourhood of each scatterer and well-separated. If the inhomogeneity around some scatterers is not well-separated, these scatterers should be treated as a group  surrounded by a relatively larger artificial boundary. 
\end{rem}

Define the  mixed potentials ${\mathscr K}'_{ij}\phi_{ij}=\mathcal D'_{ij}\phi_{ij}+\ri\eta \mathcal S'_{ij}\phi_{ij}$
with the  densities $\phi_{ij}$, where $\mathcal S'_{ij}\phi_{ij}$, $\mathcal D'_{ij}\phi_{ij}$ are single and double layer potentials on $\Gamma^{ij}$, and $\eta$ is a given constant such that $\eta\mathfrak{Re}\kappa\geq 0$.
According to the boundary integral  theory, the solutions of the local scattering problems \eqref{localscatteringproblemoutside} have the following integral representations:
\begin{equation}\label{integralrepsoluoutside}
w_{ij}(\bs x)=\mathscr K'_{ij}\phi_{ij}(\bs x), \quad \forall\bs x\in\mathbb R^2\setminus\bar {B}_{ij},
\end{equation}
where the density functions  $\{{\phi}_{ij}\}$ satisfy the  boundary integral equations
\begin{equation}\label{BIElocaloutside}
\widehat{\mathscr K}'_{ij}\phi_{ij}+\frac{1}{2}\phi_{ij}=g_{ij},\quad\bs x\in \Gamma^{ij}, \;\;\;  j=1, 2, \cdots,  M_i, \;\;i=1, 2, 3. 
\end{equation}
Here, the boundary integral operators $\widehat{\mathscr K}'_{ij}$ are defined as
\begin{equation}\label{integralopnohom}
\widehat{\mathscr K}'_{ij}\phi_{ij}=\ri\eta\, \mathrm{p.v.} \int_{\Gamma^{ij}} G_{\kappa}(\bm{x},\bm{y})\phi_{ij}(\bm{y})\,\mathrm{d}S_{\bm{y}}+\mathrm{p.v.}\int_{\Gamma^{ij}}\frac{\partial G_{\kappa}(\bm{x},\bm{y})}{\partial \bm{n}(\bm{y})}\phi_{ij}(\bm{y})\mathrm{d}S_{\bm{y}}.
\end{equation}
Boundary integral equations \eqref{BIElocaloutside} are derived by applying the Dirichlet boundary conditions in  \eqref{localscatteringproblemoutside} 
and limiting properties given in Theorem \ref{theorem1} and Theorem \ref{theorem2}. Thus, inserting  \eqref{integralrepsoluoutside} into \eqref{localboundaryvalueoutside}, we obtain
\begin{equation}
g_{ij}=u-u^{\rm in}-\sum_{k=1,k\neq j}^{M_i} \mathscr K'_{ik}\phi_{ik}-\sum_{\ell=1,\ell\neq i}^{3}\sum_{k=1}^{M_{\ell}} \mathscr K'_{\ell k}\phi_{\ell k},\quad{\rm on}\;\;\Gamma^{ij},
\end{equation}
A substitution of the above equations in \eqref{BIElocaloutside} gives the following system of integral equations
\begin{equation}\label{BIEglobaloutside}
\frac{1}{2}\phi_{ij}+\widehat{\mathscr K}'_{ij}\phi_{ij}+\sum_{k=1,k\neq j}^{M_i} \mathscr K'_{ik}\phi_{ik}+\sum_{\ell=1,\ell\neq i}^{3}\sum_{k=1}^{M_{\ell}} \mathscr K'_{\ell k}\phi_{\ell k}-u=-u^{\rm in}, \quad{\rm on}\;\;\Gamma^{ij},
\end{equation}
for $ j=1, 2, \cdots M_i,  i=1, 2, 3.$

\subsection{Iterative method}
Note that the equations in \eqref{BIEglobaloutside} involve the values of the total field $u$ confined on the artificial boundaries $\Gamma^{ij}$. Nevertheless, they can be determined by the densities $\{\phi_{ij}\}_{j=1}^{M_i}, i=1, 2, 3$ via solving the boundary value problems in $B_{ij}\setminus \overline{\Omega}_{ij}$, respectively. For notational convenience, let 
\begin{equation}\label{solvingopinterior}
\mathscr{S}'_{ij}: H^{-\frac{1}{2}}(\Gamma_{ij})\mapsto H^1(B_{ij}\setminus \overline{\Omega}_{ij}),
\end{equation}
be the solution operator of the inhomogeneous interior problem
\begin{equation}\label{localgeneralinterior}
\begin{cases}
\displaystyle \Delta v(\bs x)+\kappa^2n(\bs x) v(\bs x)=0,\quad & \bm{x}\in B_{ij}\setminus \overline{\Omega}_{ij},\\[2pt]
\displaystyle \mathscr B_{ij}v=0,\quad & \bm{x} \in \partial \Omega_{ij},\\[2pt]
\displaystyle \mathscr{T}_{ij}^{\prime}v=\mathscr{T}_{ij}^{\prime}u^{\rm in}+\psi,\quad & \bs x\in \Gamma^{ij},
\end{cases}
\end{equation}
where $\mathscr{T}_{ij}^{\prime}:=\frac{\partial}{\partial \bs n}-\mathscr{T}_{ij}$, and $\mathscr{T}_{ij}$ is the DtN operator defined in \eqref{DtNmapping}.
 Recall the decomposition \eqref{superpositionoutside}, the total field on $\Gamma^{ij}$ has the decomposition: 
$$u(\bs x)=u^{\rm in}(\bs x)+\sum\limits_{\ell=1}^3\sum\limits_{k=1}^{M_{\ell}}w_{\ell k}(\bs x),\quad \bs x\in\Gamma^{ij}.$$
Moreover, the purely outgoing wave $w_{ij}$ satisfies the boundary condition $\mathscr{T}_{ij}^{\prime}w_{ij}(x)=0$ for all $\bs x\in \Gamma^{ij}$. Then, we have
\begin{equation}
\mathscr{T}_{ij}^{\prime}u=\mathscr{T}_{ij}^{\prime}\Big(u^{\rm in}+\sum_{k=1,k\neq j}^{M_i} w_{ik}+\sum_{\ell=1,\ell\neq i}^{3}\sum_{k=1}^{M_{\ell}} w_{\ell k}\Big),
\end{equation}
which implies that $u$ in the domain $\bar B_{ij}\setminus \Omega_{ij}$ is the solution of \eqref{localgeneralinterior} with boundary data
$$\psi=\mathscr{T}_{ij}^{\prime}\Big(\sum_{k=1,k\neq j}^{M_i} w_{ik}+\sum_{\ell=1,\ell\neq i}^{3}\sum_{k=1}^{M_{\ell}} w_{\ell k}\Big).$$
The following classic conclusion states the well-posedness of the BVP \eqref{localgeneralinterior} (cf. \cite{melenk2010convergence}).
\begin{theorem}\label{interiorwellposed}
Let $\Omega_{ij}$ be a Lipchitz domain, $n(\bs x)\in L^{\infty}(B_{ij}\setminus \overline{\Omega}_{ij})$, $\psi\in H^{-\frac{1}{2}}(\Gamma^{ij})$. Then \eqref{localgeneralinterior} has a unique weak solution in $H^1(B_{ij}\setminus \overline{\Omega}_{ij})$ such that
	\begin{equation}
	\|v\|_{H^1(B_{ij}\setminus \overline{\Omega}_{ij})}\leq C\|\psi\|_{H^{-\frac{1}{2}}(\Gamma^{ij})}.
	\end{equation}
\end{theorem}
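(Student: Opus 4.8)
The plan is to establish Theorem~\ref{interiorwellposed} by the standard variational argument for the Helmholtz equation in a bounded domain with a DtN-type (NRBC) condition, combined with a Fredholm alternative. First I would write down the weak formulation: multiply $\Delta v+\kappa^2 n v=0$ by a test function $\bar w\in H^1(B_{ij}\setminus\overline{\Omega}_{ij})$ (with $\mathscr B_{ij}w=0$ on $\partial\Omega_{ij}$ in the Dirichlet and Robin cases, encoded in the trial/test space), integrate by parts, and substitute $\partial_{\bs n}v=\mathscr T_{ij}v+\mathscr T'_{ij}u^{\rm in}+\psi$ on $\Gamma^{ij}$. This yields a sesquilinear form $a(v,w)=\int (\nabla v\cdot\nabla\bar w-\kappa^2 n v\bar w)-\langle \mathscr T_{ij}v,w\rangle_{\Gamma^{ij}}$ (plus the Robin boundary term $\int_{\partial\Omega_{3j}} h v\bar w$ when relevant) and a bounded antilinear functional $F(w)=\langle\mathscr T_{ij}u^{\rm in}+\psi,w\rangle_{\Gamma^{ij}}$, whose norm is controlled by $\|\psi\|_{H^{-1/2}(\Gamma^{ij})}$ (the $u^{\rm in}$ contribution being fixed data). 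The trace theorem and the mapping property $\mathscr T_{ij}:H^{1/2}(\Gamma^{ij})\to H^{-1/2}(\Gamma^{ij})$ give boundedness of $a(\cdot,\cdot)$ on $H^1$.

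Second I would split off the coercive part: write $a(v,w)=a_1(v,w)-a_2(v,w)$ where $a_1$ contains $\int\nabla v\cdot\nabla\bar w+\int v\bar w$ minus the part of the DtN operator with the ``right sign'', and $a_2$ collects the remaining lower-order terms. The key structural fact, already used implicitly via the NRBC construction, is that $-\mathrm{Re}\,\langle\mathscr T_{ij}v,v\rangle_{\Gamma^{ij}}\ge 0$ (equivalently, $\mathrm{Re}\big(\kappa H_n^{(1)\prime}(\kappa R)/H_n^{(1)}(\kappa R)\big)\le 0$ for real $\kappa$), so the DtN term is dissipative and does not destroy Gårding coercivity; one obtains $\mathrm{Re}\,a(v,v)\ge \alpha\|v\|_{H^1}^2-C\|v\|_{L^2}^2$. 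Since $B_{ij}\setminus\overline{\Omega}_{ij}$ is bounded, the embedding $H^1\hookrightarrow L^2$ is compact, so the $-C\|v\|_{L^2}^2$ term and the compact part of $\mathscr T_{ij}$ (its smoothing tail) correspond to a compact perturbation; by the Fredholm alternative, existence of a solution for every $\psi$ follows once uniqueness ($\psi=0\Rightarrow v=0$) is shown, and the norm bound $\|v\|_{H^1}\le C\|\psi\|_{H^{-1/2}}$ then comes from the open mapping theorem (or directly from Fredholm estimates).

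Third I would prove uniqueness: if $\psi=0$ and $u^{\rm in}$ is dropped (uniqueness is about the homogeneous problem), then $v$ extends by the exterior radiating solution it induces through the NRBC to a global solution of $\Delta v+\kappa^2 v=0$ in $\mathbb R^2\setminus\overline{\Omega}_{ij}$ (homogeneous outside $B_{ij}$) satisfying $\mathscr B_{ij}v=0$ on $\partial\Omega_{ij}$ and the Sommerfeld condition; by the classical uniqueness theorem for the exterior problem (Rellich's lemma / unique continuation, with $\mathrm{Im}\,h\ge0$ or $h>0$ in the Robin case and $n$ real as needed) this forces $v\equiv0$. Alternatively one argues directly on the truncated domain: taking imaginary parts in $a(v,v)=0$ and using $-\mathrm{Im}\,\langle\mathscr T_{ij}v,v\rangle_{\Gamma^{ij}}>0$ for $v|_{\Gamma^{ij}}\ne0$ forces $v|_{\Gamma^{ij}}=0$ and $\partial_{\bs n}v|_{\Gamma^{ij}}=\mathscr T_{ij}v=0$, whence Holmgren/unique continuation gives $v\equiv0$ in $B_{ij}\setminus\overline{\Omega}_{ij}$. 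The main obstacle is handling the DtN operator $\mathscr T_{ij}$ rigorously: one must verify its boundedness $H^{1/2}\to H^{-1/2}$, its sign/dissipativity on $\Gamma^{ij}$, and isolate its compact (smoothing) component so that the Fredholm framework applies — this is where I would invoke the cited convergence analysis \cite{melenk2010convergence} rather than reprove the Hankel-quotient asymptotics from scratch; the rest (coercivity splitting, compact embedding, Fredholm alternative) is routine.
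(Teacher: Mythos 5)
The paper does not actually prove Theorem \ref{interiorwellposed}: it is quoted as a ``classic conclusion'' with a pointer to \cite{melenk2010convergence}, so there is no in-paper argument to compare against. Your sketch is essentially the standard proof that the citation rests on, and it is sound: the weak formulation with the DtN term, boundedness of $\mathscr T_{ij}\colon H^{1/2}(\Gamma^{ij})\to H^{-1/2}(\Gamma^{ij})$, a G\aa rding inequality using ${\rm Re}\,\big(\kappa H_n^{(1)\prime}(\kappa R_{ij})/H_n^{(1)}(\kappa R_{ij})\big)\le 0$, compactness of $H^1\hookrightarrow L^2$ on the bounded annular region, the Fredholm alternative, and uniqueness via the sign-definiteness of the boundary term plus unique continuation (or, equivalently, extension to a radiating exterior solution and Rellich). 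Three small remarks. First, your sign claim $-\,{\rm Im}\,\langle\mathscr T_{ij}v,v\rangle_{\Gamma^{ij}}>0$ is reversed: by the Wronskian identity ${\rm Im}\,\big(H_n^{(1)\prime}(z)/H_n^{(1)}(z)\big)=\tfrac{2}{\pi z}|H_n^{(1)}(z)|^{-2}>0$, one has ${\rm Im}\,\langle\mathscr T_{ij}v,v\rangle_{\Gamma^{ij}}>0$ unless $v|_{\Gamma^{ij}}=0$; only the sign-definiteness is used, so the argument survives, but the inequality should be stated correctly. Second, for the G\aa rding step you do not need to isolate a ``compact smoothing tail'' of $\mathscr T_{ij}$ at all: since ${\rm Re}\,\langle\mathscr T_{ij}v,v\rangle\le 0$ termwise, the form $(\nabla v,\nabla w)+(v,w)-\langle\mathscr T_{ij}v,w\rangle$ is already coercive and the only compact perturbation is the $L^2$ term $-(1+\kappa^2 n)(v,w)$. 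Third, you are right that the uniqueness step needs more than the stated hypotheses $n\in L^\infty$ (one needs, e.g., $n$ real or ${\rm Im}\,n\ge 0$, and the sign condition ${\rm Im}(\bar\kappa h)\ge 0$ in the Robin case), and that the estimate $\|v\|_{H^1}\le C\|\psi\|_{H^{-1/2}}$ tacitly treats the fixed term $\mathscr T_{ij}^{\prime}u^{\rm in}$ as part of the data; these are looseness issues in the paper's statement rather than gaps in your argument.
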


By using the  solution operator $\mathscr{S}'_{ij}$ and the  representation \eqref{integralrepsoluoutside}, the total field $u$ in $\bar B_{ij}\setminus \Omega_{ij}$ can be represented as
\begin{equation}
u(\bs x)=\mathscr S'_{ij}\mathscr{T}_{ij}^{\prime} \Big(\sum_{k=1,k\neq j}^{M_i} \mathscr K'_{ik}\phi_{ik}+\sum_{\ell=1,\ell\neq i}^{3}\sum_{k=1}^{M_{\ell}} \mathscr K'_{\ell k}\phi_{\ell k}\Big),\quad \bs x\in \bar B_{ij}\setminus \Omega_{ij}.
\end{equation}
Substituting it into \eqref{BIEglobaloutside}, we obtain
\begin{equation}\label{BIEglobaloutside1}
\frac{1}{2}\phi_{ij}+\widehat{\mathscr K}'_{ij}\phi_{ij}+(\mathcal I-\mathscr S'_{ij}\mathscr{T}_{ij}^{\prime})\Big(\sum_{k=1,k\neq j}^{M_i} \mathscr K'_{ik}\phi_{ik}+\sum_{\ell=1,\ell\neq i}^{3}\sum_{k=1}^{M_{\ell}} \mathscr K'_{\ell k}\phi_{\ell k}\Big)=-u^{\rm in},\quad{\rm on}\;\;\Gamma^{ij}.
\end{equation}
Recall that $w_{ij}$ and $\phi_{ij}$ are the scattering fields and the corresponding densities of the single scattering problems \eqref{localscatteringproblemoutside}. Then
\begin{equation}
\Big(\frac{1}{2}\mathcal I+\widehat{\mathscr{K}}^{\prime}_{ij}\Big)\phi_{ij}=W_{ij}:=w_{ij}\big|_{\Gamma^{ij}}.
\end{equation}
Again the boundary integral equation theory (cf. \cite{colton2013integral})  implies that the operator $\widetilde{\mathscr K}_{ij}^{\prime}:=\frac{1}{2}\mathcal I+\widehat{\mathscr{K}}_{ij}^{\prime}$ is invertible and its inverse is a bounded linear operator.
Applying $\phi_{ij}=\widetilde{\mathscr K}_{ij}^{\prime -1}W_{ij}$ to \eqref{BIEglobaloutside1}, we obtain
\begin{equation}\label{newintegralformoutside}
W_{ij}+(\mathcal I-\mathscr S^{\prime}_{ij}\mathscr{T}_{ij}^{\prime})\Big(\sum_{k=1,k\neq j}^{M_i} \mathscr K'_{i k}\widetilde{\mathscr K}_{ik}^{\prime -1}W_{ik}+\sum_{\ell=1,\ell\neq i}^{3}\sum_{k=1}^{M_{\ell}}\mathscr K'_{\ell k} \widetilde{\mathscr K}_{\ell k}^{\prime -1}W_{\ell k}\Big)=-u^{\rm in},\;{\rm on}\; \Gamma^{ij}
\end{equation}
for $ j=1, 2, \cdots M_i$ and $i=1, 2, 3$. More concisely, \eqref{newintegralformoutside} can be written as
\begin{equation}\label{conciseeqheter}
(\mathcal I+\mathbb K^{\prime}-\mathbb S^{\prime}\mathbb K^{\prime})\bs W=\bs b,
\end{equation}
where $\bs W=(W_{11}, \cdots, W_{1M_1}, W_{21}, \cdots, W_{2M_2}, W_{31}, \cdots, W_{3M_3})^{\rm T}$, and
\begin{equation*}
\begin{split}
\mathbb{S}^{\prime}&=
\begin{pmatrix}
\mathscr S^{\prime}_{11}\mathscr{T}_{11}^{\prime}   &\cdots & \mathcal O\\
\vdots  & \ddots  & \vdots\\
\mathcal O &\cdots  & \mathscr S^{\prime}_{3M_3}\mathscr{T}_{3M_3}^{\prime}
\end{pmatrix},
\mathbb K^{\prime}=\begin{pmatrix}
\mathcal O & \mathscr K'_{12}\widetilde{\mathscr K}_{12}^{\prime -1} & \cdots & \mathscr K'_{3 M_3}\widetilde{\mathscr K}_{3M_3}^{\prime -1}\\
\mathscr K'_{11}\widetilde{\mathscr K}_{11}^{\prime -1} & \mathcal O & \cdots & \mathscr K'_{3M_3}\widetilde{\mathscr K}_{3M_3}^{\prime -1}\\
\vdots  & \vdots & \ddots & \vdots\\
\mathscr K'_{11}\widetilde{\mathscr K}_{11}^{\prime -1} & \mathscr K'_{1 2}\widetilde{\mathscr K}_{12}^{\prime -1} &\cdots & \mathcal O \\
\end{pmatrix}.
\end{split}
\end{equation*}
As in the case of homogeneous media, we apply the GMRES iterative method to solve the system \eqref{conciseeqheter}. We refer to  {\bf Algorithm 2} for a summary of the algorithm.
\begin{algorithm}\label{algorithm2}
	\caption{Iterative algorithm for multiple scattering in locally inhomogeneous media}
	\begin{algorithmic}
		\State  \underline{\em Initialisation}
		\State (i) Given $\bs W^{(0)}$ on $\{\Gamma^{ij}\}$, stop residue $\varepsilon$ and maximum iteration steps $n_{max}$.
		\State (ii) Solve \eqref{localscatteringproblemoutside} with $g_{ij}=W_{ij}^{(0)}$ for $w_{ij}^{(0)}$ outside $B_{ij}$,  $j=1, 2, \cdots,  M_i, i=1,2, 3$.
		\State (iii) Solve \eqref{localgeneralinterior} with $$\psi=\mathscr{T}_{ij}^{\prime}\bigg(\sum\limits_{m=1, m\neq j}^{M_i}w_{im}^{(0)}+\sum\limits_{\ell=1,\ell\neq i}^{3}\sum\limits_{m=1}^{M_{\ell}}w_{\ell m}^{(0)}\bigg),$$
		for the total field $u^{(0)}(\bs x)$ in $B_{ij}\setminus\overline{\Omega}_{ij}$, $j=1, 2, \cdots,  M_i, i=1,2, 3$.
		\State (iv) Use $w_{ij}^{(0)}$ and $u^{(0)}$ to compute $\bs r^{(0)}=\bs b-(\mathcal I+\mathbb K^{\prime}-\mathbb S^{\prime}\mathbb K^{\prime})\bs W^{(0)}$, and $\bs v^{(1)}=\bs r^{(0)}/\|\bs r^{(0)}\|$.
\vskip 2pt
		\State  \underline{\em Iterative steps} 
		\For{$n=1,2,\cdots, n_{max}$}
		\For{$k=2 \to n$}
		\State Solve \eqref{localscatteringproblemoutside} with $g_{ij}=v_{ij}^{(k-1)}$ for $w_{ij}^{(k-1)}$ outside $B_{ij}$, $i=1,2, 3$, $j=1, 2, \cdots,  M_i$.
		\State Solve \eqref{localgeneralinterior} with $$\psi=\mathscr{T}_{ij}^{\prime}\bigg(\sum\limits_{m=1, m\neq j}^{M_i}w_{i m}^{(k-1)}+\sum\limits_{\ell=1,\ell\neq i}^{3}\sum\limits_{m=1}^{M_{\ell}}w_{\ell m}^{(k-1)}\bigg),$$  
		\State for $\tilde v_{ij}$,  $j=1, 2, \cdots,  M_i$, $i=1,2, 3.$
		\State From $\mathscr K'_{ij} \widetilde{\mathscr K}_{ij}^{\prime -1}v_{ij}^{(k-1)}=w_{ij}^{(k-1)}$ and $$\mathscr S^{\prime}_{ij}\mathscr{T}_{ij}^{\prime}\Big(\sum_{m=1,m\neq j}^{M_i} \mathscr K'_{i m}\widetilde{\mathscr K}_{im}^{\prime -1}v_{im}^{(k-1)}+\sum_{\ell=1,\ell\neq i}^{3}\sum_{m=1}^{M_{\ell}}\mathscr K'_{\ell m} \widetilde{\mathscr K}_{\ell m}^{\prime -1}v_{\ell m}^{(k-1)}\Big)=\tilde v_{ij},$$ 
		\State we compute $\bs v^{(k)}=(\mathcal I+\mathbb K^{\prime}-\mathbb S^{\prime}\mathbb K^{\prime})\bs v^{(k-1)}$;
		\For{$m=1 \to k-1$}
		\begin{equation*}
		h_{m,k-1}=(\bs v^{(m)}, \bs v^{(k)});\quad \bs v^{(k)}=\bs v^{(k)}-h_{m,k-1}\bs v^{(m)}.
		\end{equation*}
		\EndFor
		\State $h_{k,k-1}=\|\bs v^{(k)}\|$, $\bs v^{(k)}=\bs v^{(k)}/h_{k,k-1}$.
		\EndFor
		\State Compute $W^{(n)}=W^0+\sum_{i=1}^{n-1}y_i\bs v_i,$ where $\bs y=(y_1, y_2, \cdots, y_{n-1})^{\rm T}$ minimizes
		\begin{equation*}
		J(\bs y):=\big\|\|\bs r_0\| \bs e_1-\bar{H}_{n-1}\bs y\big\|.
		\end{equation*}
		\If{$J(\bs y)\leq \varepsilon$}
		\State  Stop iteration.
		\EndIf
		\EndFor
		\State  \underline{\em Final step}
		\State Solve \eqref{localscatteringproblemoutside} with $g_{ij}=W_{ij}^{(n)}$ for $w_{ij}^{(n)}$ outside $B_{ij}$, $j=1, 2, \cdots,  M_i$, $i=1,2, 3$. 
		\State Solve \eqref{localgeneralinterior} with  $$\psi=\mathscr{T}_{ij}^{\prime}\bigg(\sum\limits_{m=1, m\neq j}^{M_i}w_{i m}^{(n)}+\sum\limits_{\ell=1,\ell\neq i}^{3}\sum\limits_{m=1}^{M_{\ell}}w_{\ell m}^{(n)}\bigg),$$ for total field $u_{ij}^{(n)}$ in $B_{ij}\setminus\overline{\Omega}_{ij}$,  $j=1, 2, \cdots,  M_i$, $i=1,2, 3$. 
	\end{algorithmic}
\end{algorithm}

Different from  the homogeneous media case, we have two types  of solution operators $\mathscr S_{ij}^{\prime}$ and $\widetilde{\mathscr K}_{\ell k}^{\prime -1}$ involved in the equations in \eqref{conciseeqheter}. For  $\mathscr S_{ij}^{\prime},$ we need to solve boundary value problems \eqref{localgeneralinterior}, which involve general inhomogeneous refraction index $n(\bs x)$. 
On the other hand, ${\mathscr K}_{\ell k}^{\prime}\widetilde{\mathscr K}_{\ell k}^{\prime -1}$ are actually the solution operators of the  problems \eqref{localscatteringproblemoutside} (exterior to a single scatterer). It also can be seen as the extension of the purely outgoing components outside the artificial boundaries $\Gamma^{ij}$ similar to the homogeneous case. High order discretization for the BVP \eqref{localgeneralinterior} (inclduding \eqref{boundedproblem}  as a special case) and the solution of the exterior problem \eqref{localscatteringproblemoutside} will be discussed in the next two subsections.

\begin{rem}
	Although the boundary data $W_{ij}$ of purely outgoing components $w_{ij}$ on $\Gamma^{ij}$ are the unknows in \eqref{conciseeqheter}, the total field $u(\bs x)$ in truncated domains $B_{ij}\setminus\Omega_{ij}$ will be calculated in all the iterations. 
\end{rem}

\subsection{High order spectral element discretization for BVPs} Without loss of generality, we take the BVP \eqref{localgeneralinterior} w.r.t a sound soft scatterer $\Omega_{1j}$ as an example to show the details of the high order spectral element discretization. Similar spectral element discretization  can be extended  to other situations straightforwardly. Let $\mathcal{T}=\{K^e\}_{e=1}^E$ be a non-overlapping quadrilateral partition of the domain $B_{1j}\backslash \overline \Omega_{1j}$ (see Fig. \ref{largeandsmall} (Right)). Assume that each element $K^e$ in the partition $\mathcal{T}$ can be obtained by a transformation
$\bs F^e$ from the reference square
\begin{equation}
\widehat{K}=\{\widehat{\bm{x}}=(\xi,\eta): -1\leqslant \xi,\eta\leqslant 1\}=[-1,1]^2.
\end{equation}
Let
\begin{equation}
\widehat{Q}_p=\mathrm{span}\big\{\xi^{p_1}\eta^{p_2}: -1\leqslant \xi,\eta\leqslant 1, 0\leqslant p_1,p_2\leqslant p\big\},
\end{equation}
 be the space of polynomials of degree less than $p$ along each coordinate direction. For any subdomain $K^e\in\mathcal T,$ we define the  finite dimensional space
\begin{equation}
\mathcal{W}_p(K^e)=\big\{\varphi : \varphi=\widehat{\varphi}\circ (\bs F^e)^{-1},\;\widehat{\varphi}\in\widehat{Q}_p \big\}.
\end{equation}
Then the spectral element approximation space is given by
\begin{equation}
\bm{V}_p=\Big\{v_p\in H^1(B_{1j}\backslash\Omega_{1j}):v_p\big|_{K^e}\in \mathcal{W}_p(K^e), \;v_p\big|_{\partial \Omega_{1j}}=0\Big\}.
\end{equation}
The spectral element discretization of \eqref{localgeneralinterior} is to  find $v_p\in \bm{V}_p$ such that 
\begin{equation}\label{semloalproblem}
\mathcal A(v_p, w)=-\langle \mathscr T_{1j}^{\prime}u^{\rm in}+\psi, w\rangle_{\Gamma^{1j}},\quad \forall w\in {\bm V}_p,
\end{equation}
where
\begin{equation}\label{discretebilinearform}
\mathcal A(v_p, w)=-(\nabla v_p, \nabla w)_{B_{1j}\setminus\Omega_{1j}}+\kappa^2(n(\bs x)v_p,w)_{B_{1j}\setminus\Omega_{1j}}+\langle \mathscr T_{1j}v_p, w\rangle_{\Gamma^{1j}}.
\end{equation}
\begin{rem}\label{truncTmk}  
	In real computation, the DtN boundary condition \eqref{DtNmapping}  needs to be approximated by the truncation:
	${\mathscr  T}^N_{ij}[v]:= \sum_{|n|=0}^{N}  \kappa\frac{H_n^{(1)'}(\kappa R_{ij})}{H^{(1)}_n(\kappa R_{ij})} \hat v_n  e^{\ri n\theta_{ij}}$  with  a suitable cut-off  number $N.$ Harari and Hughes \cite{harari1992analysis} showed that the choice of $N\ge \kappa R_{ij}$  could guarantee the solvability of the approximate problem with a certified  accuracy.   We also refer to \cite{hsiao2011error} for the error analysis and  numerical  studies on  the selection of an optimal cut-off number $N.$ In  practice, the   choice  $N\ge \kappa R_{ij}$ is always safe although it is conservative at times.  Grote and Keller \cite{grote1995nonreflecting} suggested a different modification  of the DtN boundary condition to remove the constraint on $\kappa R_{ij}$ for any fixed $N.$
\end{rem}

In the spectral element discretization, Lagrange nodal basis based on the Legendre Gauss-Lobatto (LGL) points is used and the continuous inner product $(\cdot, \cdot)_{B_{1j}\backslash\Omega_{1j}}$ can be evaluated by element-wise discrete inner product based on tensorial Legendre-Gauss-Lobatto(LGL) quadrature (see e.g., \cite{deville2002high}). However, much care is needed to deal with the term $\langle \mathscr T_{1j}^Nv_p, w\rangle_{\Gamma^{1j}}$, as the DtN operator is global, but the spectral-element approximation  is piecewise. One can evaluate by using the fast Fourier transform (FFT), but this requires an
intermediate interpolation to interplay between spectral-element grids and Fourier points. Since $v_p|_{\Gamma^{1j}}\in C^0$ a naive interpolation only results in a first-order convergence. Here the semi-analytic means introduced in \cite{yang2016seamless} is adopted to compute $\langle \mathscr T_{1j}^Nv_p, w\rangle_{\Gamma^{1j}}$.

Let us recap on the semi-analytic formula for the computation of $\langle \mathscr T_{1j}^Nv_p, w\rangle_{\Gamma^{ij}}$. Denote by $\{\xi_k=\eta_k\}_{k=0}^p$ (in ascending order) the LGL  points in $[-1,1],$ and $\{l_k\}_{k=0}^p$ the associated Lagrange interpolating basis polynomials. Correspondingly,  the spectral-element grids and basis on $K^e$ are given by
\begin{equation}\label{newbasis}
\bs x_{k\ell}=\bs F^e(\xi_k,\eta_\ell),\quad  \psi_{k\ell}(\bs x)=l_k(\xi)l_{\ell}(\eta),\quad 0\le k,\ell\le p,
\end{equation}
where $\bs F^e$ is the Gordon-Hall transform \cite{gordon1973transfinite}. Formally, we can write
\begin{equation}\label{uebasis0}
v_p(x,y)\big|_{K^e}= \sum_{k,\ell} \tilde v_{k\ell}^e\,  l_k(\xi)l_{\ell}(\eta),
\end{equation}
where the unknowns $\{\tilde v_{k\ell}^e\}$ are determined by the scheme  \eqref{semloalproblem}.

\begin{figure}[htbp]
	\begin{center}
		\subfigure[Curvilinear elements]{ \includegraphics[scale=.4]{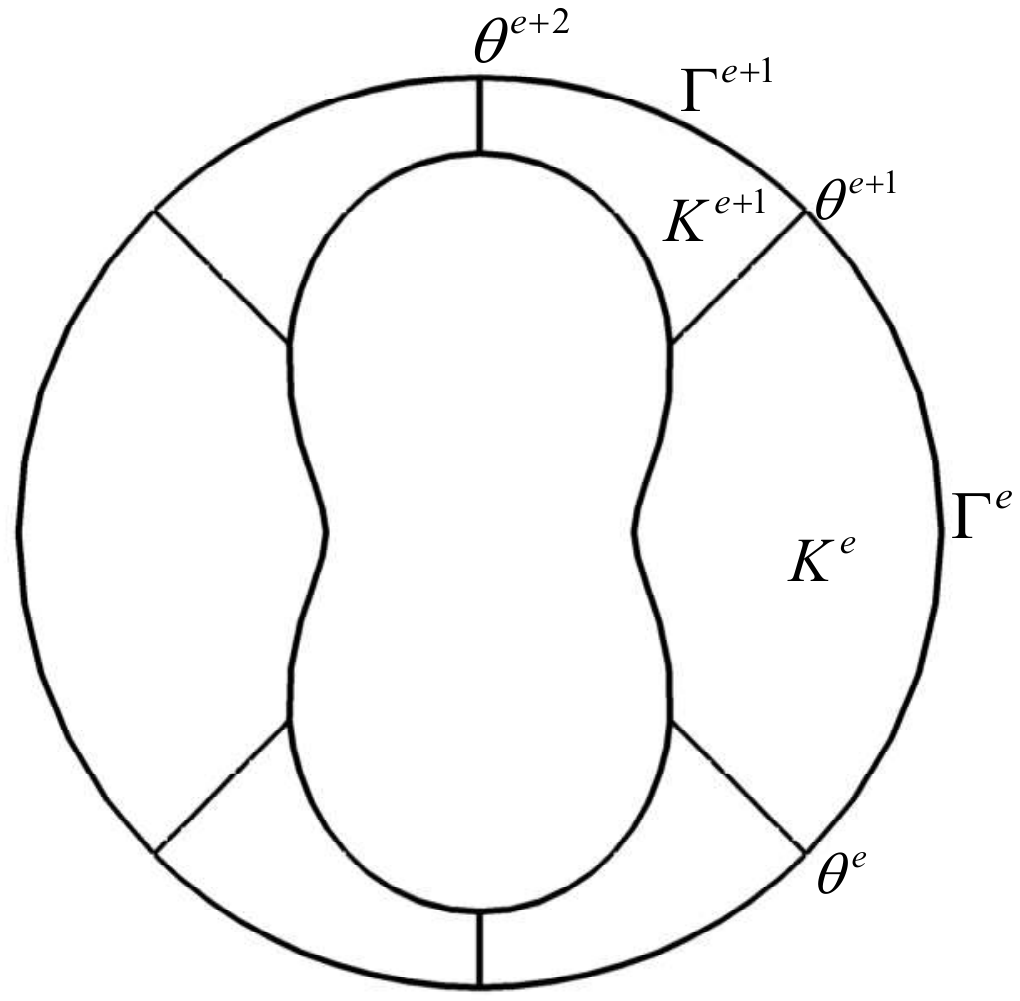}}\hspace*{6pt}
		\subfigure[LGL points on  $\widehat K$]{ \includegraphics[scale=.28]{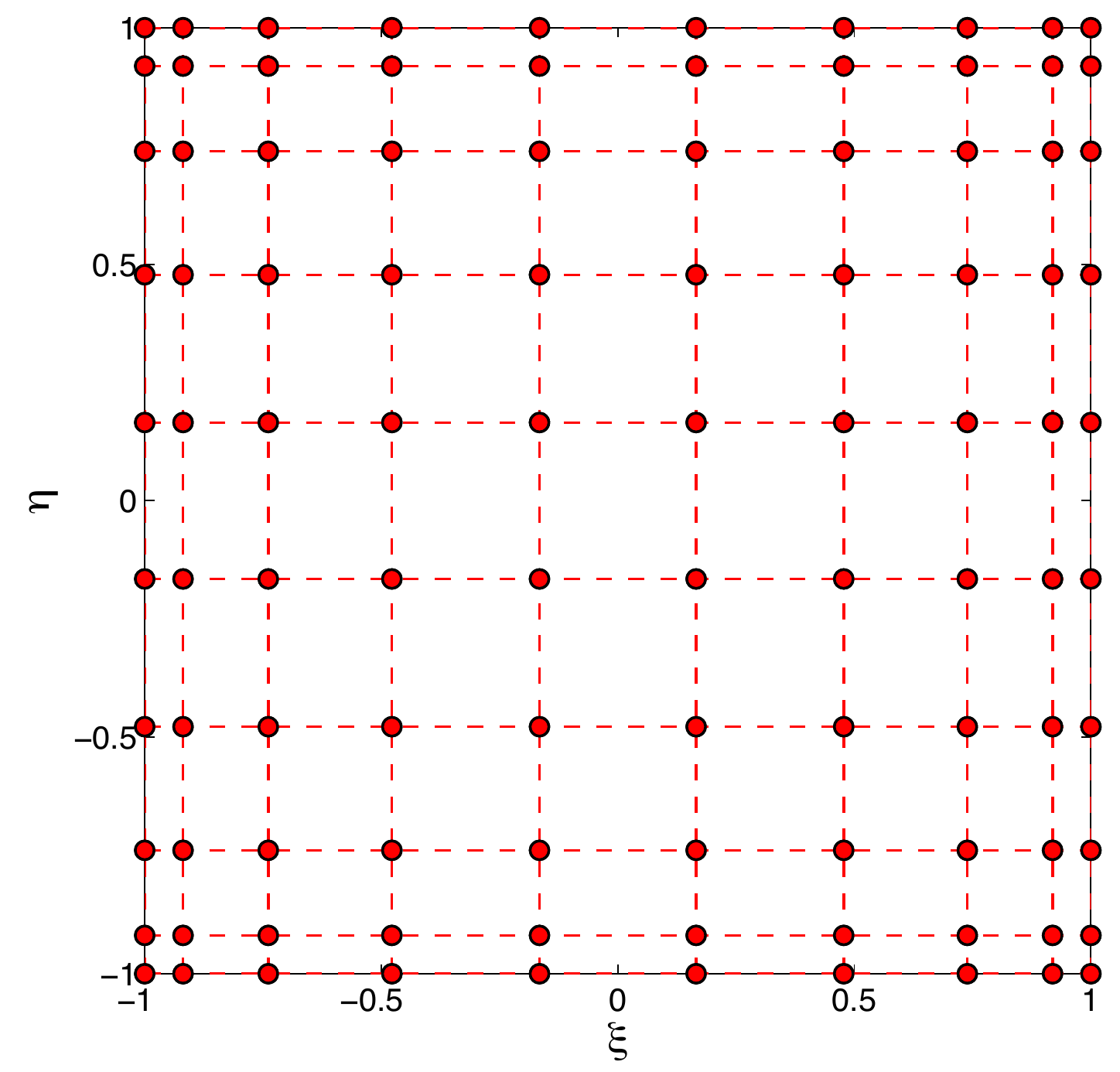}}\hspace*{8pt}
		\subfigure[Mapped LGL points on $K^e$]{ \includegraphics[scale=.31]{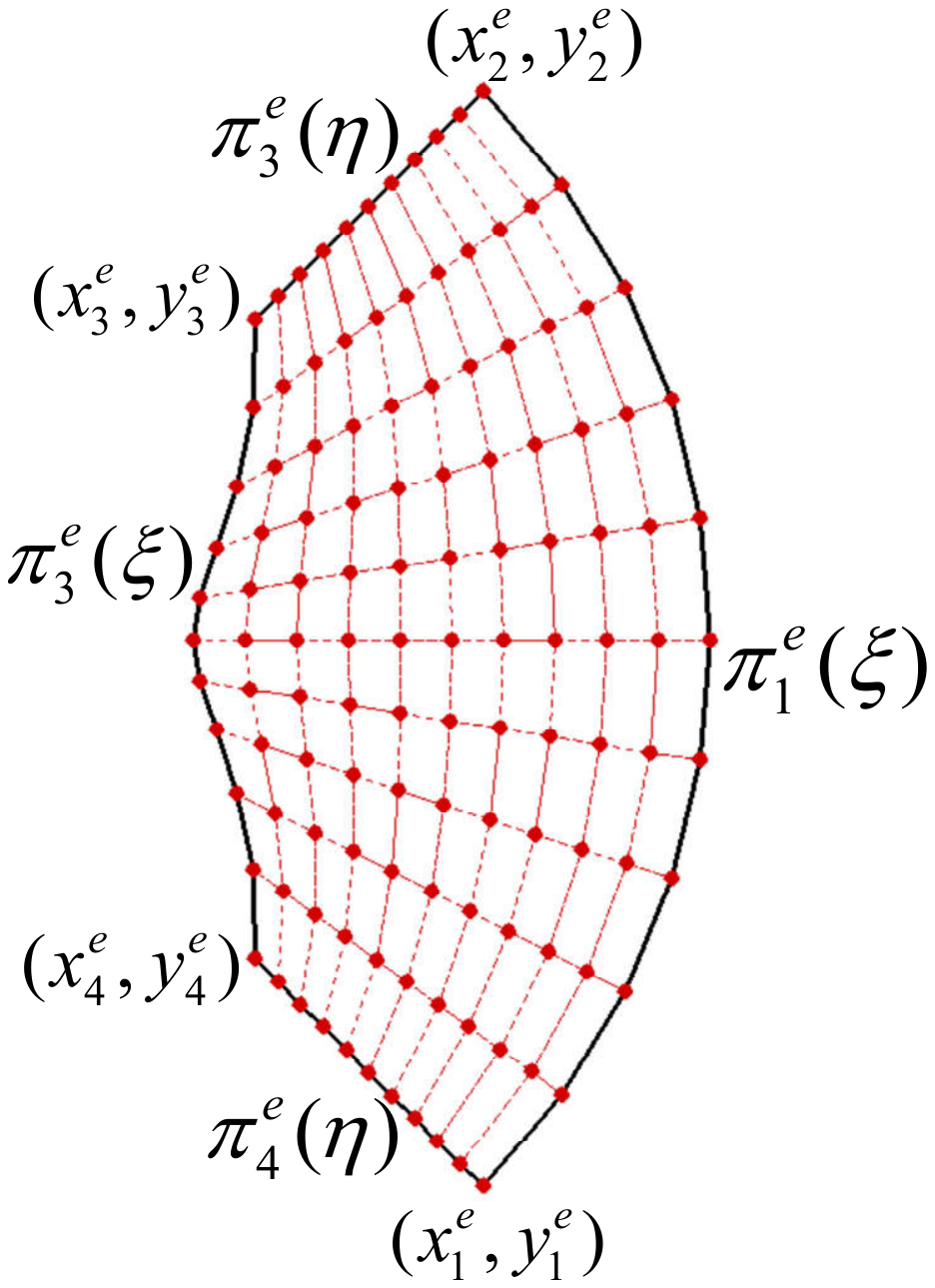}}
		\caption{\small Curvilinear elements and tensorial  LGL points on the reference square and a curvilinear element via  the new elemental mapping based on Gordon-Hall transformation.}
		\label{domaingraph}
	\end{center}
\end{figure}


We choose to use the Gordon-Hall transform for the mapping between reference square to our curvilinear element $K^e$. In particular, we consider a curvilinear element  $K^e$ with vertices $\{(x_k^e,y_k^e)\}_{k=1}^4$ along $\Gamma^{1j}$.  Let  $\{\bs \pi_k^e(t)=(\pi_{k1}^e(t), \pi_{k2}^e(t)), t\in [-1,1]\}_{k=1}^4$ be, respectively, the parametric form of four sides  such that
\begin{equation}\label{ordersetting}
\bs \pi_1^e(-1)= \bs \pi_4^e(1),\;\;  \bs \pi_1^e(1)= \bs \pi_2^e(1),\;\;  \bs \pi_2^e(-1)= \bs \pi_3^e(1),\;\; \bs \pi_3^e(-1)= \bs \pi_4^e(-1),
\end{equation}
see  Figure \ref{domaingraph} (b). In this case,  the Gordon-Hall transform takes the form
\begin{equation} \label{GordonHall}
\begin{split}
{\bs x}&={\bs F}^e(\xi,\eta)= {\bs \pi}_1^e(\xi) \frac{1+\eta}{2}+{\bs \pi}_3^e(\xi)\frac{1-\eta}{2} +\frac{1+\xi}{2} {\bs \pi}_2^e(\eta) +  \frac{1-\xi}{2} {\bs \pi}_4^e(\eta)\\
&- \bigg( {\bs \pi}_1^e(-1) \frac{1-\xi}{2}       +{\bs \pi}_1^e(1)\frac{1+\xi}{2}\bigg) \frac{1+\eta}{2} - \bigg( {\bs \pi}_3^e(-1) \frac{1-\xi}{2}       +{\bs \pi}_3^e(1)\frac{1+\xi}{2}\bigg) \frac{1-\eta}{2}\,,
\end{split}
\end{equation}
where the edge $\eta=1$ of $\widehat K$ is mapped to the arc $\Gamma^e=\{r=R_{ij},\; \theta\in (\theta_e,\theta_{e+1})\}$ of $K^e,$ i.e.,  
\begin{equation}\label{paraformA}
\Gamma^e\,:\,\; x=\pi_{11}^e(\xi),\;\; y= \pi_{12}^e(\xi),\quad \forall\,\xi\in [-1,1].
\end{equation}
Accordingly,  the  spectral-element grids in shifted polar coordinates on $\Gamma^e$  (see Fig.  \ref{domaingraph}) satisfy
\begin{equation}\label{thetaej}
\cos \theta_k^e=R_{1j}^{-1}(\pi_{11}^e(\xi_k)-x_{1j}^c)\;\; {\rm or}\;\;  \sin \theta_k^e=R_{1j}^{-1}(\pi_{12}^e(\xi_k)-y_{1j}^c),  \quad 0\le k\le p.
\end{equation}

Thanks to \eqref{DtNmapping} and \eqref{uebasis0}, the calculation of  $\langle \mathscr T_{ij}^Nv_p, w\rangle_{\Gamma^{ij}}$ needs to evaluate
\begin{equation}\label{mainint}
\begin{split}
\int_0^{2\pi} v_p(x,y)\big|_{\Gamma^{1j}} e^{-\ri n\theta}\,{\rm d} \theta= \sum_{e=1}^{E_{\Gamma}}\sum_k  \tilde v_{kp}^e \int_{-1}^1 l_k(\xi)  e^{-\ri n\theta(\xi)}\frac{{\rm d} \theta}{{\rm d} \xi}\, {{\rm d} \xi},
\end{split}
\end{equation}
where $E_{\Gamma}$ is the number of elements which have one edge coincide with $\Gamma^{1j}$. As the nodal basis $\{l_k\}$ can be represented in terms of  Legendre polynomials,  it suffices to compute
\begin{equation}\label{mainint3}
{\mathbb I}_{nm}^e:=\int_{-1}^1 P_m(\xi)\, e^{-\ri n\theta(\xi)}\, \frac{{\rm d}\theta} {{\rm d}\xi}\, {\rm d}\xi, \;\;\;\; {\rm for}\;\;  m\ge 0,
\end{equation}
where $P_m$ is the Legendre polynomial of degree $m$, and   by \eqref{paraformA},
\begin{equation}\label{dthetax}
\frac{{\rm d}\theta} {{\rm d}\xi}=\frac 1 R_{1j}  \frac{{\rm d}\gamma} {{\rm d}\xi}=R_{1j}^{-1}\sqrt{\big[\partial_\xi \pi_{11}^e(\xi)\big]^2+\big[\partial_\xi \pi_{12}^e(\xi)\big]^2}\,.
\end{equation}
It is seen that  the integrand is highly oscillatory for large $|m|,$ and the efficiency and accuracy in computing  ${\mathbb I}_{nm}^e$  essentially relies on the choice of the  parametric form for $\bs\pi_1^e(\xi).$ It has been shown in \cite{yang2016seamless} that the parametric
\begin{equation}\label{ArcMap2B}
\bs \pi^e_1(\xi)=\big(\pi_{11}^e(\xi), \pi_{12}^e(\xi)\big)=\big(R_{1j}\cos(\hat \theta_e\xi+\beta_e)+x_{1j}^c, R_{1j}\sin(\hat \theta_e\xi+\beta_e)-y_{1j}^c\big),
\end{equation}
with
\begin{equation}\label{hattheta}
\hat \theta_e=\frac{\theta^{e+1}-\theta^e}{2},\quad \beta_e=\frac{\theta^e+\theta^{e+1}}{2},
\end{equation}
has a very important property that $\theta$ is linearly depends on parameter $\xi$. So the Gordon-Hall transformation \eqref{GordonHall} with parametric \eqref{ArcMap2B}, we have
\begin{equation}\label{newrela}
\theta(\xi)=\hat \theta_e\xi+\beta_e,\quad \frac{{\rm d}\theta} {{\rm d}\xi}=\hat \theta_e,
\end{equation}
in \eqref{mainint3}.
This leads to the following analytic formula (cf. \cite{yang2016seamless}) for the integral \eqref{mainint3}:
\begin{equation}\label{analyticF1}
\begin{split}
& {\mathbb I}_{n0}^e=2\hat\theta_e\delta_{n0}; \quad {\mathbb I}_{nm}^e=\frac{2\hat \theta_eR_{1j}}{{\rm i}^n}\sqrt{\frac{\pi}{2m\hat \theta_e}}J_{n+1/2}(m\hat \theta_e)\,
e^{-{\rm i}m\beta_e},
\end{split}
\end{equation}
and ${\mathbb I}_{n,-m}^e=({\mathbb I}_{nm}^e)^*$ for $n\ge 0$, $m\ge 1, $
where $J_{n+1/2}$ is the Bessel function of the first kind.

\subsection{Computation of the scattering field outside the artificial boundary}
Since the purely outgoing wave $w_{\ell k}$ with respect to the scatterer $\Omega_{\ell k}$ will be an incident wave of all other scatterers,  we  need to compute $w_{\ell k}$ on $\partial\Omega_{ij}$ and $\Gamma_{ij}$ for multiple scattering problems in  homogeneous or locally inhomogeneous media, respectively, in the implementation of  the iterative algorithm.  We denote  $\Omega_{ij}$  another scatterer away from $\Omega_{\ell k},$ and denote  $\Gamma^{ij}$  another artificial boundary away from $\Gamma^{\ell k}$.

For the homogeneous media case, we can set the artificial boundary $\Gamma^{\ell k}$ used for truncation \eqref{boundedproblem} large enough (cf. \cite{Geuzaine2010An}) to enclose all other scatterers $\{\Omega_{ij}\}$ inside, see Fig. \ref{largeandsmall} (Left) for an example of two scatterers case. In this case, all boundary information on $\partial\Omega_{3j}$ (colored in red) can be obtained via the numerical solution of the BVP \eqref{boundedproblem} with respect to $\Omega_{1j}$. However, enclosing all other scatterers leads to a large computational domain and hence high computation cost.
\begin{figure}[htbp]
	\centering
	\includegraphics[scale=0.5]{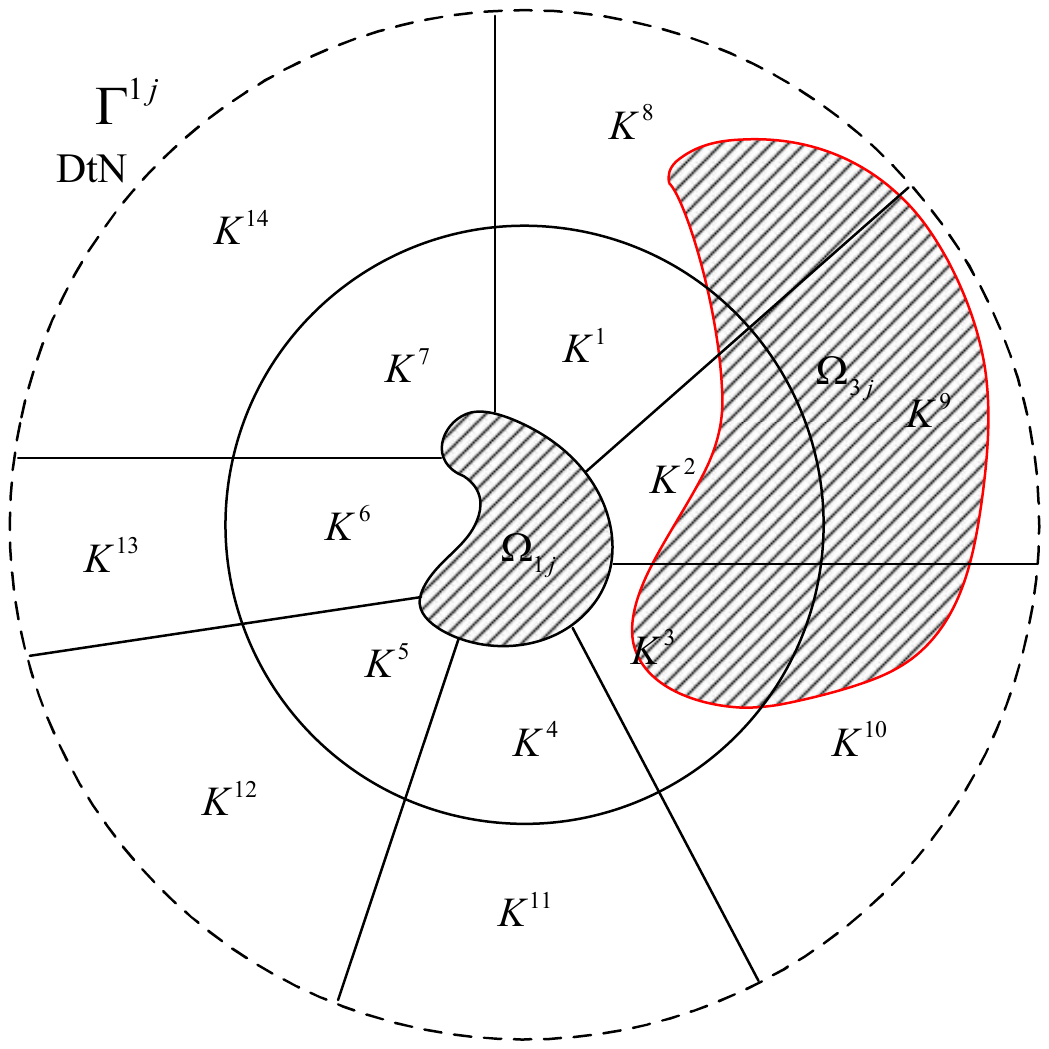}\quad
	\includegraphics[scale=0.6]{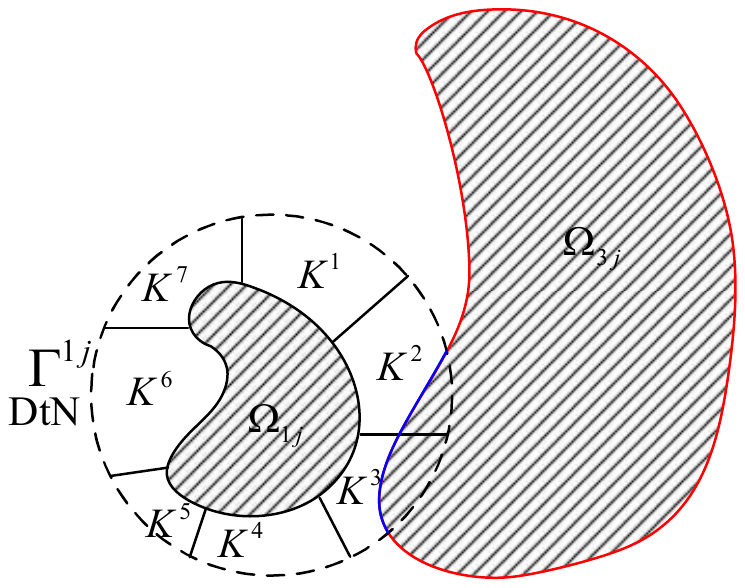}
	\caption{Left: Large artificial boundary to enclose other scatterers. Right: Small artificial boundary intersecting with the boundaries of other scatterers.}
	\label{largeandsmall}
\end{figure}
It is more efficient  to set artificial boundaries close to the scatterers (see  Fig. \ref{largeandsmall} (Right)). In fact,  the  part of the boundary of the scatterer $\Omega_{3j}$ (colored in blue) can be inside the domain $B_{1j}$ and the rest part of $\partial\Omega_{3j}$ (colored in red) can be outside $B_{1j}$. The boundary information $w_{1j}$ on the blue part can be obtained via spectral element solution of the BVP \eqref{boundedproblem} with respect to scatterer $\Omega_{1j}$. However, the boundary information on the red part requires an extension of the numerical solution outside $B_{1j}$. In general,  the extension of the spectral element approximation in $B_{\ell k}\setminus\Omega_{\ell k}$ can be obtained by using the values on the artificial boundary $\Gamma^{\ell k}$. Since $\Gamma^{\ell k}$ here has a circular shape, the extension of a given spectral element solution $v_p$ outside $B_{\ell k}$ is the separation variable solution given by
\begin{equation}
\displaystyle v_p^{\rm ext}(\bm{x})=\sum_{n=-\infty}^{\infty} \frac{\widehat{v}^n_p}{H_h^{(1)}(\kappa R_{\ell k})}H_n^{(1)}(\kappa r_{\ell k})e^{{\ri}n\theta_{\ell k}}, \ \ \bm{x}\notin B_{\ell k},
\end{equation}
where $(r_{\ell k}, \theta_{\ell k})$ is the polar coordinate of $\bs x-\bs c_{\ell k}$,
\begin{equation}
\widehat{v}^n_p = \frac{1}{2\pi} \int_{0}^{2\pi} v_p(x_{\ell k}^c+R_{ij}\cos\theta_{\ell k}, y_{\ell k}^c+R_{\ell k}\sin\theta_{\ell k}) e^{-{\ri}n\theta_{\ell k}}\mathrm{d}\theta_{\ell k},
\end{equation}
is the Fourier coefficients of $v_p|_{\Gamma^{\ell k}}$. By using the analytic formula \eqref{analyticF1}, the Fourier coefficients $\{\widehat{v}^n_p\}$ can be calculated accurately and efficiently for arbitrary high modes.

The scattering problems given by \eqref{localscatteringproblemoutside} will also be solved by using the separation variable method in the same manner due to the circular shape of the artificial boundary $\Gamma^{\ell k}$.

%

\subsection{ A comparison with the Grote-Kirsch's approach in \cite{grote2004dirichlet}} 
In  Grote and  Kirsch \cite{grote2004dirichlet},  the reduction of  a multiple scattering problem 
with well separated scatterers using  the circular/spherical DtN technique was proposed  as an extension of the DtN for a single scattering problem.  
 Consider for example  \eqref{helmholtzeq}-\eqref{bconscatterers} with the sound soft scatterers,  i.e., $\mathscr B_i=\mathcal I, i=1, 2, 3.$ 
%
 The reduced problem therein   is to 
find $u$ and $\{w_j\}_{j=1}^M$ satisfying 
\begin{equation}\label{trun2dmscatprob}
	\begin{cases}
	\Delta u+\kappa^2u=0 \quad & \hbox{in}\quad B\setminus\bar\Omega,\\
	u=g, \quad & \hbox{on}\quad\partial \Omega,\\
	\partial_{\bs n}u=\sum\limits_{j=1}^M\mathscr{T}_i[w_j],\;\;u=\sum\limits_{j=1}^M\mathscr{P}_i[w_j], 
	\quad & \hbox{on}\;\;\partial B_i,\;\; i=1, 2, \cdots, M, 
	\end{cases}
\end{equation}
where the transport   and propagation operators  are defined by
\begin{equation*}
\begin{split}
&\mathscr{T}_i[w_i](\theta_i):=\sum\limits_{|n|=0}^{\infty}\widehat w_n^i\frac{\kappa H_n^{(1)'}(\kappa R_i)}{H_n^{(1)}(\kappa R_i)}e^{\ri n\theta_i},\\
&\mathscr{T}_i[w_j](\theta_i):=\sum\limits_{|n|=0}^{\infty}\widehat w_n^j\Big(\frac{\kappa H_n^{(1)'}(\kappa r_j({\bs x}))}{H_n^{(1)}(\kappa R_j)}\frac{\partial r_j({\bs x})}{\partial r_i}+\frac{\ri nH_n^{(1)}(\kappa r_j({\bs x}))}{H_n^{(1)}(\kappa R_j)}\frac{\partial \theta_j({\bs x})}{\partial r_i}\Big)e^{\ri n\theta_j(\hat{\bs x})},\;j\neq i,\\
&\mathscr{P}_i[w_i](\theta_i):=w_i(\theta_i)\quad\mathscr{P}_i[w_j](\theta_i):=\sum\limits_{|n|=0}^{\infty}\widehat w_n^j
\frac{H_n^{(1)}(\kappa r_j({\bs x}))}{H_n^{(1)}(\kappa R_j)}e^{\ri n\theta_j({\bs x})},\;j\neq i,
\end{split}
\end{equation*}
for $\bs x=(R_i\cos\theta_i+x_i^c, R_i\sin\theta_i+y_i^c)\in \partial B_i$.  
 As mentioned in \cite{grote2004dirichlet},  one  can apply any finite-domain solver, e.g., the finite element or spectral element discretization, which typically leads to the linear system: 
\begin{equation}\label{blocksystem}
\left(\begin{array}{cc;{2pt/2pt}r}
 & & \mathbb O\\[2pt]
 \multicolumn{2}{c;{2pt/2pt}}{\raisebox{2ex}[0pt]{$\mathbb{\Huge K}$}} & -\mathbb T \\[2pt]\hdashline[2pt/2pt]
\mathbb O& \mathbb M & -\mathbb P 
\end{array}\right)
\begin{pmatrix}
\bs u^h_{\Omega}\\[2pt]
\bs u^h_{\partial B}\\[2pt]\hdashline[2pt/2pt]
\bs w^h
\end{pmatrix}=\begin{pmatrix}
\bs g\\[2pt]\hdashline[2pt/2pt]
\bs 0
\end{pmatrix},
\end{equation}
where $\{\bs u^h_{\Omega}, \bs u^h_{\partial B}\}$ and $\bs w^h$ are the unknowns for  the approximation  of $u$ and $w$, respectively. 
Denote by $\{\Phi_i\}$ and $\mathscr N=\mathscr N_{\Omega}\cup \mathscr N_{\partial \Omega}\cup\mathscr N_{\partial B}$ the nodal basis and nodes used in the discretization. Then the entries of $\mathbb K$, $\mathbb T$, $\mathbb M$, $\mathbb P$ and $\bs g$ are given by
\begin{equation*}
\begin{split}
&K_{ij}=(\nabla\Phi_j, \nabla\Phi_i)_{\Omega}-\kappa^2(\Phi_j, \Phi_i)_{\Omega},\quad i, j: \bs x_i, \bs x_j\in \mathscr N_{\Omega}\cup\mathscr N_{\partial B},\\
& T_{ij}=\langle\mathscr T\Phi_j,\Phi_i\rangle_{\partial B},\quad M_{ij}=\langle\Phi_j, \Phi_i\rangle_{\partial B} \quad M_{ij}=\langle\mathscr P\Phi_j, \Phi_i\rangle_{\partial B},\quad i,j:\bs x_i, \bs x_j\in\mathscr N_{\partial B},\\
&g_i=-\sum\limits_{j:\bs x_j\in\mathscr N_{\partial\Omega}}g(\bs x_j)K_{ij},\quad i:\bs x_i\in\mathscr N_{\partial\Omega},
\end{split}
\end{equation*}
where the operators $\mathscr T$ and $\mathscr P$ are consist of $\{\mathscr T_i\}$ and $\{\mathscr P_i\}$. In fact, the matrix $\mathbb K$ in \eqref{blocksystem} is block diagonal and the coupling of the scatterers is along the artificial boundary of each scatterer.  Indeed, the block iterative method (e.g., block Gauss-Seidel iterative method \cite{li2013two, jiang2012adaptive}) can be applied.
%


Although our approach follows the same spirit of ``decoupling'' the scatterers based on the superposition of waves and suitable iterative solvers,  it is different from the existing  methods in several aspects. 
Most importantly, we  reduce  the multiple scattering problem from a different perspective,	which allows us to conduct the convergence analysis and also leads to more efficient algorithm. 
Indeed, we derive the single scattering problems from the boundary integral theory, and  the communications between the scatterers are made simpler through the purely outgoing waves.  
The use of the  GMRES iteration can  effectively decouple the interior solver for the single scatterer  and interactions from other scatterers (see e.g., {\bf Algorithm 2}).  Through the intrinsic connections between the boundary integral formulation (for the incident waves from other scatterers) and the DtN operator (for the interior solver)  on the artificial boundary (cf. \eqref{equiveqn} and \eqref{newintegralformoutside}),  we are able to handle the interactions between the scatterers more efficiently and also show the convergence of the iterative approach from the boundary integral theory.  
In fact, this provides a more flexible numerical framework for multiple scattering problems, and can  relax the well separateness assumption of scatterers in  \cite{grote2004dirichlet}. 
The advantages of our approach are also verified by  numerical comparisons with the Grote-Kirsch's approach  in section \ref{sect5}.

\section{Convergence analysis for GMRES iteration}\label{sect4}
In this section, we prove  the convergence of the GMRES iteration for  \eqref{conciseeq} and \eqref{conciseeqheter}. The key step is to prove the compactness of operators $\mathbb K$ and $\mathbb K^{\prime}-\mathbb S^{\prime}\mathbb K^{\prime}$. Let us first review some properties of the integral operator defined by
\begin{equation}\label{generalintop}
(\mathscr A\phi)(\bs x):=\int_{G}K(\bs x, \bs y)\phi(\bs y)d\bs y,
\end{equation}
where $G$ is a measurable compact set in $\mathbb R^2$. The following conclusion can be found in many text books on linear integral equations (cf. \cite{mikhlin1960linear,kress1989linear}).
\begin{theorem}\label{theoremcompact}
	If the kernel $K(\bs x, \bs y)$ is continuous or weakly singular, then the integral operator $\mathscr A$ is a compact operator on $L^2(G)$.
\end{theorem}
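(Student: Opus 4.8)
The plan is to realise $\mathscr A$ as the limit, in the operator norm on $L^2(G)$, of a sequence of compact operators $\mathscr A_n$, and then invoke the standard fact that the compact operators form a closed subspace of the bounded linear operators on $L^2(G)$. The two hypotheses are handled separately.

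\emph{Step 1: continuous kernel.} If $K$ is continuous on the compact set $G\times G$, it is bounded and uniformly continuous there. By the Stone--Weierstrass theorem I would approximate $K$ uniformly by degenerate kernels $K_n(\bs x,\bs y)=\sum_{m=1}^{N_n}a_m^{(n)}(\bs x)\,b_m^{(n)}(\bs y)$ with $a_m^{(n)},b_m^{(n)}\in C(G)$. Each operator $\mathscr A_n$ with kernel $K_n$ has finite-dimensional range, hence is compact, and the Cauchy--Schwarz inequality gives
\begin{equation*}
\|\mathscr A-\mathscr A_n\|_{L^2(G)\to L^2(G)}\le \|K-K_n\|_{L^2(G\times G)}\le |G|\,\|K-K_n\|_{L^\infty(G\times G)}\longrightarrow 0,
\end{equation*}
so $\mathscr A$ is compact. (Equivalently, uniform continuity of $K$ together with Cauchy--Schwarz shows that $\mathscr A$ sends the unit ball of $L^2(G)$ into a uniformly bounded, equicontinuous subset of $C(G)$, which is relatively compact there by Arzel\`a--Ascoli, hence relatively compact in $L^2(G)$.)

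\emph{Step 2: weakly singular kernel.} Now $K$ is continuous on $\{(\bs x,\bs y)\in G\times G:\bs x\neq\bs y\}$ and obeys $|K(\bs x,\bs y)|\le C\,|\bs x-\bs y|^{-\alpha}$ for some exponent $\alpha$ strictly below the dimension of $G$ (for the boundary integral operators appearing in \eqref{16} and \eqref{integralopnohom} this means arclength dimension one with at worst a $|\bs x-\bs y|^{-\alpha}$, $\alpha<1$, or logarithmic, singularity). I would regularise near the diagonal: take continuous $\chi_n:[0,\infty)\to[0,1]$ with $\chi_n\equiv0$ on $[0,\tfrac1{2n}]$ and $\chi_n\equiv1$ on $[\tfrac1n,\infty)$, and set $K_n(\bs x,\bs y):=\chi_n(|\bs x-\bs y|)K(\bs x,\bs y)$. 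Each $K_n$ is continuous on all of $G\times G$ (it vanishes in a neighbourhood of the diagonal and coincides with the off-diagonal-continuous $K$ elsewhere), so $\mathscr A_n$ is compact by Step 1. The remainder kernel $(1-\chi_n(|\bs x-\bs y|))K(\bs x,\bs y)$ is supported in $\{|\bs x-\bs y|\le\tfrac1n\}$ and dominated there by $C|\bs x-\bs y|^{-\alpha}$, so Schur's test — the row- and column-sup-integrals coinciding by the symmetry of $|\bs x-\bs y|$ — gives
\begin{equation*}
\|\mathscr A-\mathscr A_n\|_{L^2(G)\to L^2(G)}\le \sup_{\bs x\in G}\int_G\big|(K-K_n)(\bs x,\bs y)\big|\,d\bs y\le C\,\sup_{\bs x\in G}\int_{\{\bs y\in G:\,|\bs x-\bs y|\le 1/n\}}|\bs x-\bs y|^{-\alpha}\,d\bs y\longrightarrow 0,
\end{equation*}
the last limit holding because $\alpha<\dim G$. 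Hence $\mathscr A$ is again an operator-norm limit of compact operators, and therefore compact.

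\emph{Expected main obstacle.} The continuous case is entirely routine. The real work is Step 2, and it concentrates in three points: pinning down the precise meaning of ``weakly singular'' so that the power $\alpha$ is admissible relative to $\dim G$; checking that the truncated kernels $K_n$ are genuinely continuous across all of $G\times G$ (this uses that $K$ is continuous off the diagonal, which is part of the definition); and establishing the uniform vanishing $\sup_{\bs x\in G}\int_{\{|\bs x-\bs y|\le 1/n\}}|\bs x-\bs y|^{-\alpha}\,d\bs y\to0$. With these in hand, closedness of the compact operators under operator-norm limits completes the argument, and the theorem then applies directly to the operators $\mathbb K$ and $\mathbb K^{\prime}-\mathbb S^{\prime}\mathbb K^{\prime}$ whose compactness is needed in the convergence analysis for the GMRES iteration.
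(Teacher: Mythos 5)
Your proof is correct. Note that the paper itself offers no proof of this statement: it is quoted as a classical fact with a pointer to the textbooks of Mikhlin and Kress, and your two-step argument---degenerate-kernel (or Arzel\`a--Ascoli) approximation for continuous kernels, then truncation of the weakly singular kernel near the diagonal with the remainder controlled by a Schur-type bound $\sup_{\bs x}\int_{\{|\bs x-\bs y|\le 1/n\}}|\bs x-\bs y|^{-\alpha}\,d\bs y\to 0$ and closedness of the compacts under operator-norm limits---is exactly the standard proof found in those references, including the logarithmic case relevant to $H_0^{(1)}$. The only cosmetic remark is that the Schur test gives the geometric mean of the row- and column-sup integrals, but since your majorant $C|\bs x-\bs y|^{-\alpha}$ is symmetric both are bounded by the same quantity, as you already observed, so the stated estimate stands.
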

\begin{theorem}\label{boundedinverse}
	Let $\bs X$ be a normed linear space, $\mathscr A: \bs X\rightarrow \bs X$ a compact linear operator, and let $\mathcal I-\mathscr A$ be injective. Then the inverse operator $(\mathcal I-\mathscr A)^{-1}$ exists and is bounded.
\end{theorem}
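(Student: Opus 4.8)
The statement is the classical Riesz--Fredholm theorem for compact perturbations of the identity, so the plan is to follow the Riesz theory of compact operators. Write $L:=\mathcal I-\mathscr A$. Since injectivity is given, two things remain: that $L$ is surjective (so that $L^{-1}$ is defined on all of $\bs X$), and that $L^{-1}$ is bounded. A fact used repeatedly is that $L^n(\bs X)$ is closed in $\bs X$ for every $n\ge 0$: because $L^n=\mathcal I-\mathscr A_n$ with $\mathscr A_n:=\sum_{k=1}^n\binom nk(-1)^{k+1}\mathscr A^k$ again compact (a finite combination of compact operators), it suffices to prove the auxiliary \emph{Lemma}: if $\mathscr B$ is compact, then $(\mathcal I-\mathscr B)(\bs X)$ is closed. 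To prove the Lemma I would first note that $\ker(\mathcal I-\mathscr B)$ is finite dimensional (on its closed unit ball $\mathcal I$ and $\mathscr B$ agree, so that ball is relatively compact, and Riesz's characterisation of finite dimension applies), choose a closed complement $\widetilde{\bs X}$ of it, and then for a sequence $(\mathcal I-\mathscr B)\tilde x_n\to y$ with $\tilde x_n\in\widetilde{\bs X}$ show that $(\tilde x_n)$ is bounded by a rescale-and-contradiction argument, extract a convergent subsequence from $(\mathscr B\tilde x_n)$, and pass to the limit to produce a preimage of $y$ in $\widetilde{\bs X}$.

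For surjectivity of $L$ I would argue by contradiction using the descending chain $\bs X\supseteq\bs X_1\supseteq\bs X_2\supseteq\cdots$ with $\bs X_n:=L^n(\bs X)$, each closed by the Lemma. If $L$ were not onto then $\bs X_1\subsetneq\bs X$, and injectivity of $L$ forces every inclusion $\bs X_{n+1}\subsetneq\bs X_n$ to be strict. By Riesz's almost-orthogonality lemma applied to the closed proper subspace $\bs X_{n+1}\subset\bs X_n$, pick $x_n\in\bs X_n$ with $\|x_n\|=1$ and $\operatorname{dist}(x_n,\bs X_{n+1})\ge\tfrac12$. For $n<m$ write
\[
\mathscr A x_n-\mathscr A x_m=x_n-\bigl(Lx_n+x_m-Lx_m\bigr),
\]
and observe that $Lx_n$, $x_m$ and $Lx_m$ all lie in $\bs X_{n+1}$, whence $\|\mathscr A x_n-\mathscr A x_m\|\ge\operatorname{dist}(x_n,\bs X_{n+1})\ge\tfrac12$. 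Thus $(\mathscr A x_n)$ has no Cauchy subsequence although $(x_n)$ is bounded, contradicting compactness of $\mathscr A$. Hence $L$ is surjective; being also injective, it is a bijection of $\bs X$, so $L^{-1}$ exists as a linear operator on $\bs X$.

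Boundedness of $L^{-1}$ I would likewise prove by contradiction. If $L^{-1}$ were unbounded there would exist $y_n$ with $\|y_n\|=1$ and $\|L^{-1}y_n\|\to\infty$; set $z_n:=L^{-1}y_n/\|L^{-1}y_n\|$, so that $\|z_n\|=1$ and $Lz_n=y_n/\|L^{-1}y_n\|\to 0$. Compactness of $\mathscr A$ yields a subsequence with $\mathscr A z_{n_k}\to z$; then $z_{n_k}=Lz_{n_k}+\mathscr A z_{n_k}\to z$, so $\|z\|=1$, while continuity of $L$ gives $Lz=\lim_k Lz_{n_k}=0$, contradicting injectivity. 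Therefore $L^{-1}$ is bounded, completing the proof.

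I expect the main obstacle to be the Lemma on closedness of the range, since it already packages the finite dimensionality of the kernel, the existence of a closed complement in a general normed space, and a somewhat delicate boundedness argument; by contrast the surjectivity step is a clean application of Riesz's almost-orthogonality lemma once closedness of the $\bs X_n$ is in hand, and the boundedness step is a short compactness argument. If one is willing to assume $\bs X$ complete --- which is the case for the $L^2(G)$ settings used here --- a few of these points simplify, but the scheme above uses only that $\bs X$ is a normed space.
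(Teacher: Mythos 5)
Your proof is correct: the closed-range lemma, the descending chain $\bs X_n=L^n(\bs X)$ with Riesz's almost-orthogonality lemma for surjectivity, and the compactness contradiction for boundedness of $L^{-1}$ are all carried out soundly, and the small technical points (compactness of $\mathscr A_n$, existence of a closed complement of the finite-dimensional kernel, strictness of the inclusions via injectivity of $L^n$) are handled properly. Note that the paper does not prove this statement at all; it quotes it as a classical result with references to standard texts on linear integral equations (Mikhlin, Kress), and your argument is precisely the standard Riesz--Fredholm development found in those references, so you have simply supplied the textbook proof the paper cites.
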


Let us first consider the operator $\mathbb K$ involved in homogeneous media case. It consists of the composition of operators $\mathscr B_{ij}$, $\mathscr K_{\ell k}$ and $\widetilde{\mathscr K}_{\ell k}^{-1}$.
\begin{theorem}\label{compactness}
	Suppose $\Omega_{ij}$ and $\Omega_{\ell k}$ are two different scatterers with $C^2$ boundary, $\mathscr B_{ij}$ are differential operators induced by boundary conditions on $\partial\Omega_{ij}$, $\mathscr K_{\ell k}$ are boundary integral operators defined in \eqref{mixedpotential}. Then the composition $\mathscr B_{ij}\mathscr K_{\ell k}$ are compact operators from $L^2(\partial\Omega_{\ell k})$ to $L^2(\partial\Omega_{ij})$.
\end{theorem}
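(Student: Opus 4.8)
The plan is to exploit the fact that the two scatterers are disjoint, so every kernel arising in $\mathscr{B}_{ij}\mathscr{K}_{\ell k}$ is smooth off the diagonal and in fact never meets the diagonal, which makes the composition a Hilbert--Schmidt operator.

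First I would record the key geometric observation: since $\Omega_{ij}$ and $\Omega_{\ell k}$ are distinct bounded scatterers, their boundaries are separated, i.e. $d_0:=\mathrm{dist}(\partial\Omega_{ij},\partial\Omega_{\ell k})>0$. Hence for all $\bs x\in\partial\Omega_{ij}$ and $\bs y\in\partial\Omega_{\ell k}$ one has $|\bs x-\bs y|\ge d_0$, so the free-space Green's function $G_{\kappa}(\bs x,\bs y)=-\tfrac{\ri}{4}H_0^{(1)}(\kappa|\bs x-\bs y|)$ stays away from its only singularity $\bs x=\bs y$. Because $H_0^{(1)}(z)$ is analytic for $z\neq 0$, the map $(\bs x,\bs y)\mapsto G_{\kappa}(\bs x,\bs y)$ extends to a $C^{\infty}$ function on a neighbourhood of $\partial\Omega_{ij}\times\partial\Omega_{\ell k}$ in $\mathbb R^2\times\mathbb R^2$, and the same holds for $\partial_{\bs n(\bs y)}G_{\kappa}(\bs x,\bs y)$.

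Next I would unwind the definition of $\mathscr{K}_{\ell k}$ from \eqref{mixedpotential}: in each of the three cases $\mathscr{K}_{\ell k}\phi(\bs x)$ is an integral over $\partial\Omega_{\ell k}$ against a kernel $\Phi_{\ell k}(\bs x,\bs y)$ that is a fixed linear combination of $G_{\kappa}(\bs x,\bs y)$ and $\partial_{\bs n(\bs y)}G_{\kappa}(\bs x,\bs y)$, hence $\Phi_{\ell k}\in C^{\infty}(\partial\Omega_{ij}\times\partial\Omega_{\ell k})$ by the previous paragraph. Then I would apply $\mathscr{B}_{ij}\in\{\mathcal I,\ \partial_{\bs n},\ \partial_{\bs n}+h\mathcal I\}$ in the $\bs x$-variable. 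Since $\Phi_{\ell k}(\cdot,\bs y)$ is smooth in a full neighbourhood of $\partial\Omega_{ij}$, uniformly in $\bs y$, differentiation under the integral sign is justified (Leibniz rule plus dominated convergence), so $(\mathscr{B}_{ij}\mathscr{K}_{\ell k}\phi)(\bs x)=\int_{\partial\Omega_{\ell k}}(\mathscr{B}_{ij}^{(\bs x)}\Phi_{\ell k})(\bs x,\bs y)\,\phi(\bs y)\,\mathrm{d}S_{\bs y}$. The new kernel $\mathscr{B}_{ij}^{(\bs x)}\Phi_{\ell k}$ is again smooth when $\mathscr{B}_{ij}=\mathcal I$ or $\partial_{\bs n}$, and in the Robin case it is a smooth kernel plus $h(\bs x)$ times a smooth kernel, hence at least continuous provided $h\in C(\partial\Omega_{3j})$ (boundedness of $h$ already suffices).

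Finally I would invoke the compactness criterion. A kernel that is continuous — here in fact smooth — on the compact set $\partial\Omega_{ij}\times\partial\Omega_{\ell k}$ belongs to $L^2(\partial\Omega_{ij}\times\partial\Omega_{\ell k})$, so $\mathscr{B}_{ij}\mathscr{K}_{\ell k}$ is Hilbert--Schmidt, hence compact, from $L^2(\partial\Omega_{\ell k})$ to $L^2(\partial\Omega_{ij})$; this is the two-domain version of Theorem \ref{theoremcompact}, proved by the identical Hilbert--Schmidt argument. I do not expect a serious obstacle here: the only points requiring care are (i) that no singularity of $H_0^{(1)}$ or of its derivatives is ever reached, which is guaranteed by $d_0>0$, and (ii) the mild regularity of $h$ in the Robin case; everything else is routine once the kernel is known to be smooth off the diagonal, and this lemma then feeds into the compactness of the block operator $\mathbb K$.
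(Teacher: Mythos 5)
Your proposal is correct and follows essentially the same route as the paper: both write $\mathscr B_{ij}\mathscr K_{\ell k}$ as an integral operator whose kernel (obtained by applying $\mathscr B_{ij}$ in $\bs x$ under the integral sign) is continuous on $\partial\Omega_{ij}\times\partial\Omega_{\ell k}$ because the two boundaries are disjoint, and then invoke the standard compactness criterion for continuous kernels. The only cosmetic difference is that you conclude via the Hilbert--Schmidt property, whereas the paper cites its Theorem \ref{theoremcompact}; your explicit remarks on the positive distance $d_0$ and on differentiating under the integral are fine and, if anything, slightly more careful than the paper's argument.
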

\begin{proof}
	From the definition of $\mathscr B_{ij}$ and $\mathscr K_{\ell k}$, we have
	\begin{equation}
	(\mathscr B_{ij}\mathscr K_{\ell k})\phi_{\ell k}(\bs x)=\begin{cases}
	\mathscr B_{ij}\mathcal{D}_{1 k}\phi_{1 k}+\ri\eta \mathscr B_{ij}\mathcal S_{1k}\phi_{1 k} & \ell=1,\\[4pt]
	-\ri\eta\mathscr B_{ij}\mathcal{D}_{\ell k}\phi_{\ell k}- \mathscr B_{ij}\mathcal S_{\ell k}\phi_{\ell k},& \ell=2\;{\rm or}\; 3.
	\end{cases}
	\end{equation}
    Since $\partial\Omega_{\ell k}$ is a closed curve of class $C^2$, we have
	\begin{equation}
	\begin{split}
	(\mathscr B_{ij}\mathcal{S}_{1 k})\phi_{1 k}(\bs x)&=\int_{\partial\Omega_{1 k}}\mathscr B_{ij} G_{\kappa}(\bs x, \bs y)\phi_{1k}(\bs y)dS_{\bs y},\quad \bs x\in\partial\Omega_{ij},\\
	(\mathscr B_{ij}\mathcal{D}_{1 k})\phi_{1 k}(\bs x)&=\int_{\partial\Omega_{1 k}}\mathscr B_{ij}\Big[\frac{\partial G_{\kappa}(\bs x, \bs y)}{\partial\bs n_y}\Big]\phi_{1k}(\bs y)dS_{\bs y},\quad \bs x\in\partial\Omega_{ij}.
	\end{split}
	\end{equation}
	Moreover,
	\begin{equation}
	\begin{cases}
	\displaystyle\mathscr B_{1j} G_{\kappa}(\bs x, \bs y)=G_{\kappa}(\bs x, \bs y),\;\;\mathscr B_{1j}\Big[\frac{\partial G_{\kappa}(\bs x, \bs y)}{\partial\bs n_y}\Big]=\frac{\partial G_{\kappa}(\bs x, \bs y)}{\partial\bs n_y},\\
	\displaystyle\mathscr B_{2j} G_{\kappa}(\bs x, \bs y)=\frac{\partial G_{\kappa}(\bs x, \bs y)}{\partial\bs n(\bs x)}\;\;\mathscr B_{2j}\Big[\frac{\partial G_{\kappa}(\bs x, \bs y)}{\partial\bs n_y}\Big]=\frac{\partial^2 G_{\kappa}(\bs x, \bs y)}{\partial\bs n_x\partial\bs n_y},\\
	\displaystyle\mathscr B_{3j} G_{\kappa}(\bs x, \bs y)=\frac{\partial G_{\kappa}(\bs x, \bs y)}{\partial\bs n(\bs x)}+hG_{\kappa}(\bs x, \bs y),\\
	\displaystyle\mathscr B_{3j}\Big[\frac{\partial G_{\kappa}(\bs x, \bs y)}{\partial\bs n_y}\Big]=\frac{\partial^2 G_{\kappa}(\bs x, \bs y)}{\partial\bs n_x\partial\bs n_y}+h\frac{\partial G_{\kappa}(\bs x, \bs y)}{\partial\bs n_y},
	\end{cases}
	\end{equation}
	are all continuous for $\bs x\neq \bs y$.  By using Theorem \ref{theoremcompact}, we conclude that $\mathscr B_{ij}\mathcal{D}_{1 k}$ and $\mathscr B_{ij}\mathcal{S}_{1 k}$ are compact operators from $L^2(\partial\Omega_{1k})$ to $L^2(\partial\Omega_{ij})$. Therefore, $\mathscr B_{ij}\mathscr K_{1k}$ are compact operators from $L^2(\partial\Omega_{1k})$ to $L^2(\partial\Omega_{ij})$. The compactness of operators $\mathscr B_{ij}\mathscr K_{\ell k}: L^2(\partial\Omega_{\ell k})\rightarrow L^2(\partial\Omega_{ij}), \ell=2, 3$ can be proved in the same way.
\end{proof}

Note that $\widetilde{\mathscr K}_{ij}^{-1}$ are the solution operators of the linear integral equations \eqref{localboundaryintegraleq}. The well-posedness of the boundary integral equations \eqref{localboundaryintegraleq} implies the boundedness of $\widetilde{\mathscr K}_{ij}^{-1}$.
\begin{theorem}\label{boundednessinverse}
	Assume that all boundaries $\partial\Omega_{\ell k} $ is of class $C^2$, $\mathfrak{Im} (\bar{\kappa}h) \geqslant 0$ and wavenumber $\kappa$ satisfying $\mathfrak{Im} \kappa \geqslant 0$. Then $\widetilde{\mathscr K}_{\ell k}^{-1}$ are bounded linear operators on $L^2(\partial\Omega_{\ell k})$.
\end{theorem}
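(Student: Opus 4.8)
The plan is to deduce the claim from the Riesz--Fredholm theory, in the spirit of Theorem~\ref{boundedinverse}: it suffices to show that each operator $\widetilde{\mathscr K}_{\ell k}=\tfrac12\mathcal I+\widehat{\mathscr K}_{\ell k}$ can be written as a continuously invertible operator plus a compact operator on the relevant boundary space (so that it is Fredholm of index zero) and that it is injective; bijectivity and, hence, boundedness of $\widetilde{\mathscr K}_{\ell k}^{-1}$ then follow. The injectivity will be the Brakhage--Werner uniqueness argument tailored to the three boundary conditions, where the sign hypotheses $\mathfrak{Im}\,\kappa\ge0$, $\mathfrak{Im}(\bar\kappa h)\ge0$ and $\eta\,\mathfrak{Re}\,\kappa\ge0$ enter.

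\emph{Fredholm structure.} For the Dirichlet group $(\ell=1)$ the kernels $G_\kappa(\bs x,\bs y)$ and $\partial_{\bs n(\bs y)}G_\kappa(\bs x,\bs y)$ of $\widehat{\mathcal S}_{1k}$ and $\widehat{\mathcal D}_{1k}$ are, respectively, weakly singular and (since $\partial\Omega_{1k}\in C^2$) continuous on $\partial\Omega_{1k}\times\partial\Omega_{1k}$, so by Theorem~\ref{theoremcompact} both are compact on $L^2(\partial\Omega_{1k})$ and $\widetilde{\mathscr K}_{1k}=\tfrac12\mathcal I+(\text{compact})$ is already of the required form. For the Neumann and Robin groups $(\ell=2,3)$ the adjoint double--layer piece $\widehat{\mathcal S}_{2k}$ (kernel $\partial_{\bs n(\bs x)}G_\kappa$) and all the $h$--weighted weakly singular pieces in $\widehat{\mathcal S}_{3k}$, $\widehat{\mathcal D}_{3k}$ are again compact by Theorem~\ref{theoremcompact}, while the hypersingular part $\widehat{\mathcal D}_{2k}$ (kernel $\partial_{\bs n(\bs x)}\partial_{\bs n(\bs y)}G_\kappa$) is treated by the classical splitting $\widehat{\mathcal D}_{2k}=\widehat{\mathcal D}_{2k}^{\,0}+\mathscr C_{2k}$, where $\widehat{\mathcal D}_{2k}^{\,0}$ is the static (Laplace) hypersingular operator, which by Maue's identity factors as a tangential derivative composed with a weakly singular operator composed with a tangential derivative and is a nonnegative, essentially self--adjoint operator, and $\mathscr C_{2k}$ is compact. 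Invoking the corresponding result in \cite{colton2013integral} that the principal part $\tfrac12\mathcal I-\ri\eta\,\widehat{\mathcal D}_{2k}^{\,0}$ is continuously invertible on the appropriate trace space shows that $\widetilde{\mathscr K}_{2k}$ and $\widetilde{\mathscr K}_{3k}$ are Fredholm of index zero as well.

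\emph{Injectivity.} Suppose $\widetilde{\mathscr K}_{\ell k}\phi=0$ and put $w:=\mathscr K_{\ell k}\phi$; by \eqref{integralrepsolu}--\eqref{BIElocal}, $w$ solves the exterior problem \eqref{localgeneral} with $\psi=0$, i.e.\ the Helmholtz equation in $\mathbb R^2\setminus\overline\Omega_{\ell k}$ with the Sommerfeld radiation condition and $\mathscr B_{\ell k}w=0$ on $\partial\Omega_{\ell k}$. Since $\mathfrak{Im}\,\kappa\ge0$ (and $\mathfrak{Im}(\bar\kappa h)\ge0$ in the Robin case), Green's identity on the annulus between $\partial\Omega_{\ell k}$ and a large circle together with Rellich's lemma force $w\equiv0$ outside $\overline\Omega_{\ell k}$, so $w^+=0$ and $\partial^+_{\bs n}w=0$ on $\partial\Omega_{\ell k}$. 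The jump relations of Propositions~\ref{theorem1} and~\ref{theorem2} then express the interior Cauchy data $(w^-,\partial^-_{\bs n}w)$ in terms of $\phi$ alone and show that $w|_{\Omega_{\ell k}}$ solves an interior boundary value problem; applying Green's first identity in $\Omega_{\ell k}$ and using $\eta\,\mathfrak{Re}\,\kappa\ge0$ built into the mixed potential \eqref{mixedpotential}, the imaginary part of the resulting energy identity forces the interior Cauchy data to vanish, and the jump relations give $\phi=0$. Combining this with the Fredholm structure and Theorem~\ref{boundedinverse} (resp.\ its extension to Fredholm operators of index zero) yields that $\widetilde{\mathscr K}_{\ell k}$ is boundedly invertible.

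The main obstacle is the Neumann/Robin case: the hypersingular operator $\widehat{\mathcal D}_{2k}$ is not compact on $L^2$, so the ``identity plus compact'' structure is not visible directly and one must carry out the tangential--derivative decomposition of the principal part and work with the correctly paired trace spaces; once that framework is fixed, the uniqueness step is routine, but the sign conditions on $\kappa$, $h$ and $\eta$ must be tracked carefully through the Green's identities so that the energy argument does close.
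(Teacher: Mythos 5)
Your plan is sound and, for the Dirichlet family, it coincides with the paper's proof: both of you observe that $\widehat{\mathcal S}_{1k}$ and $\widehat{\mathcal D}_{1k}$ have weakly singular (resp.\ continuous, by the $C^2$ assumption) kernels, apply Theorem \ref{theoremcompact}, and conclude via Theorem \ref{boundedinverse} once injectivity is noted. For the Neumann and Robin families, however, you take a genuinely different route around the hypersingular operator. You split off the \emph{static} (Laplace) hypersingular operator, use Maue's identity and its nonnegativity to get a Fredholm-of-index-zero structure on the paired trace spaces, and then establish injectivity by the Brakhage--Werner exterior-uniqueness argument, which is where the hypotheses $\mathfrak{Im}\,\kappa\ge 0$, $\mathfrak{Im}(\bar\kappa h)\ge 0$ and $\eta\,\mathfrak{Re}\,\kappa\ge 0$ enter. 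The paper instead stays on $L^2(\partial\Omega_{\ell k})$ throughout: it introduces the \emph{Helmholtz} hypersingular operator $\widehat{\mathscr D}^0_{\ell k}$ at an auxiliary wavenumber $\kappa_0$ that is not an interior eigenvalue, quotes from \cite{mikhlin1960linear,kress1989linear} that $(\widehat{\mathscr D}^0_{\ell k})^{-1}$ exists and is compact on $L^2(\partial\Omega_{\ell k})$, left-multiplies the integral equation by this inverse so that the resulting operator is $-\ri\eta$ times an ``identity minus compact'' operator (the difference $\widehat{\mathcal D}_{\ell k}-\widehat{\mathcal D}^0_{\ell k}$ being weakly singular), and then exhibits $\widetilde{\mathscr K}_{\ell k}^{-1}$ explicitly as a product of two operators bounded on $L^2$. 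What each approach buys: the paper's regularization delivers the stated $L^2$-boundedness of the inverse immediately and with an explicit formula, at the price of citing the compact invertibility of $\widehat{\mathscr D}^0_{\ell k}$ and leaving the injectivity of the transformed equation implicit; your route uses only the static operator and makes the uniqueness step explicit (indeed more complete than the paper on that point), but to land on the theorem's actual claim you must finish the trace-space bookkeeping you flag at the end — namely show $\widetilde{\mathscr K}_{\ell k}$ is an isomorphism from $H^1(\partial\Omega_{\ell k})$ (or $H^{1/2}$ onto $H^{-1/2}$ with a regularity upgrade) onto $L^2(\partial\Omega_{\ell k})$, so that the inverse is bounded $L^2\to H^1\hookrightarrow L^2$ — and note that the static hypersingular operator annihilates constants, so the invertibility of your principal part $\tfrac12\mathcal I-\ri\eta\widehat{\mathcal D}^{\,0}_{2k}$ is not off-the-shelf from \cite{colton2013integral} but follows from a short energy argument (real part of the pairing kills $\phi$) combined with the compactness of the embedding. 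With those two details supplied, your argument closes and is a legitimate alternative to the paper's proof.
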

\begin{proof}
	The boundedness of operator $\widetilde{\mathscr K}_{1 k}^{-1}: L^2(\partial\Omega_{1 k})\rightarrow L^2(\partial\Omega_{1 k})$ is a direct consequence  of Theorem \ref{theoremcompact} and \ref{boundedinverse},  since
	$$\widetilde{\mathscr K}_{1k}=\frac{1}{2}\mathcal I+\widehat{\mathscr K}_{1k}=\frac{1}{2}\mathcal I+\widehat{\mathcal D}_{1k}+\ri\eta\widehat{\mathcal S}_{1k},$$
	is obviously injective in $L^2(\partial\Omega_{1k})$ and $\widehat{\mathcal D}_{1k}$ and $\widehat{\mathcal S}_{1k}$ are integral operators with weakly singular kernels.
	
	For operators $\widetilde{\mathscr K}_{2 k}^{-1}, \widetilde{\mathscr K}_{3 k}^{-1}$, we need to introduce integral operators
	\begin{equation}
	\widehat{\mathscr D}_{\ell k}^0\phi_{\ell k}(\bm{x}):=\mathrm{p.f.}\int_{\partial \Omega_{\ell k}}\frac{\partial^2 G_{\kappa_0}(\bm{x},\bm{y})}{\partial \bm{n}(\bm{x})\partial \bm{n}(\bm{y})}\phi_{\ell k}(\bm{y})\mathrm{d}S_{\bm{y}},\quad\bs x\in\Omega_{\ell k},\;\; \ell=2, 3,
	\end{equation}
	where $\kappa_0$ is a picked wave number which is not an interior eigenvalue to the corresponding Dirichlet and Neumann problems. An important result is that their inverse $(\widehat{\mathscr D}_{\ell k}^0)^{-1}$ exist and are compact on $L^2(\partial\Omega_{\ell k})$ (cf. \cite{mikhlin1960linear,kress1989linear}). Then the boundary integral equations \eqref{localboundaryintegraleq} for $i=2, 3$ can be transformed into the equivalent forms
	\begin{equation}
	\begin{split}
	(\widehat{\mathcal D}_{2k}^0)^{-1}\Big(\frac{1}{2}\mathcal I-\ri\eta(\widehat{\mathcal D}_{2k}-\widehat{\mathcal D}_{2k}^0)-\widehat S_{2k}\Big)\phi_{2k}-\ri\eta\phi_{2k}=(\widehat{\mathcal D}_{2k}^0)^{-1}W_{2k},\\
	(\widehat{\mathcal D}_{3k}^0)^{-1}\Big(\frac{1-\ri\eta h}{2}\mathcal I-\ri\eta(\widehat{\mathcal D}_{3k}-\widehat{\mathcal D}_{3k}^0)-\widehat S_{3k}\Big)\phi_{3k}-\ri\eta\phi_{3k}=(\widehat{\mathcal D}_{3k}^0)^{-1}W_{3k}.
	\end{split}
	\end{equation}
	One can verify that $\widehat{\mathcal D}_{\ell k}-\widehat{\mathcal D}_{\ell k}^0, \ell=2, 3$ are integral operators with weakly singular kernels, so they are compact on $L^2(\partial\Omega_{\ell k}), \ell=2, 3$. Together with the compactness of $(\widehat{\mathscr D}_{\ell k}^0)^{-1}$ and $\widehat{\mathcal S}_{\ell k}$, we conclude that
	\begin{equation}
	\begin{split}
	(\widehat{\mathcal D}_{2k}^0)^{-1}\Big(\frac{1}{2}\mathcal I-\ri\eta(\widehat{\mathcal D}_{2k}-\widehat{\mathcal D}_{2k}^0)-\widehat S_{2k}\Big),\quad (\widehat{\mathcal D}_{3k}^0)^{-1}\Big(\frac{1-\ri\eta h}{2}\mathcal I-\ri\eta(\widehat{\mathcal D}_{3k}-\widehat{\mathcal D}_{3k}^0)-\widehat S_{3k}\Big),
	\end{split}
	\end{equation}
	are compact operators on $L^2(\partial\Omega_{\ell k}), \ell=2, 3$. Then, the boundedness of $\widetilde{\mathscr K}_{2k}^{-1}$ and $\widetilde{\mathscr K}_{3k}^{-1}$ can be obtained from Theorem \ref{boundedinverse} and the following representations
	\begin{equation}
	\begin{split}
	\widetilde{\mathscr K}_{2k}^{-1}=\Big[\mathcal I-(\ri\eta\widehat{\mathcal D}_{2k}^0)^{-1}\Big(\frac{1}{2}\mathcal I-\ri\eta(\widehat{\mathcal D}_{2k}-\widehat{\mathcal D}_{2k}^0)-\widehat S_{2k}\Big)\Big]^{-1}\big(\ri\eta\widehat{\mathcal D}_{2k}^0\big)^{-1},\\
	\widetilde{\mathscr K}_{3k}^{-1}=\Big[\mathcal I-(\ri\eta\widehat{\mathcal D}_{3k}^0)^{-1}\Big(\frac{1-\ri\eta h}{2}\mathcal I-\ri\eta(\widehat{\mathcal D}_{3k}-\widehat{\mathcal D}_{3k}^0)-\widehat S_{3k}\Big)\Big]^{-1}\big(\ri\eta\widehat{\mathcal D}_{3k}^0\big)^{-1}.
	\end{split}
	\end{equation}
	This ends the proof.
\end{proof}

From Theorem \ref{compactness} and Theorem \ref{boundednessinverse}, we conclude that $\mathscr B_{ij}{\mathscr K}_{\ell k}\widetilde{\mathscr K}_{\ell k}^{-1}$ are compact operators from $L^2(\partial\Omega_{\ell k})$ to $L^2(\partial\Omega_{ij})$. Now we consider $\mathbb K$ which is an operator on the product space
$$
\bm{L}^2(\partial\Omega):= L^2(\partial\Omega_{11})\times\cdots\times L^2(\partial\Omega_{1M_1})\times\cdots\times L^2(\partial\Omega_{31})\times\cdots\times L^2(\partial\Omega_{3M_3}),
$$
with the inner product
\begin{equation}\label{innerprod}
(\bm{u},\bm{v})_{\bm{L}^2(\partial\Omega)} := \sum\limits_{i=1}^3\sum_{j=1}^{M_i}(u_{ij}, v_{ij})_{L^2(\partial\Omega_{ij})} ,
\end{equation}
and the norm $||\bm{u}||_{\bm{L}^2(\partial\Omega)}^2 = (\bm{u},\bm{u})_{\bm{L}^2(\partial\Omega)}$. It is evident that $\bm{L}^2(\partial\Omega)$ is a Hilbert space.

\begin{theorem}\label{compactnessinvecspace}
	The operator $\mathbb{K}$ defined in \eqref{matrixform} is compact on the Hilbert space $\bm{L}^2(\partial\Omega)$.
\end{theorem}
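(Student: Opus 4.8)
The plan is to reduce the claim entirely to the block-level compactness already established, invoking the standard principle that a bounded operator on a \emph{finite} orthogonal product of Hilbert spaces is compact precisely when each of its blocks is compact. Concretely, writing $\bm{L}^2(\partial\Omega)=\bigoplus_{p} L^2(\partial\Omega_{p})$ with the index $p$ enumerating the pairs $(i,j)$ in the order used to define $\bs W$, I would introduce the canonical bounded embeddings $E_{p}\colon L^2(\partial\Omega_{p})\to\bm{L}^2(\partial\Omega)$ and projections $P_{q}\colon \bm{L}^2(\partial\Omega)\to L^2(\partial\Omega_{q})$, so that from the factorization \eqref{matrixform} one has the finite sum representation
\begin{equation*}
\mathbb{K}=\sum_{p}\sum_{q\neq p} E_{p}\,\big(\mathscr B_{p}\mathscr K_{q}\widetilde{\mathscr K}_{q}^{-1}\big)\,P_{q},
\end{equation*}
where $\mathscr B_{p}\mathscr K_{q}\widetilde{\mathscr K}_{q}^{-1}\colon L^2(\partial\Omega_{q})\to L^2(\partial\Omega_{p})$ is the $(p,q)$ block and the diagonal blocks vanish.

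The key steps, in order, are then: (i) recall from Theorem \ref{compactness} that $\mathscr B_{p}\mathscr K_{q}\colon L^2(\partial\Omega_{q})\to L^2(\partial\Omega_{p})$ is compact for $p\neq q$, using the $C^2$ regularity of the boundaries and the continuity of the relevant kernels off the diagonal; (ii) recall from Theorem \ref{boundednessinverse} that $\widetilde{\mathscr K}_{q}^{-1}$ is a bounded linear operator on $L^2(\partial\Omega_{q})$; (iii) conclude, since the composition of a compact operator with a bounded operator (on either side) is compact, that each block $\mathscr B_{p}\mathscr K_{q}\widetilde{\mathscr K}_{q}^{-1}$ is compact; (iv) note $E_{p}$ and $P_{q}$ are bounded, so each summand $E_{p}(\mathscr B_{p}\mathscr K_{q}\widetilde{\mathscr K}_{q}^{-1})P_{q}$ is a compact operator on $\bm{L}^2(\partial\Omega)$; (v) invoke that a finite sum of compact operators is compact to conclude $\mathbb{K}$ is compact on $\bm{L}^2(\partial\Omega)$.

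Essentially all the analytic work has been front-loaded into Theorems \ref{theoremcompact}--\ref{boundednessinverse} and the composition remark preceding the statement, so there is no genuine obstacle here; the only points requiring a little care are bookkeeping ones — verifying that the product-space inner product \eqref{innerprod} makes $E_{p}$ and $P_{q}$ bounded (indeed isometric/contractive) so that the compactness of the blocks transfers faithfully to operators on $\bm{L}^2(\partial\Omega)$, and checking that the sum over the finitely many index pairs $(p,q)$ with $q\neq p$ exactly reproduces the matrix product in \eqref{matrixform}. If one wishes to avoid the embedding/projection formalism altogether, an equivalent route is to argue directly: given a bounded sequence in $\bm{L}^2(\partial\Omega)$, its $L^2(\partial\Omega_{q})$-components are bounded, so by compactness of each block and a diagonal extraction over the finitely many indices one obtains a subsequence along which $\mathbb{K}$ converges in $\bm{L}^2(\partial\Omega)$; this is the same argument repackaged.
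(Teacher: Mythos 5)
Your proposal is correct and follows essentially the same route as the paper: both reduce the claim to the compactness of the off-diagonal blocks $\mathscr B_{ij}\mathscr K_{\ell k}\widetilde{\mathscr K}_{\ell k}^{-1}$ (via Theorems \ref{compactness} and \ref{boundednessinverse} and the compact-times-bounded rule) and then exploit that there are only finitely many blocks. The paper assembles the conclusion through the direct bounded-sequence/subsequence argument you describe as the ``repackaged'' alternative, and your sum-of-compacts formulation with embeddings and projections is an equivalent, slightly tidier packaging that also makes explicit the simultaneous subsequence extraction over all blocks, a point the paper's write-up passes over.
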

\begin{proof}
	For any bounded sequence $\bs v^{(n)}=(v_{11}^{(n)}, \cdots, v_{1M_1}^{(n)},\cdots,v_{31}^{(n)}, \cdots, v_{3M_3}^{(n)} )^{\rm T}, n=1, 2, \cdots, $ in $\bm{L}^2(\partial\Omega)$, denote by $\tilde{\bs v}^{(n)}=\mathbb{K}\bs v^{(n)}$. Then
	\begin{equation}
	\tilde{v}_{ij}^{(n)} = \sum_{k=1,k\neq j}^{M_i} \mathscr{B}_{ij}\mathscr{K}_{ik}\widetilde{\mathscr K}_{ik}^{-1}v_{ik}^{(n)}+\sum\limits_{\ell=1,\ell\neq k}^3\sum_{k=1}^{M_{\ell}} \mathscr{B}_{ij}\mathscr{K}_{\ell k}\widetilde{\mathscr K}_{\ell k}^{-1}v_{\ell k}^{(n)}.
	\end{equation}
	For each scatterer $\Omega_{\ell k}$,  $\{v_{\ell k}^{(n)}\}_{n=1}^{\infty}$ is a bounded sequence in $L^2(\partial\Omega_{\ell k})$ and $\mathscr{B}_{ij}\mathscr{K}_{\ell k}\widetilde{\mathscr K}_{\ell k}^{-1}$ are compact operators from $L^2(\partial\Omega_{\ell k})$ to $L^2(\partial\Omega_{ij})$. Thus, each sequence $\{\mathscr{B}_{ij}\mathscr{K}_{\ell k}\widetilde{\mathscr K}_{\ell k}^{-1}v_{\ell k}^{(n)}\}_{n=1}^{\infty}$ in $L^2(\partial\Omega_{ij})$ has a convergent subsequence if $\Omega_{ij}$ and $\Omega_{\ell k}$ are different scatterers. Denote the convergent subsequence by $\{v_{ij,\ell k}^{(n)}\}_{n=1}^{\infty}$.
	Then the sequence
	$$\hat v_{ij}^{(n)}=\sum_{k=1,k\neq j}^{M_i} v_{ij,ik}^{(n)}+\sum\limits_{\ell=1,\ell\neq k}^3\sum_{k=1}^{M_{\ell}} v_{ij,\ell k}^{(n)},\quad n=1, 2, \cdots$$
	defined as the finite sum of convergent sequences is a convergent subsequence of $\{\tilde{\bs v}^{(n)}\}_{n=1}^{\infty}$ in $\bs L^2(\partial\Omega)$. This complete the proof of the compactness of operator $\mathbb{K}$ on $\bm{L}^2(\partial\Omega)$.
\end{proof}

According to the spectral theorem for compact operator (cf. \cite{Yosida1978Functional}), $\mathcal{I}+\mathbb{K}$ has a countable sequence of eigenvalues with $1$ being the only possible accumulation point. This means the set of eigenvalues $\lambda_j$ for which $|\lambda_j-1|>\rho$ for any $\rho<1$ is finite. Then the convergence of GMRES iteration method for equation \eqref{conciseeq} can be concluded from the following result (cf. \cite{campbell1996gmres} Proposition 6.1): 
\begin{theorem}\label{gmresconvergence}
	Given a system of linear equations $(I+\mathscr {A})\bs w=\bm{b}$ where $\mathscr A$ is a compact linear operator. Let the eigenvalues of $\mathcal{I}+\mathscr{A}$ be numbered so that $|\lambda_j-1|\geqslant |\lambda_{j+1}-1|$, for $j\geqslant 1$. Given $\rho>0$, determine $0\leqslant M <\infty$ so that $\{\lambda_j\}_{j=1}^M\subset \{z: |z-1|>\rho\}$, are the outliers and  $\{\lambda_j\}_{j\geqslant M+1}\subset \{z: |z-1|<\rho\}$
	is cluster. Define the distance of the outliers from the cluster as
	$$
	\delta := \max_{|z-1|=\rho}\max_{1\leqslant j \leqslant M}\frac{|\lambda_j - z|}{|\lambda_j|}.
	$$
	Then for any $\bm{b}$, and $\bs w_0$
	$$
	\|\bs r_{d+k}\| \leqslant C_{\delta}\rho^k\|\bs r_0\|,
	$$
	where $\bs r_k$ is the residual generate by GMRES iteration at $k$th step, and the constant $C_\delta$ is independent of $k$.
\end{theorem}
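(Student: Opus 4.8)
The plan is to combine the optimality of GMRES with the Riesz--Schauder spectral structure of $\mathcal I+\mathscr A$, following the reasoning of \cite{campbell1996gmres}. Write $A:=\mathcal I+\mathscr A$ on the underlying Hilbert space $\bs X$ (which is $\bm L^2(\partial\Omega)$ for \eqref{conciseeq}), let $\mathcal P_m$ denote the polynomials of degree at most $m$, and recall the standard residual bound
\begin{equation*}
\|\bs r_m\|=\min_{p\in\mathcal P_m,\;p(0)=1}\|p(A)\bs r_0\|\le\|p(A)\bs r_0\|\quad\text{for every admissible }p.
\end{equation*}
Hence it suffices to exhibit, for the prescribed splitting of $\sigma(A)$, one admissible polynomial of degree $d+k$ whose action on $\bs r_0$ is of size $O(\rho^k)$. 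Since $\mathscr A$ is compact, Riesz--Schauder theory tells us that $\sigma(A)$ consists of (finitely or) countably many eigenvalues whose only possible accumulation point is $1$ --- precisely the structure invoked after Theorem \ref{compactnessinvecspace} --- and that $0\notin\sigma(A)$ because $A$ is invertible.

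The first step is to separate the outliers from the cluster by a Riesz spectral projection. For the fixed $\rho>0$, which we take so that $\Gamma_\rho:=\{z:|z-1|=\rho\}$ avoids $\sigma(A)$ (possible since the eigenvalues accumulate only at $1$), let $\lambda_1,\dots,\lambda_M$ be the eigenvalues in $\{|z-1|>\rho\}$ and let $P$ be the associated spectral projection. This gives an $A$-invariant decomposition $\bs X=\bs X_1\oplus\bs X_2$, with $\bs X_1=\operatorname{ran}P$ finite dimensional, $\sigma(A|_{\bs X_1})=\{\lambda_1,\dots,\lambda_M\}$, and $\sigma(A|_{\bs X_2})\subset\{|z-1|<\rho\}$. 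I would then take
\begin{equation*}
p_{d+k}(z):=\tilde q(z)\,s_k(z),\qquad \tilde q(z):=\frac{m(z)}{m(0)},\qquad s_k(z):=(1-z)^k,
\end{equation*}
where $m$ is the minimal polynomial of $A|_{\bs X_1}$ and $d:=\deg m$ (so $d=M$ when the outliers are semisimple). Then $\tilde q(0)=1$, so $p_{d+k}(0)=1$ and $p_{d+k}\in\mathcal P_{d+k}$ is admissible.

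Now decompose $\bs r_0=P\bs r_0+(\mathcal I-P)\bs r_0$. The first component is annihilated because $(\tilde q\, s_k)(A|_{\bs X_1})=\tilde q(A|_{\bs X_1})s_k(A|_{\bs X_1})=0$ by the choice of $m$, so only $\tilde q(A)s_k(A)(\mathcal I-P)\bs r_0$ remains, and for it I would use the Riesz--Dunford functional calculus along $\Gamma_\rho$:
\begin{equation*}
\tilde q(A)\,s_k(A)\,(\mathcal I-P)=\frac{1}{2\pi\ri}\oint_{\Gamma_\rho}\tilde q(z)\,s_k(z)\,(zI-A)^{-1}(\mathcal I-P)\,\mathrm d z.
\end{equation*}
On $\Gamma_\rho$ one has $|s_k(z)|=\rho^k$, while $\sup_{\Gamma_\rho}|\tilde q(z)|$ is controlled by $\delta$ --- for the simple-root normalization $\sup_{\Gamma_\rho}\prod_j|\lambda_j-z|/|\lambda_j|\le\delta^M$ --- and $(zI-A)^{-1}$ is continuous, hence bounded, on the compact set $\Gamma_\rho$, which is disjoint from $\sigma(A)$. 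The standard contour estimate (arclength $2\pi\rho$, divided by $2\pi$) then yields
\begin{equation*}
\|\bs r_{d+k}\|\le\|p_{d+k}(A)\bs r_0\|\le\rho\,\Big(\sup_{\Gamma_\rho}|\tilde q|\Big)\Big(\sup_{\Gamma_\rho}\|(zI-A)^{-1}(\mathcal I-P)\|\Big)\,\rho^{k}\,\|\bs r_0\|=:C_\delta\,\rho^{k}\,\|\bs r_0\|,
\end{equation*}
with $C_\delta$ independent of $k$, which is the asserted estimate.

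The main obstacle is non-normality: because $A$ need not be self-adjoint or even diagonalizable on $\bs X$, one cannot simply replace $\|p(A)\|$ by $\sup_{\sigma(A)}|p|$, and this is exactly what forces the argument through the spectral-projection splitting together with the contour-integral functional calculus. The two points to check carefully are that $\Gamma_\rho$ genuinely separates the outliers from the cluster --- which rests on the accumulation-only-at-$1$ property, itself a consequence of the compactness established in Theorem \ref{compactnessinvecspace} --- and that $0\notin\sigma(A)$, which legitimizes the normalization $\tilde q(0)=1$; the remaining bookkeeping of constants is routine. The identical argument applies to \eqref{conciseeqheter} once $\mathbb K'-\mathbb S'\mathbb K'$ is shown to be compact.
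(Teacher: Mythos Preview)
The paper does not prove this theorem; it quotes it as Proposition~6.1 of \cite{campbell1996gmres} and uses it as a black box. So there is no ``paper's own proof'' to compare against. Your proposal is a faithful reconstruction of the argument behind that cited result: GMRES optimality reduces the task to building one admissible polynomial, Riesz--Schauder theory furnishes the finite-dimensional outlier projection, the minimal polynomial of $A|_{\bs X_1}$ annihilates that part, and the Dunford contour integral over $\Gamma_\rho$ controls the cluster part with the factor $\rho^k$. The identification of $d$ as $\deg m$ (left implicit in the theorem statement) and the computation $\sup_{\Gamma_\rho}|\tilde q|\le\delta^M$ for semisimple outliers are both correct. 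The only caveat is that if some outlier eigenvalue has a nontrivial Jordan block, then $d>M$ and the bound on $\tilde q$ along $\Gamma_\rho$ involves powers of the individual factors $|\lambda_j-z|/|\lambda_j|$ rather than just $\delta^M$; this does not affect the conclusion since the constant is still independent of $k$, but you should say so explicitly if you want the argument to cover the general case.
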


Next, we consider the convergence of the proposed iterative method for the multiple scattering problem in locally inhomogeneous media. Following the same proof for homogeneous media case, we can verify that $\mathbb K^{\prime}$ is compact on Hilbert space
\begin{equation}
\bs L^2(\Gamma):=L^2(\Gamma^{11})\times\cdots\times L^2(\Gamma^{1M_1})\times\cdots\times L^2(\Gamma^{31})\times\cdots\times L^2(\Gamma^{3M_3}).
\end{equation}
Therefore, we now focus on the operator $\mathbb S^{\prime}\mathbb K^{\prime},$ which consists of $\mathscr{S}_{ij}^{\prime}\mathscr{T}_{ij}^{\prime}\mathscr{K}_{\ell k}^{\prime}\widetilde{\mathscr{K}}_{\ell k}^{\prime -1}$.
\begin{lemma}\label{compactboundintegral}
	Suppose $\Gamma^{ij}$ and $\Gamma^{\ell k}$ are different artificial boundaries. Then $(\mathscr K_{\ell k}^{\prime}\phi_{\ell k})(\bs x): L^2(\Gamma^{\ell k})\rightarrow H^{\frac{1}{2}}(\Gamma^{ij})$, $\bs x\in\Gamma^{ij}$ is a compact operator.
\end{lemma}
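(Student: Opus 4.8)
The plan is to exploit the hypothesis that $\Gamma^{ij}$ and $\Gamma^{\ell k}$ are two \emph{distinct}, non-intersecting circles: this forces the kernel of $\mathscr K'_{\ell k}$ to be smooth once the evaluation point is restricted to $\Gamma^{ij}$, and compactness into $H^{1/2}(\Gamma^{ij})$ then drops out by combining a smoothing estimate with a compact Sobolev embedding on the curve $\Gamma^{ij}$.

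First I would record the geometric fact that $d:=\mathrm{dist}(\Gamma^{ij},\Gamma^{\ell k})>0$, so that $|\bs x-\bs y|\ge d$ for all $\bs x\in\Gamma^{ij}$, $\bs y\in\Gamma^{\ell k}$. Writing the operator as
\[
(\mathscr K'_{\ell k}\phi_{\ell k})(\bs x)=\int_{\Gamma^{\ell k}}\mathcal K_{\ell k}(\bs x,\bs y)\,\phi_{\ell k}(\bs y)\,\mathrm{d}S_{\bs y},\qquad
\mathcal K_{\ell k}(\bs x,\bs y):=\frac{\partial G_{\kappa}(\bs x,\bs y)}{\partial\bs n(\bs y)}+\ri\eta\,G_{\kappa}(\bs x,\bs y),
\]
and recalling $G_{\kappa}(\bs x,\bs y)=-\frac{\ri}{4}H_0^{(1)}(\kappa|\bs x-\bs y|)$ together with the fact that $H_0^{(1)}(z)$ and $H_1^{(1)}(z)$ are analytic for $z\ne 0$, the restricted kernel $\mathcal K_{\ell k}$ is $C^{\infty}$ (indeed real-analytic) jointly in $(\bs x,\bs y)$ on the compact set $\Gamma^{ij}\times\Gamma^{\ell k}$, since the argument $\kappa|\bs x-\bs y|$ is bounded away from $0$ there and the unit normal $\bs n(\bs y)$ on the circle $\Gamma^{\ell k}$ is smooth. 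In particular, every tangential derivative of $\mathcal K_{\ell k}$ in the $\bs x$-variable, up to any prescribed order, is bounded on $\Gamma^{ij}\times\Gamma^{\ell k}$.

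Next I would convert this into a regularity gain. Parametrising $\Gamma^{ij}$ by arclength and differentiating under the integral sign --- legitimate because of the uniform bounds just obtained --- the Cauchy--Schwarz inequality gives, for each integer $m\ge 0$ and every $\phi_{\ell k}\in L^2(\Gamma^{\ell k})$,
\[
\|\mathscr K'_{\ell k}\phi_{\ell k}\|_{H^{m}(\Gamma^{ij})}\le C_m\,\|\phi_{\ell k}\|_{L^2(\Gamma^{\ell k})},
\]
where $C_m$ depends only on $m$, on $\kappa$ and $\eta$, and on the geometry of the two circles. Taking $m=1$ shows that $\mathscr K'_{\ell k}:L^2(\Gamma^{\ell k})\to H^{1}(\Gamma^{ij})$ is bounded, and since $\Gamma^{ij}$ is a smooth compact curve the embedding $H^{1}(\Gamma^{ij})\hookrightarrow H^{1/2}(\Gamma^{ij})$ is compact (Rellich--Kondrachov). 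Writing $\mathscr K'_{\ell k}:L^2(\Gamma^{\ell k})\to H^{1/2}(\Gamma^{ij})$ as the composition of the bounded operator into $H^{1}(\Gamma^{ij})$ with this compact embedding gives the assertion.

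I do not expect a genuine analytic obstacle: the single place where the hypothesis is essential is the smoothness of $\mathcal K_{\ell k}$, which would fail on the diagonal and hence for an on-surface operator such as $\widehat{\mathscr K}'_{ij}$ (whose kernel is only weakly singular). The remaining steps --- differentiation under the integral sign, the uniform bounds on $\mathcal K_{\ell k}$ and its $\bs x$-derivatives, and the appeal to the compact embedding --- are routine, so the only point that needs care is checking that the constants $C_m$ are independent of $\phi_{\ell k}$.
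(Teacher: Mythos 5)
Your proof is correct, and it reaches the conclusion by a slightly different mechanism than the paper. Both arguments hinge on the same key observation that the two artificial boundaries are disjoint compact sets, so the kernel $\frac{\partial G_{\kappa}(\bs x,\bs y)}{\partial\bs n(\bs y)}+\ri\eta\,G_{\kappa}(\bs x,\bs y)$ is smooth and uniformly bounded (together with all its $\bs x$-derivatives) on $\Gamma^{ij}\times\Gamma^{\ell k}$. Where you differ is in how compactness is produced: you prove a smoothing estimate $\|\mathscr K'_{\ell k}\phi_{\ell k}\|_{H^{1}(\Gamma^{ij})}\le C\|\phi_{\ell k}\|_{L^2(\Gamma^{\ell k})}$ by differentiating under the integral sign, and then invoke the compact embedding $H^{1}(\Gamma^{ij})\hookrightarrow H^{1/2}(\Gamma^{ij})$ on the closed curve, so the operator is compact as a bounded map followed by a compact embedding. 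The paper instead works at the level of function families: it shows that the image of a bounded set in $L^2(\Gamma^{\ell k})$ is bounded and equicontinuous, and that the same holds for the gradients, so by an Arzel\`a--Ascoli argument $\mathscr K'_{\ell k}:L^2(\Gamma^{\ell k})\to C^1(\Gamma^{ij})$ is compact, and compactness into $H^{1/2}(\Gamma^{ij})$ then follows from the continuous embedding of $C^1(\Gamma^{ij})$ into $H^{1/2}(\Gamma^{ij})$. Your route is arguably cleaner and more quantitative (it yields boundedness into $H^m(\Gamma^{ij})$ for every $m$, hence compactness into any Sobolev space on $\Gamma^{ij}$, which makes the smoothing nature of the off-diagonal operator explicit), while the paper's route avoids Sobolev embedding machinery on the curve and stays with classical uniform-continuity estimates; the only point to keep explicit in your version is the justification of differentiation under the integral sign, which you correctly base on the uniform bounds for the kernel derivatives.
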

\begin{proof}
	Let $U$ be a bounded set in $L^2(\Gamma^{\ell k})$, i.e., $\|\phi_{\ell k}\|_{L^2(\Gamma^{\ell k})}\leq C$ for all $\phi_{\ell k}\in U$ and some $C>0$. Then
	\begin{equation}
	\begin{split}
	|(\mathscr K_{\ell k}^{\prime}\phi_{\ell k})(\bs x)|\leq &\int_{\Gamma^{\ell k}}\Big|\frac{\partial G_{\kappa}(\bm{x},\bm{y})}{\partial \bm{n}(\bm{y})}\Big| |\phi_{\ell k}(\bm{y})|\mathrm{d}S_{\bm{y}}+|\eta|\int_{\Gamma^{\ell k}}|G_{\kappa}(\bm{x},\bm{y})| |\phi_{\ell k}(\bm{y})|\mathrm{d}S_{\bm{y}}\\
	\leq & C|\Gamma^{\ell k}|^{\frac{1}{2}}\max\limits_{\bm{y}\in \Gamma^{\ell k}}\Big(\Big|\frac{\partial G_{\kappa}(\bm{x},\bm{y})}{\partial \bm{n}(\bm{y})}\Big|+|\eta||G_{\kappa}(\bm{x},\bm{y})|\Big),
	\end{split}
	\end{equation}
	for all $\bs x\in\Gamma^{ij}$ and all $\phi_{\ell k}\in U$, i.e., $\mathscr K_{\ell k}^{\prime}U$ is bounded in maximum norm. Since $\frac{\partial G_{\kappa}(\bm{x},\bm{y})}{\partial \bm{n}(\bm{y})}$ and $G_{\kappa}(\bm{x},\bm{y})$ are uniformly continuous on the compact set $\Gamma^{ij}\times\Gamma^{\ell k}$, for every $\varepsilon>0$, there exists $\delta >0$ such that
	\begin{equation}
	\Big|\frac{\partial G_{\kappa}(\bm{x},\bm{z})}{\partial \bm{n}(\bm{z})}-\frac{\partial G_{\kappa}(\bm{y},\bm{z})}{\partial \bm{n}(\bm{z})}\Big|\leq\frac{\varepsilon}{2C|\Gamma^{\ell k}|^{\frac{1}{2}}},\quad |G_{\kappa}(\bm{x},\bm{z})-G_{\kappa}(\bm{y},\bm{z})|\leq\frac{\varepsilon}{2C|\eta||\Gamma^{\ell k}|^{\frac{1}{2}}},
	\end{equation}
	for all $\bs x, \bs y\in \Gamma^{ij}$, $\bs z\in \Gamma^{\ell k}$ with $|\bs x-\bs y|<\delta$. Then
	\begin{equation}
	|(\mathscr K_{\ell k}^{\prime}\phi_{\ell k})(\bs x)-(\mathscr K_{\ell k}^{\prime}\phi_{\ell k})(\bs y)|\leq\varepsilon
	\end{equation}
	for all $\bs x, \bs y\in \Gamma^{ij}$ with $|\bs x-\bs y|<\delta$ and all $\phi_{\ell k}\in U$, i.e., $\mathscr K_{\ell k}^{\prime}U$ is equicontinuous. By the smoothness of the  Green's function $G_{\kappa}(\bs x, \bs y)$ for $\bs x\neq \bs y$, we can further prove that $\{\nabla(\mathscr K_{\ell k}^{\prime}\phi_{\ell k}): \phi_{\ell k}\in U\}$ is bounded and  equicontinuous in the same way. Therefore $(\mathscr K_{\ell k}^{\prime}\phi_{\ell k})(\bs x): L^2({\Gamma^{\ell k}})\rightarrow C^1(\Gamma^{ij})$ are compact. Then the statement of this  lemma follows from the facts that $C^1(\Gamma^{ij})$ is dense in $H^{\frac{1}{2}}(\Gamma^{ij})$ and $C^1$-norm is stronger than $H^{\frac{1}{2}}$-norm.
\end{proof}

Together with the well-posedness of the reduced boundary value problem \eqref{localgeneralinterior}, we can draw the conclusion on the compactness of $\mathscr{S}_{ij}^{\prime}\mathscr{T}_{ij}^{\prime}\mathscr{K}_{\ell k}^{\prime}\widetilde{\mathscr{K}}_{\ell k}^{\prime -1}$.
\begin{theorem}\label{compactall}
	Suppose $\Gamma^{ij}$ and $\Gamma^{\ell k}$ are different artificial boundaries, then $$(\mathscr{S}_{ij}^{\prime}\mathscr{T}_{ij}^{\prime}\mathscr{K}_{\ell k}^{\prime}\widetilde{\mathscr{K}}_{\ell k}^{\prime -1}W_{\ell k})(\bs x): L^2(\Gamma^{\ell k})\rightarrow H^{1}(B_{ij}\setminus\Omega_{ij})$$
	 is a compact operator.
\end{theorem}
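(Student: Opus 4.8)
The plan is to view $\mathscr{S}_{ij}^{\prime}\mathscr{T}_{ij}^{\prime}\mathscr{K}_{\ell k}^{\prime}\widetilde{\mathscr{K}}_{\ell k}^{\prime -1}$ as a composition in which one factor is compact while all the others are bounded, and then to invoke the elementary fact that such a composition is compact. Two of the factors are bounded essentially for free: $\widetilde{\mathscr{K}}_{\ell k}^{\prime -1}$ is a bounded operator on $L^2(\Gamma^{\ell k})$ by the boundary integral equation theory recalled just before \eqref{newintegralformoutside}, and $\mathscr{S}_{ij}^{\prime}$ is bounded from $H^{-\frac12}(\Gamma^{ij})$ to $H^{1}(B_{ij}\setminus\overline{\Omega}_{ij})$ by Theorem \ref{interiorwellposed}. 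Lemma \ref{compactboundintegral} provides the compact factor: $\mathscr{K}_{\ell k}^{\prime}$ maps $L^2(\Gamma^{\ell k})$ compactly into $H^{\frac12}(\Gamma^{ij})$. Thus the only genuine work is to give meaning to the middle operator $\mathscr{T}_{ij}^{\prime}=\partial_{\bs n}-\mathscr{T}_{ij}$ on the range of $\mathscr{K}_{\ell k}^{\prime}$ and to check that it maps this range boundedly into $H^{-\frac12}(\Gamma^{ij})$, so that the compactness produced by $\mathscr{K}_{\ell k}^{\prime}$ is not destroyed.

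For this I would use that, because the circles $\Gamma^{ij}$ and $\Gamma^{\ell k}$ are disjoint, $\mathscr{K}_{\ell k}^{\prime}\phi_{\ell k}$ is not merely a function on $\Gamma^{ij}$ but a classical Helmholtz solution that is smooth in a whole neighbourhood of $\Gamma^{ij}$; in particular its Dirichlet trace and its normal-derivative trace on $\Gamma^{ij}$ are both well defined. Moreover, the bounds already established inside the proof of Lemma \ref{compactboundintegral} --- boundedness and equicontinuity on $\Gamma^{ij}$ of $\{\mathscr{K}_{\ell k}^{\prime}\phi_{\ell k}\}$ and of $\{\nabla(\mathscr{K}_{\ell k}^{\prime}\phi_{\ell k})\}$ as $\phi_{\ell k}$ ranges over a bounded subset of $L^2(\Gamma^{\ell k})$ --- show, via Arzel\`a--Ascoli, that $\phi_{\ell k}\mapsto\bigl(\mathscr{K}_{\ell k}^{\prime}\phi_{\ell k}|_{\Gamma^{ij}},\ \partial_{\bs n}(\mathscr{K}_{\ell k}^{\prime}\phi_{\ell k})|_{\Gamma^{ij}}\bigr)$ is compact from $L^2(\Gamma^{\ell k})$ into $C^{1}(\Gamma^{ij})\times C(\Gamma^{ij})$, hence into $H^{\frac12}(\Gamma^{ij})\times H^{-\frac12}(\Gamma^{ij})$. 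Combining this with the standard mapping property $\mathscr{T}_{ij}:H^{\frac12}(\Gamma^{ij})\to H^{-\frac12}(\Gamma^{ij})$ of the circular DtN operator --- which follows from the first-order growth $|\kappa H_n^{(1)\prime}(\kappa R_{ij})/H_n^{(1)}(\kappa R_{ij})|=O(|n|)$ of its Fourier symbol in \eqref{DtNmapping} --- one concludes that $\mathscr{T}_{ij}^{\prime}\mathscr{K}_{\ell k}^{\prime}\widetilde{\mathscr{K}}_{\ell k}^{\prime -1}$ is compact from $L^2(\Gamma^{\ell k})$ into $H^{-\frac12}(\Gamma^{ij})$.

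It then only remains to post-compose with $\mathscr{S}_{ij}^{\prime}$. The datum $\psi:=\mathscr{T}_{ij}^{\prime}\mathscr{K}_{\ell k}^{\prime}\widetilde{\mathscr{K}}_{\ell k}^{\prime -1}W_{\ell k}\in H^{-\frac12}(\Gamma^{ij})$ is precisely of the type for which Theorem \ref{interiorwellposed} yields a unique weak solution $v=\mathscr{S}_{ij}^{\prime}\psi\in H^{1}(B_{ij}\setminus\overline{\Omega}_{ij})$ with $\|v\|_{H^{1}(B_{ij}\setminus\overline{\Omega}_{ij})}\le C\|\psi\|_{H^{-\frac12}(\Gamma^{ij})}$; since the composition of a bounded operator with a compact one is compact, the claim follows. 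I expect the main obstacle to be exactly the treatment of $\mathscr{T}_{ij}^{\prime}$: the normal derivative $\partial_{\bs n}$ is not intrinsic to the curve $\Gamma^{ij}$, so one must first justify, using the well-separatedness of the artificial boundaries, that $\mathscr{K}_{\ell k}^{\prime}\phi_{\ell k}$ extends across $\Gamma^{ij}$ as a classical solution, and then verify that the normal-derivative trace inherits the boundedness and equicontinuity already proved in Lemma \ref{compactboundintegral} for $\nabla(\mathscr{K}_{\ell k}^{\prime}\phi_{\ell k})$; everything else is a routine concatenation of the mapping properties above. (Alternatively, since $\Gamma^{ij}\cap\Gamma^{\ell k}=\varnothing$, the kernel of $\mathscr{K}_{\ell k}^{\prime}$ is $C^{\infty}$ on $\Gamma^{ij}\times\Gamma^{\ell k}$, so $\mathscr{T}_{ij}^{\prime}\mathscr{K}_{\ell k}^{\prime}$ has a smooth kernel and is smoothing, which is in the spirit of Theorem \ref{theoremcompact}.)
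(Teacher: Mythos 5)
Your proof is correct and follows essentially the same route as the paper: compactness of $\mathscr{K}'_{\ell k}:L^2(\Gamma^{\ell k})\to H^{\frac12}(\Gamma^{ij})$ from Lemma \ref{compactboundintegral}, boundedness of $\widetilde{\mathscr{K}}_{\ell k}^{\prime -1}$ on $L^2(\Gamma^{\ell k})$, of $\mathscr{T}_{ij}^{\prime}$ into $H^{-\frac12}(\Gamma^{ij})$, and of $\mathscr{S}_{ij}^{\prime}$ via Theorem \ref{interiorwellposed}, with the conclusion by composing compact and bounded operators. Your additional care in giving meaning to the $\partial_{\bs n}$ part of $\mathscr{T}_{ij}^{\prime}$ on the range of $\mathscr{K}'_{\ell k}$ (smooth near $\Gamma^{ij}$ since the boundaries are disjoint) only tightens a step the paper dispatches with a one-line appeal to the trace theorem.
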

\begin{proof}
	According to the trace theorem in Sobolev space, the DtN operator $\mathscr{T}_{ij}^{\prime}$ is a bounded linear operator from $H^{\frac{1}{2}}(\Gamma^{ij})$ to $H^{-\frac{1}{2}}(\Gamma^{ij})$ (cf. \cite{nedelec2001acoustic,melenk2010convergence}). Moreover, the well-posedness of the boundary value problem \eqref{localgeneralinterior} implies that $\mathscr S^{\prime}_{ij}$ is a bounded operator from $H^{-\frac{1}{2}}(\Gamma^{ij})$ to $H^1(B_{ij}\setminus\Omega_{ij})$. Hence $\mathscr{S}_{ij}^{\prime}\mathscr{T}_{ij}^{\prime}$ is bounded from $H^{\frac{1}{2}}(\Gamma^{ij})$ to $H^1(B_{ij}\setminus\Omega_{ij})$.
	
	Note that $\widehat{\mathscr K}_{\ell k}^{\prime}$ is an integral operator with weakly singular kernel, i.e., compact on $L^2(\Gamma^{\ell k})$. By Theorem \ref{boundedinverse}, we conclude that $\widetilde{\mathscr{K}}_{\ell k}^{\prime -1}=\big(\frac{1}{2}\mathcal I+\widehat{\mathscr K}_{\ell k}^{\prime}\big)^{-1}$ is a bounded operator on $L^2(\Gamma^{\ell k})$. Together with the boundedness of $\mathscr{S}_{ij}^{\prime}\mathscr{T}_{ij}^{\prime}$ in Theorem \ref{interiorwellposed} and Lemma \ref{compactboundintegral}, we complete the proof.
\end{proof}

With the compactness of operators $\mathscr{S}_{ij}^{\prime}\mathscr{T}_{ij}^{\prime}\mathscr{K}_{\ell k}^{\prime}\widetilde{\mathscr{K}}_{\ell k}^{\prime -1}$, it is not difficult to verify the compactness of $\mathbb S^{\prime}\mathbb K^{\prime}$ on $\bs L^2(\Gamma)$ by following the same proof in Theorem \ref{compactnessinvecspace}. Then the convergence of the iterative method for equation \eqref{conciseeqheter} is ensured by Theorem \ref{gmresconvergence}.

\section{Numerical Examples}\label{sect5}
In this section, numerical examples are presented to show the performance of the proposed iterative algorithms.  The shape of the scatterers is  determined by the parametric form of the boundary curve: 
\begin{equation}\label{scattererpara}
r_i=a\sin k(\theta_i-\theta_0)+b,\quad \theta\in [0, 2\pi],
\end{equation}
where $(r_i,\theta_i)$ is the polar coordinate of $\bs x$ with respect to a given center $\bs c_i$.  In all experiments, we take the  plane wave $e^{{\ri}\kappa y}$ as the incident wave.

\subsection{Homogeneous media}
{\bf Example 1:} We first test the accuracy of {\bf Algorithm 1}. Consider two scatterers determined by \eqref{scattererpara} with $k=2, a=0.3, b=0.7, \theta_0 = \pi/4$, $\bs c_1(0, 0)$ and $\bs c_2(2.6, 0)$. The GMRES iteration is set to stop at the tolerance  $1.0\mathrm{e}$-11. Since the exact solution is not available, we use the numerical solution computed by spectral element discretization with polynomial of degree $p= 40$ as reference solution $u^{\rm ref}$. In the computation of the reference solution, we use a large artificial boundary to enclose the two scatterers inside (see Fig. \ref{multiscatter} (Right)) and impose non-reflecting boundary condition on it. Instead, small artificial boundaries are used for the iterative method, see Fig. \ref{multiscatter} (Left).

\begin{figure}[ht!]
	\centering
	\includegraphics[width=0.35\textwidth]{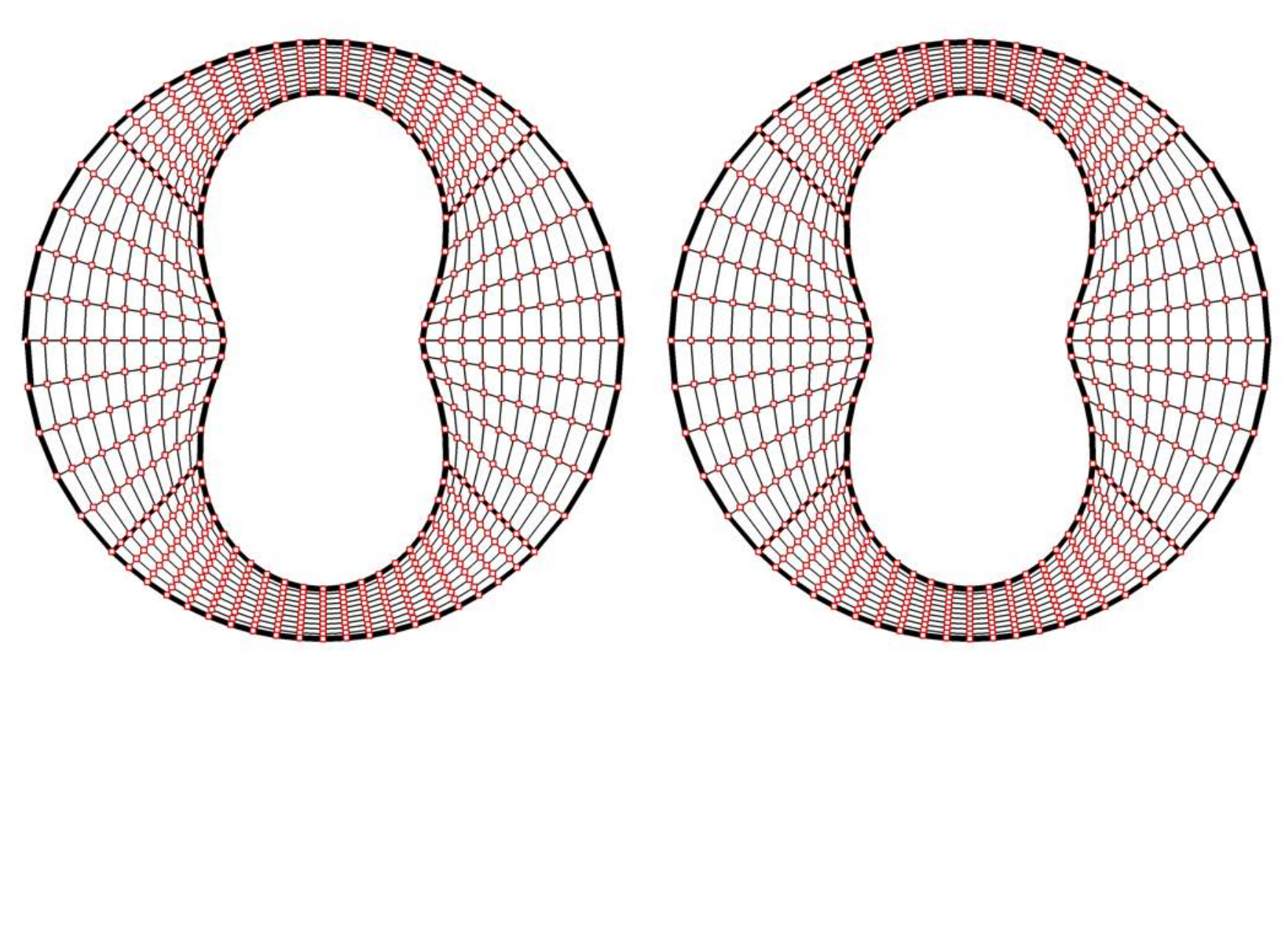}\qquad\qquad
	\includegraphics[width=0.35\textwidth]{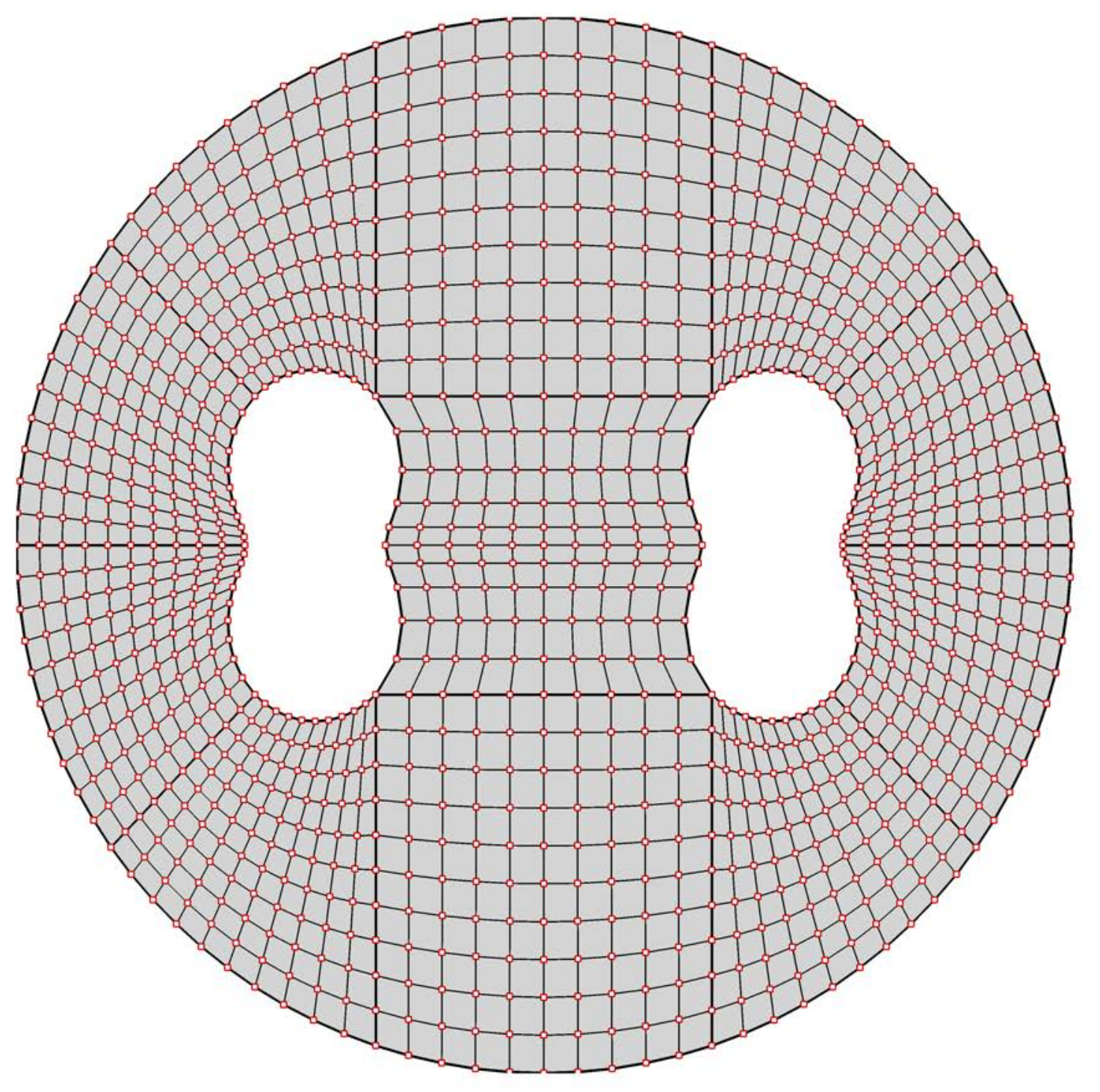}
	\caption {Left: Artificial boundaries and spectral element grid for iterative method, Right: Artificial boundary and spectral element grid for reference solution.}
	\label{multiscatter}
\end{figure}
\begin{figure}[ht!]
	\centering
	\subfigure[$\kappa=10$]{\includegraphics[scale=0.27]{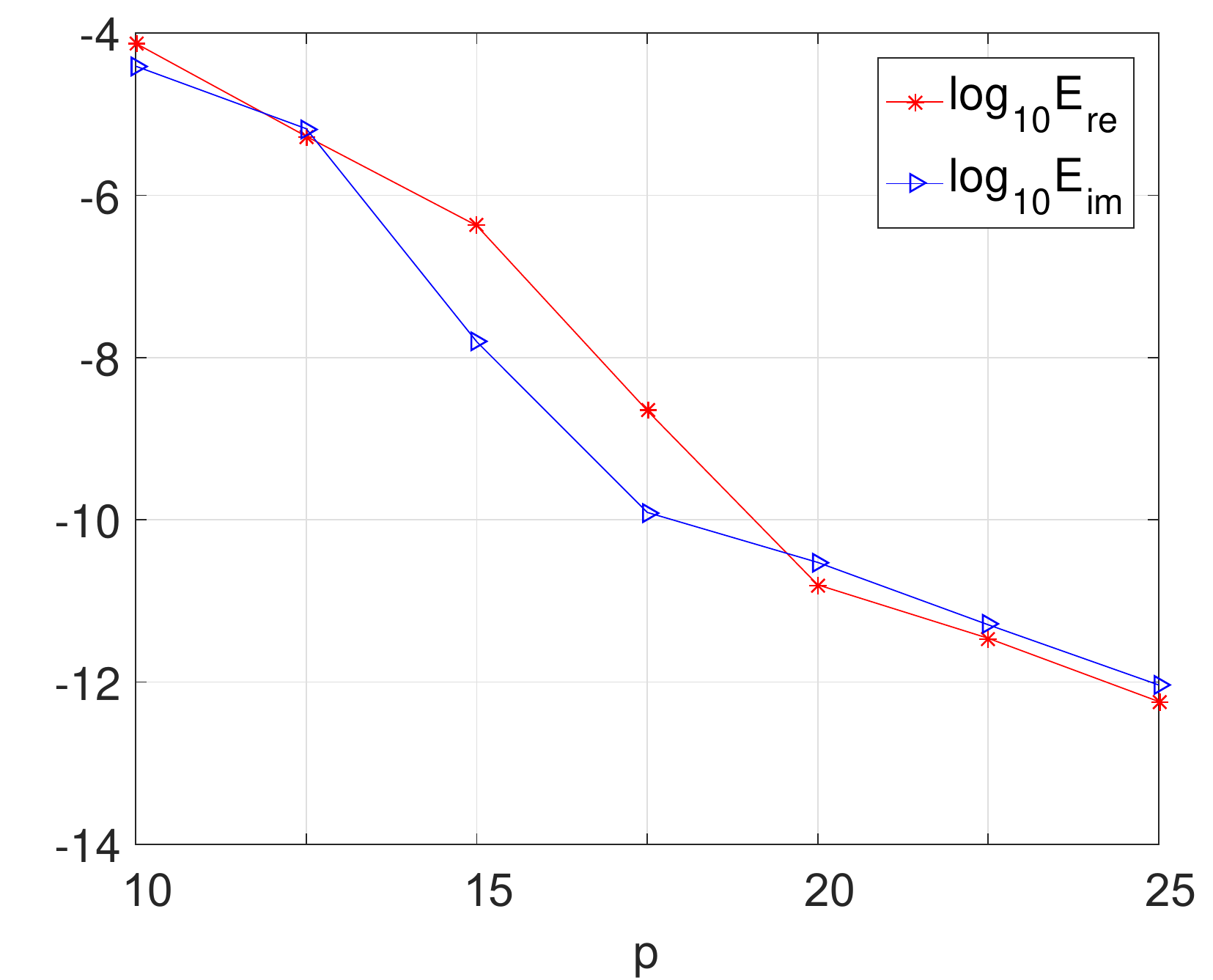}}
	\subfigure[$\kappa=20$]{\includegraphics[scale=0.27]{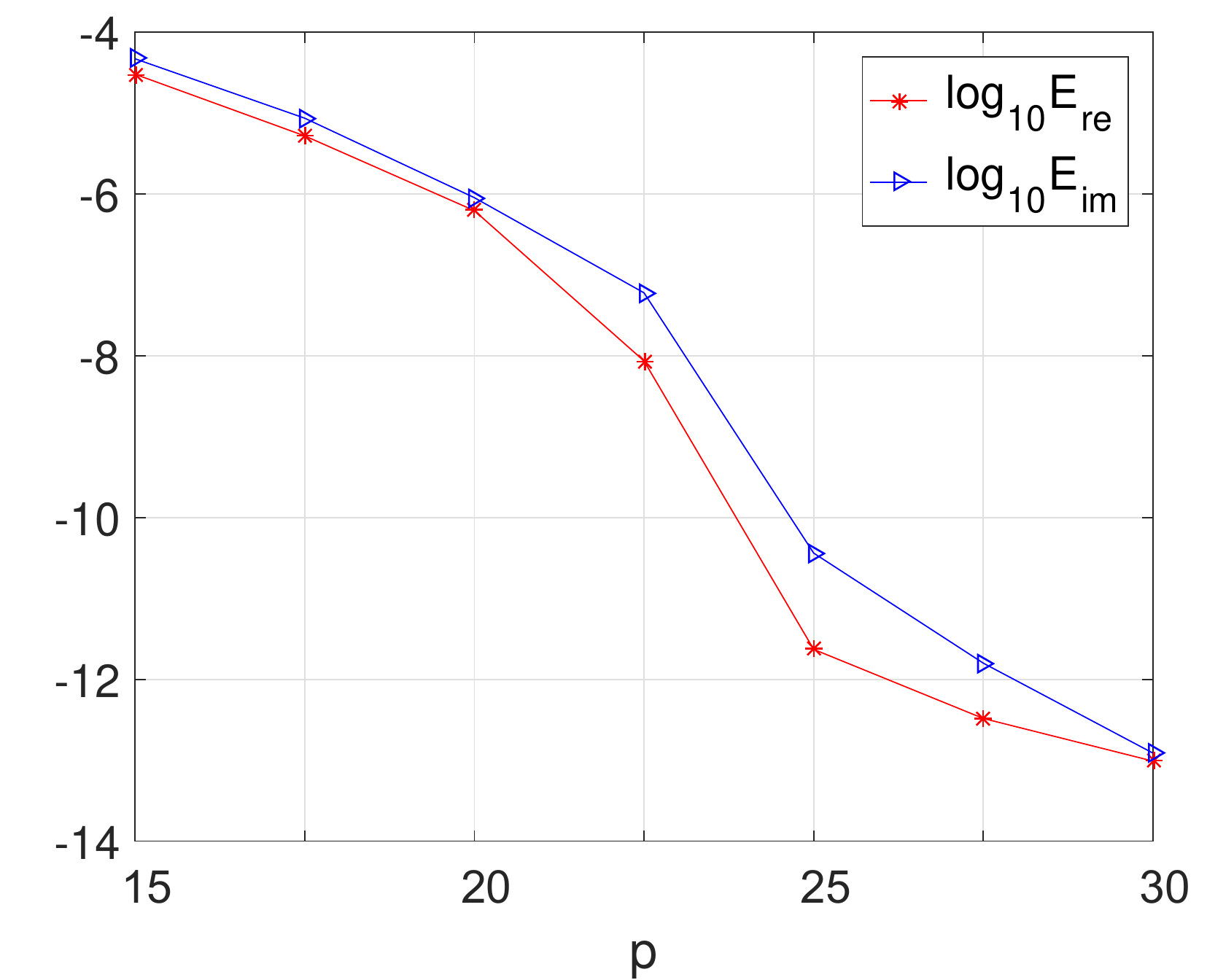}}
	\subfigure[$\kappa=30$]{\includegraphics[scale=0.27]{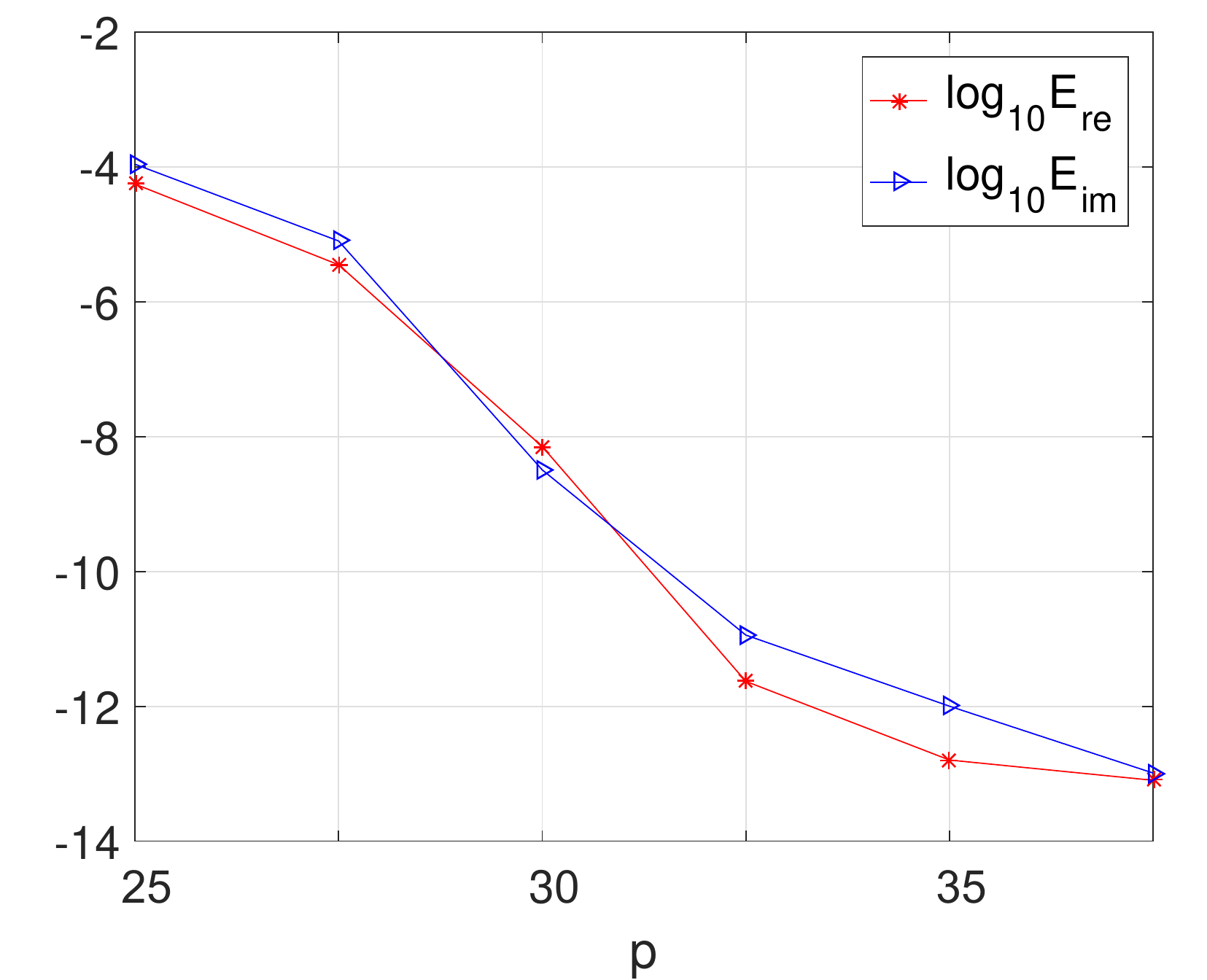}}
	\caption{Convergence rates in $L^2$-norm against polynomial degree $p$.} \label{L2errorplot}
\end{figure}
\begin{figure}[ht!]
	\centering
	\subfigure[$\kappa=10$]{\includegraphics[scale=0.27]{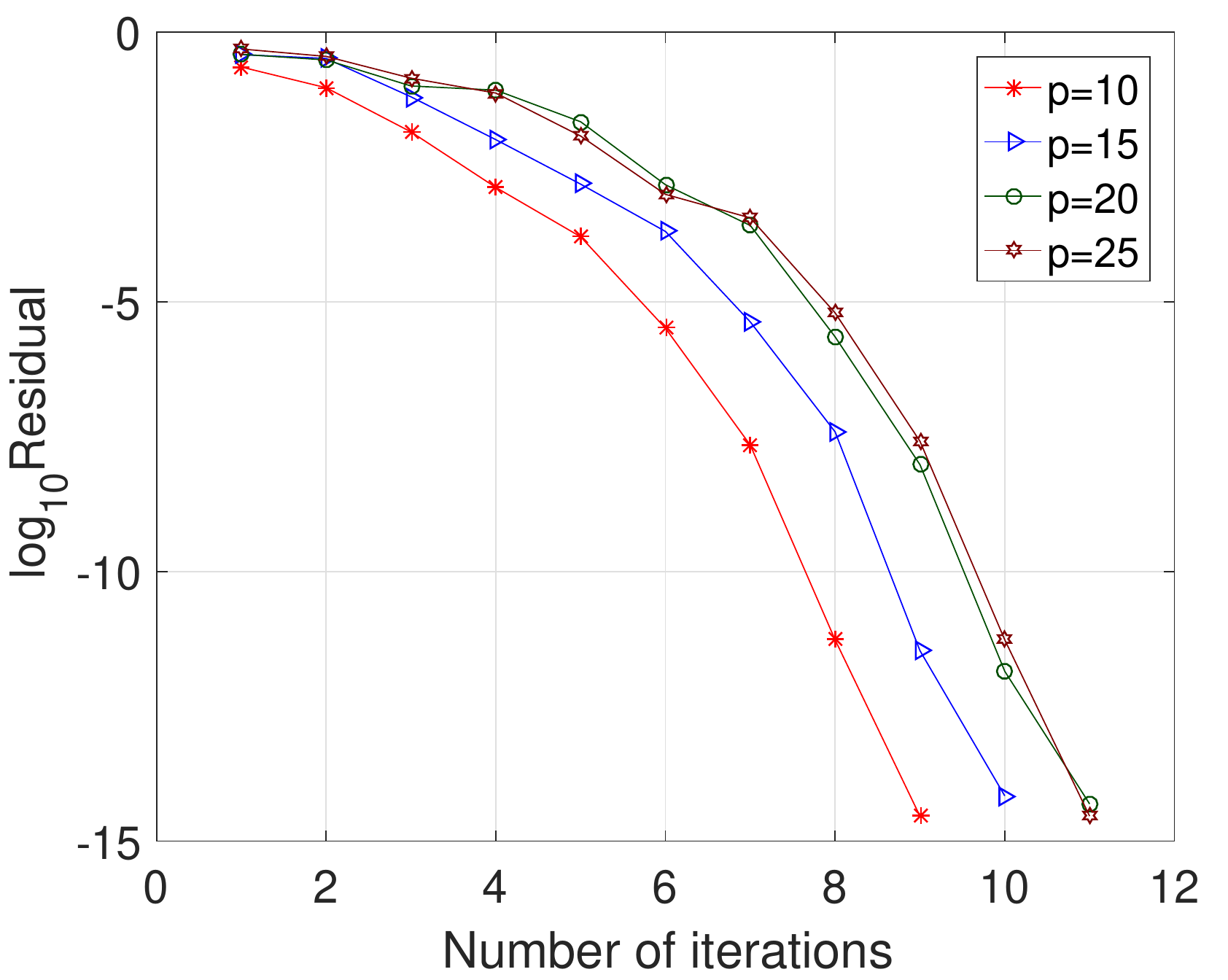}}
	\subfigure[$\kappa=20$]{\includegraphics[scale=0.27]{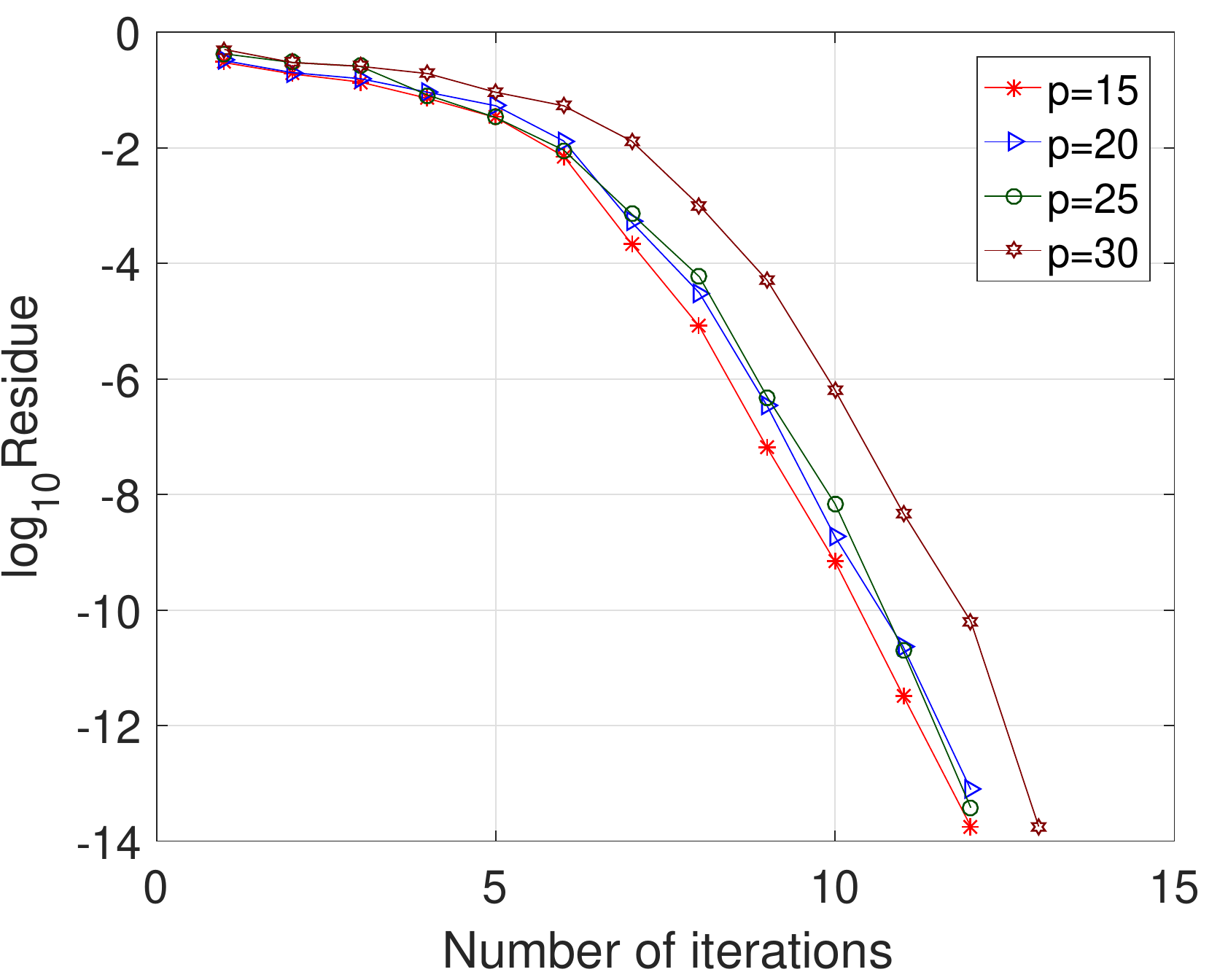}}
	\subfigure[$\kappa=30$]{\includegraphics[scale=0.27]{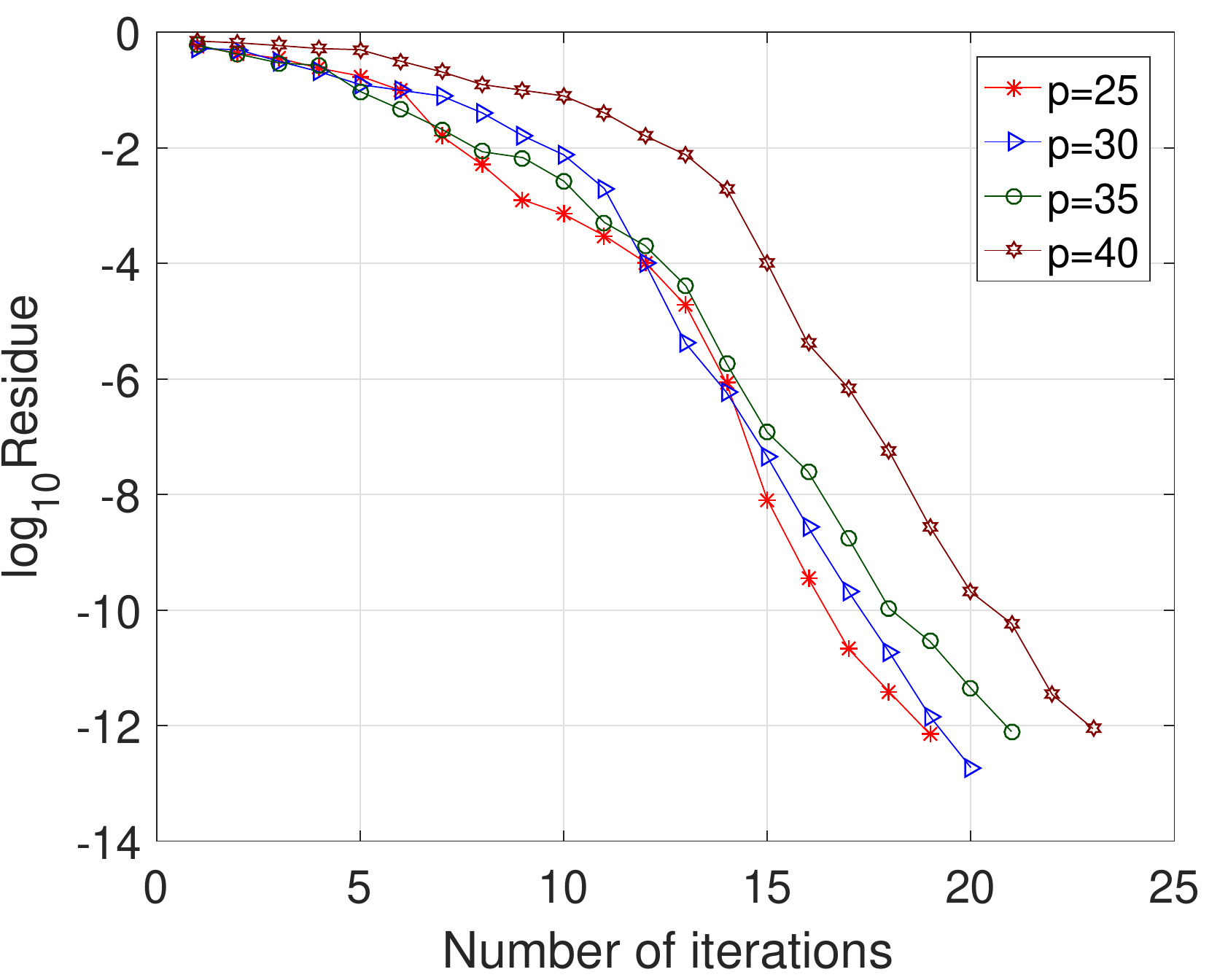}}
	\caption{Residuals against the number of iterations (homogeneous media).} \label{numberofiter}
\end{figure}

The approximate scattering field with polynomial degree $p=20$ for the case $\kappa=20$ is compared with the reference solution in Fig. \ref{scatteringwave2scatterersre} and Fig. \ref{scatteringwave2scatterersim}. Convergence rates in $L^2$-norm for cases with wavenumber $\kappa=10, 20, 30$ are plotted in Fig.  \ref{L2errorplot}. It shows that the iterative method has spectral accuracy with respect to polynomial degree $p$. In addition, we plot in Fig. \ref{numberofiter} the residuals against the number of iterations for different wave number $\kappa$ and polynomial degree $p$.  Clearly, we see that residuals achieve the machine accuracy in almost the same number of  iterations for different polynomial degree $p=10, 15, 20, 30$. That means the condition number of the iterative method is nearly independent of the degree of freedom used in the spectral element discretization.

To compare with the numerical method proposed in \cite{grote2004dirichlet}, we also adopt the SEM to discretize  the truncated problem \eqref{trun2dmscatprob} and obtain the linear system \eqref{blocksystem}. Then, the GMRES and block GMRES iterative method are applied to solve it. The iterations are set to stop at residual less than $1.0\mathrm{e}$-11.  We compare the number of iterations required by different methods in Table \ref{table1}. The numerical results show that our iterative method requires fewer iterations than numerical method proposed in \cite{grote2004dirichlet} combined with block GMRES iterative method for the resulted linear system. This implies that the use of purely outgoing components of the scattering field for the communication between scatterers is more efficient.
\begin{figure}[ht!]
	\centering
	\subfigure[iterative solution]{\includegraphics[scale=0.25]{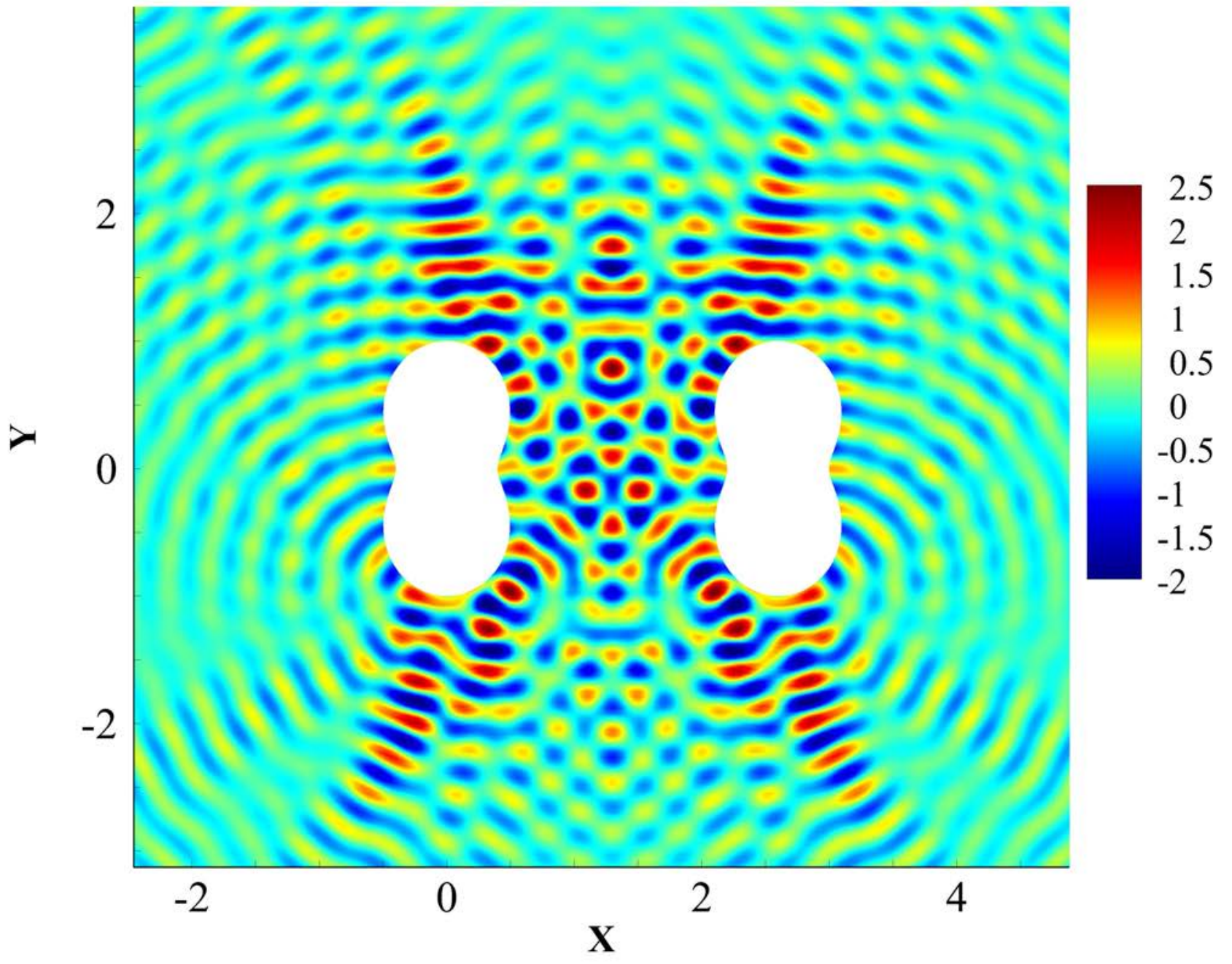}}\quad
	\subfigure[reference solution]{\includegraphics[scale=0.25]{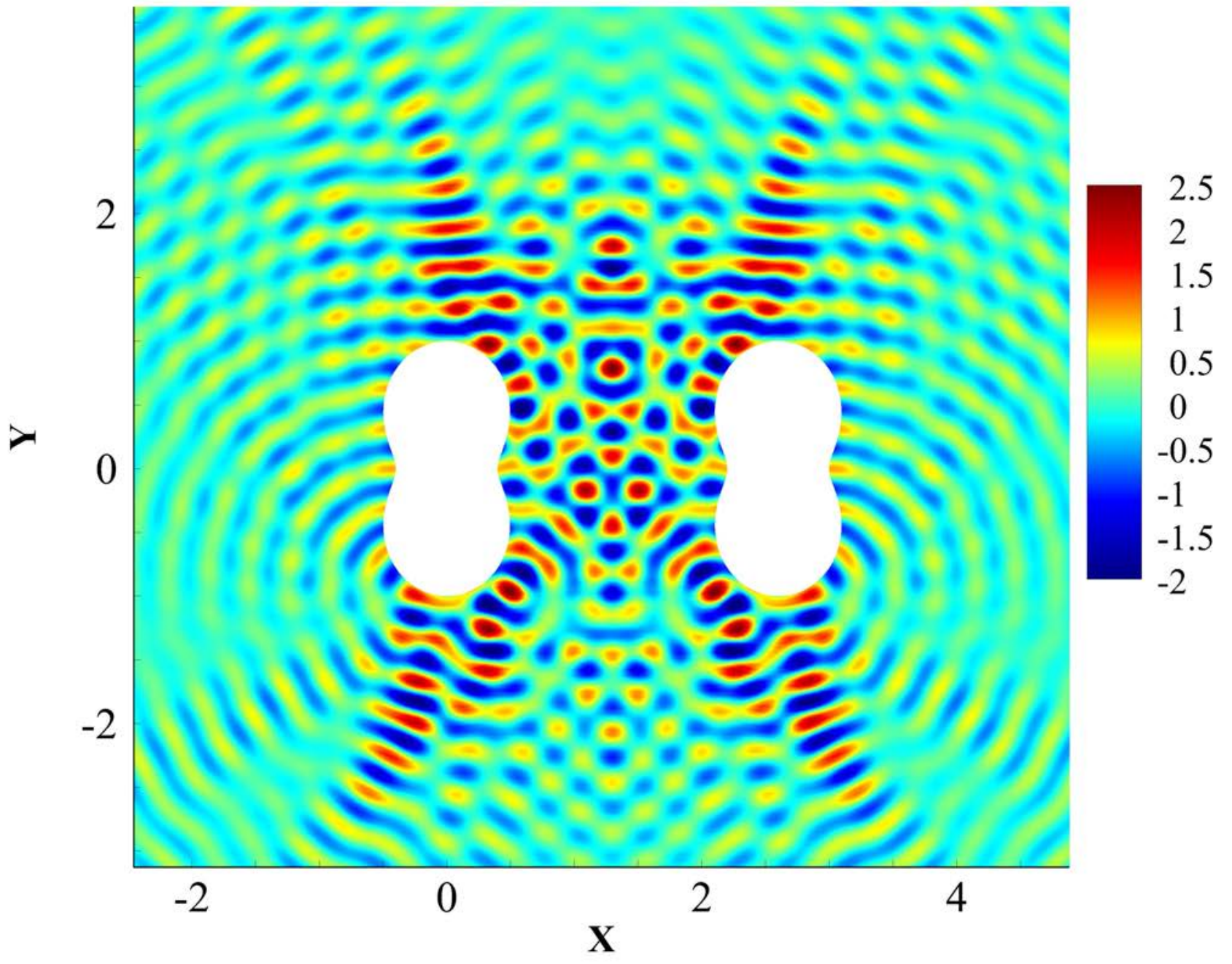}}
	\subfigure[error]{\includegraphics[scale=0.25]{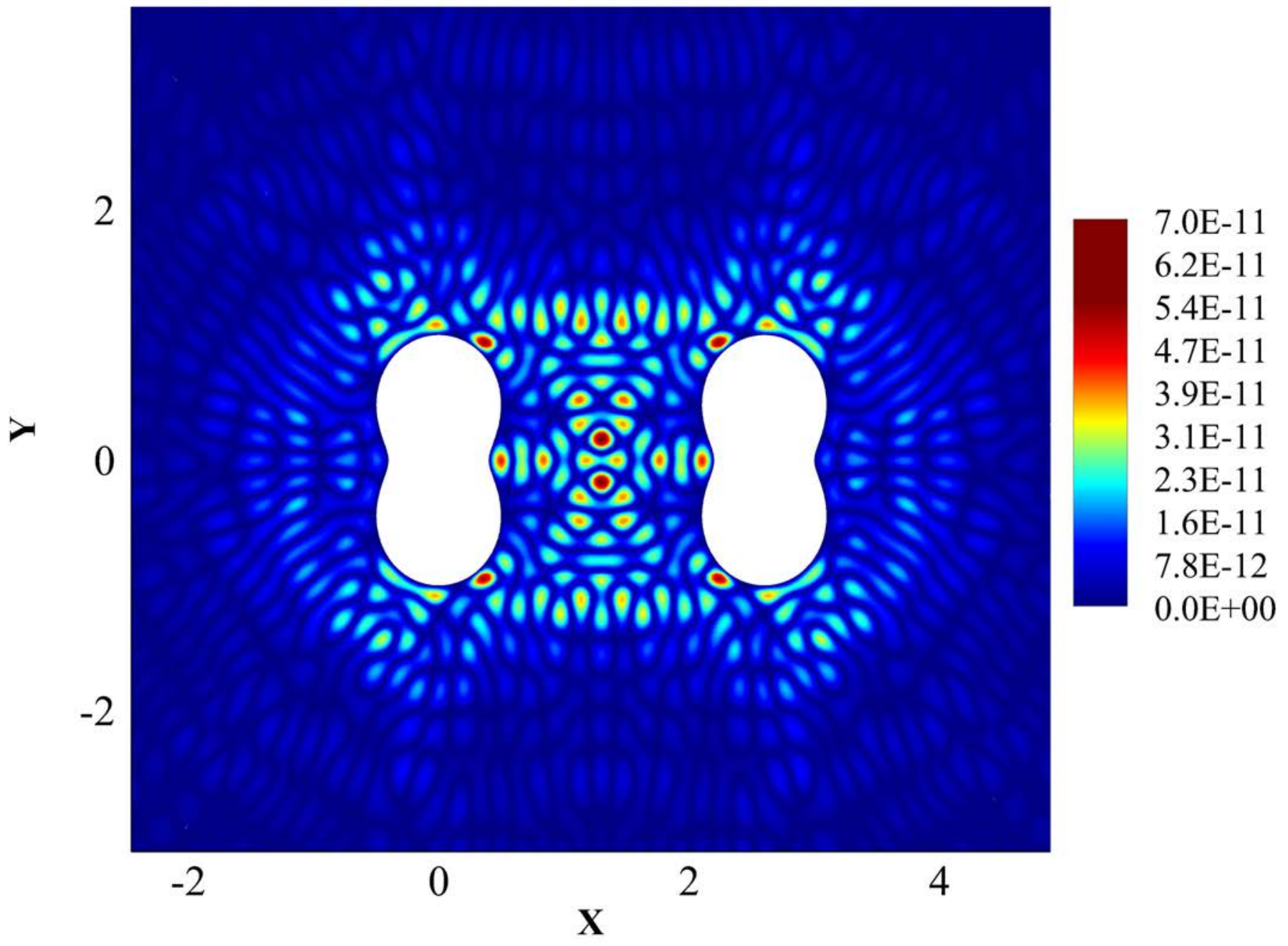}}
	\caption{Real parts of iterative numerical solution ($p=20$), reference solution and error for $\kappa=20$.}\label{scatteringwave2scatterersre}
\end{figure}
\begin{figure}[ht!]
	\centering
	\subfigure[iterative solution]{\includegraphics[scale=0.25]{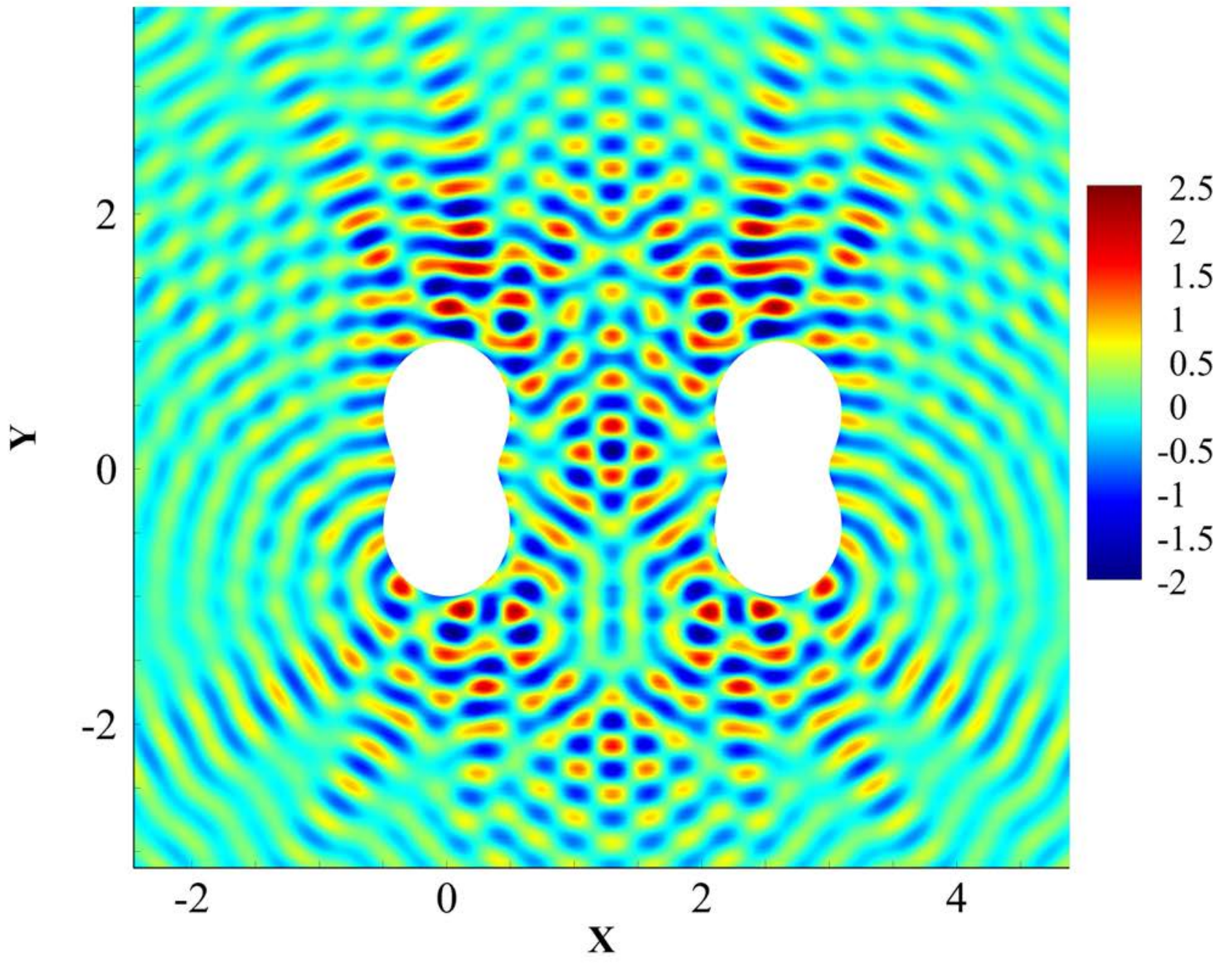}}\quad
	\subfigure[reference solution]{\includegraphics[scale=0.25]{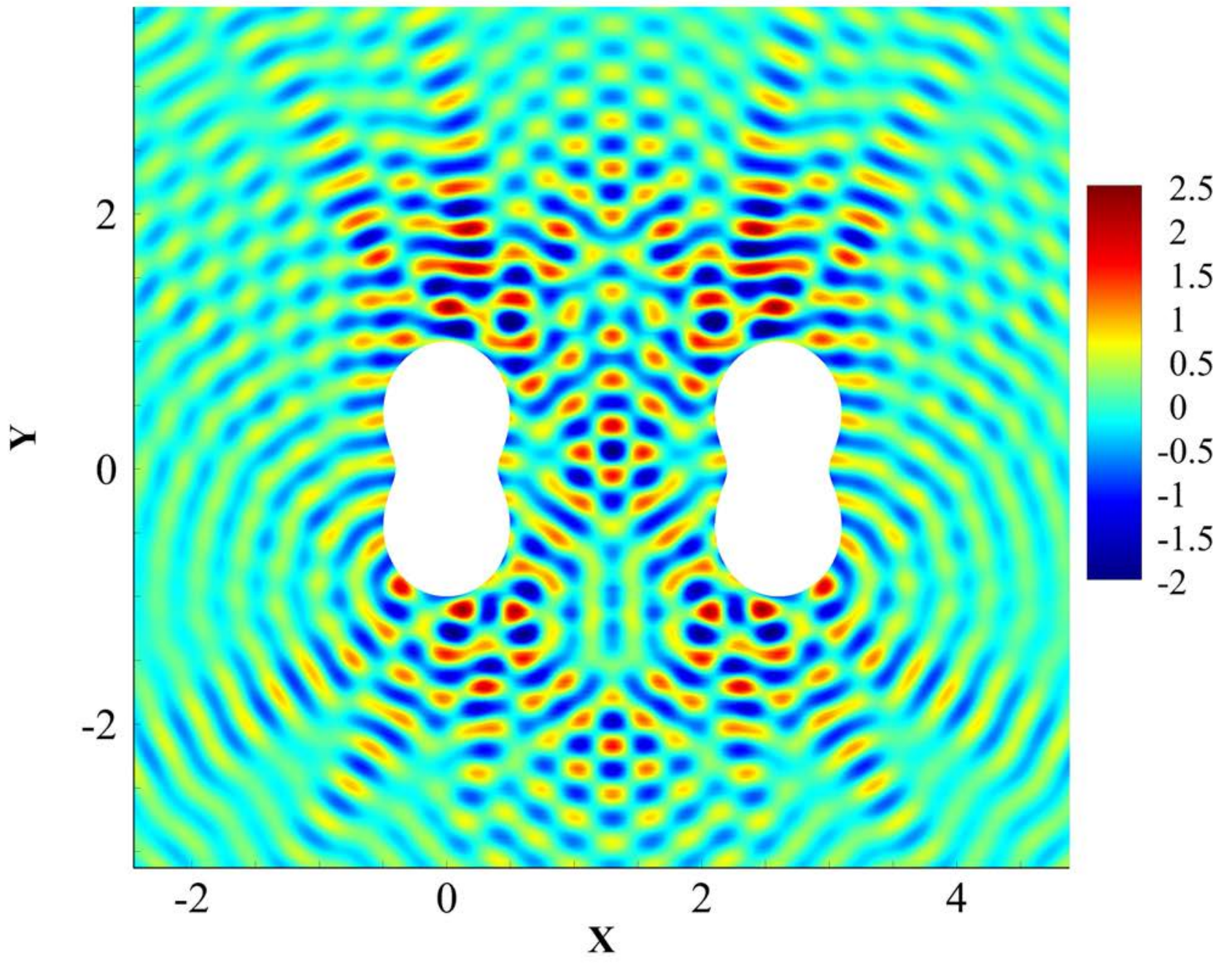}}\quad
	\subfigure[error]{\includegraphics[scale=0.25]{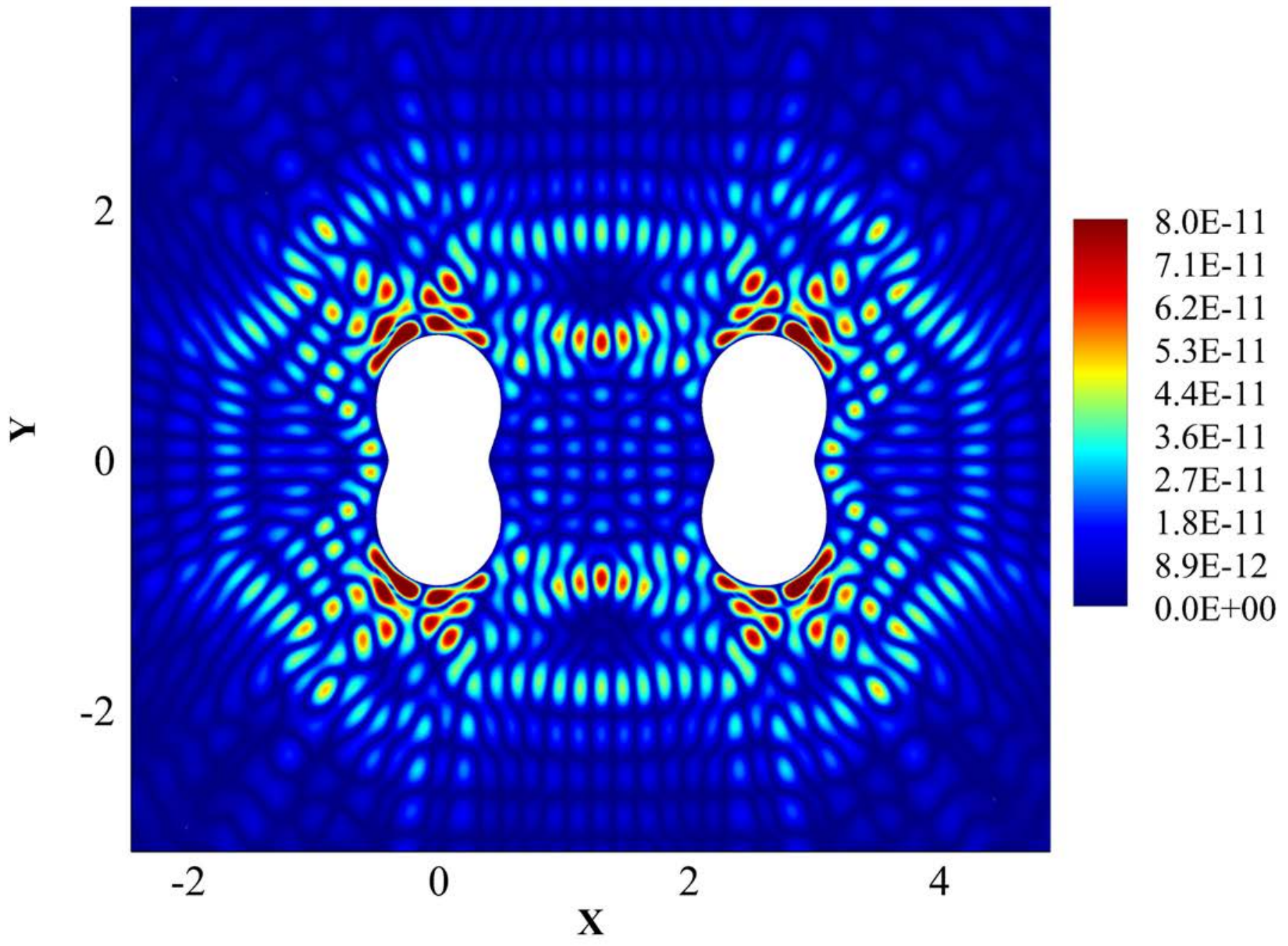}}
	\caption{Imaginary parts of iterative numerical solution ($p=20$), reference solution and error for $\kappa=20$.}\label{scatteringwave2scatterersim}
\end{figure}
{\color{blue}
\begin{table}[ht!]
	\center
	\begin{tabular}{|c|c|c|c|c|}
		\hline
		\multirow{2}{*}{$\kappa $} & \multirow{2}{*}{$p $} & \multicolumn{3}{c|}{Number of iterations} \\\cline{3-5}
		& & GMRES for \eqref{blocksystem} & Block GMRES for \eqref{blocksystem}& Our iterative algorithm\\
		\hline
		\multirow{4}{*}{$10$}  & $10$ & $554$ & $97$ & $9$ \\\cline{2-5}
		& $15$ & $1400$ & $106$ & $10$ \\\cline{2-5}
		& $20$ & $2744$ & $112$  & $11$  \\\cline{2-5}
		& $25$ & $4328$  & $117$  &  $11$ \\
		\hline
		\multirow{4}{*}{$20$}  & $15$ &$1009$  &$127$  & $12$ \\\cline{2-5}
		& $20$ & $1686$ & $147$ & $13$ \\\cline{2-5}
		& $25$ & $3807$ & $153$ & $13$ \\\cline{2-5}
		& $30$ & $5381$ & $208$ & $14$ \\
		\hline
	\end{tabular}
	\caption{The number of iterations  using different numerical methods for multiple scattering problem in homogeneous media.}\label{table1}
\end{table}}

{\bf Example 2:} As already discussed in Remark \ref{remark1}, our iterative method is able to solve the multiple scattering problem with the scatterers being  not well-separated. In this example, we consider two scatterers which are close to each other. The parametric expressions of the scatterers are  given by \eqref{scattererpara} with $k=2, a=0.3, b=0.7, \theta_0 = \pi/4$. The centers of the scatterers are set to be  $\bm{c}_1(0, 0)$ and $\bm{c}_2(1.1, 0.5)$. GMRES iteration is set to stop at residual less than $1.0\mathrm{e}$-11. Highly accurate approximation of the real part of the scattering field for the case $\kappa=20$ is plotted in Fig. \ref{multiscatterer49} (a). 
\smallskip 

{\bf Example 3:} Consider the multiple scattering problem with a large number of scatterers determined by \eqref{scattererpara} with $k = 5, a=0.2, b=0.7, \theta_0=0$. An array of sound soft (Dirichlet boundary condition) scatterers with centers located at the grid points $\{(2.2n, 2.2m)\}_{n,m=0}^6$ are tested. The real part of approximate scattering field is plotted in Fig. \ref{multiscatterer49} (b).
\begin{figure}[ht!]
	\centering
	\subfigure[2 not well separated scatterers ($p=20$, $\kappa=20$)]{\includegraphics[scale=0.39]{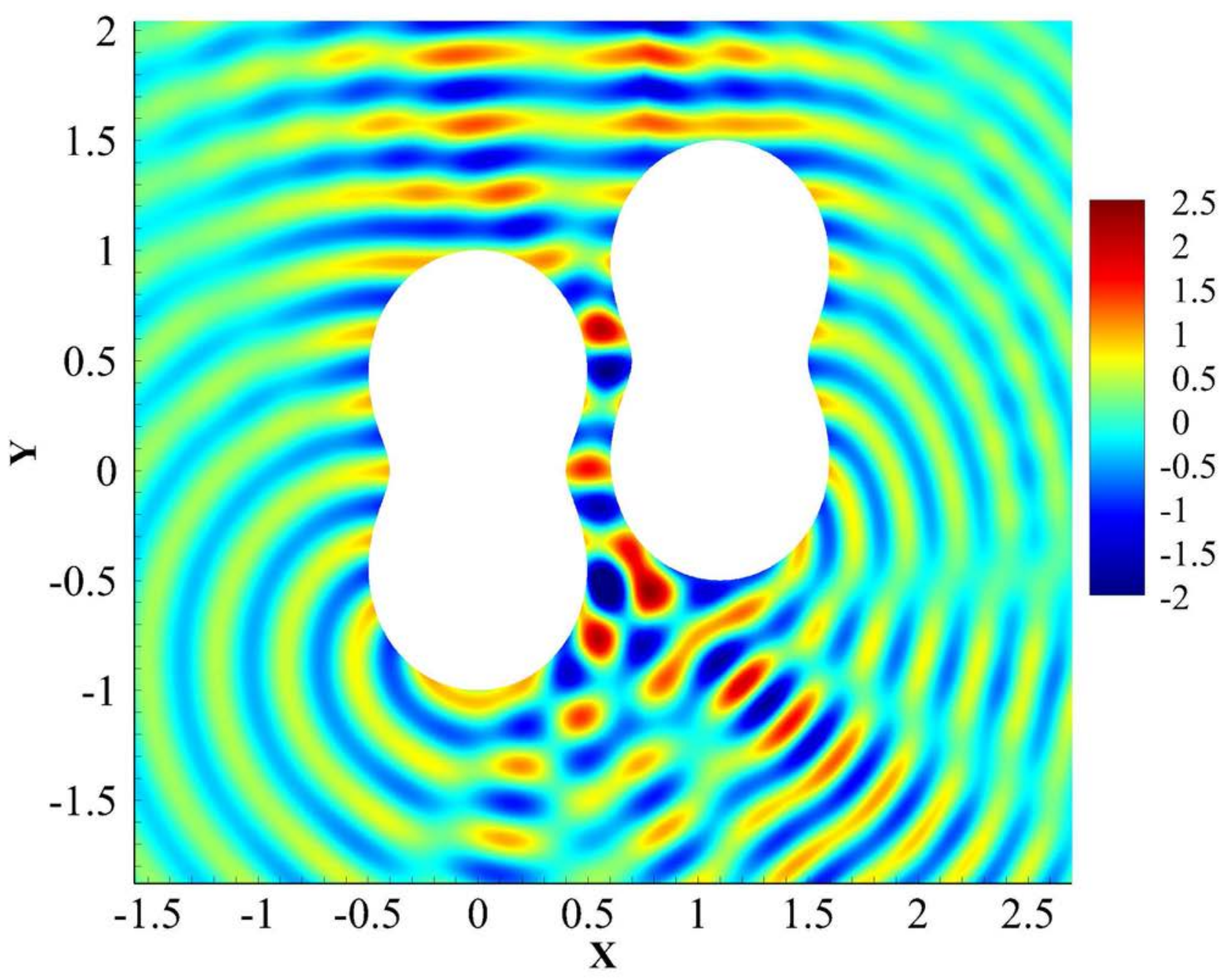}}\qquad
	\subfigure[49 well separated scatterers ($p=15$, $\kappa=10$) ]{\includegraphics[scale=0.34]{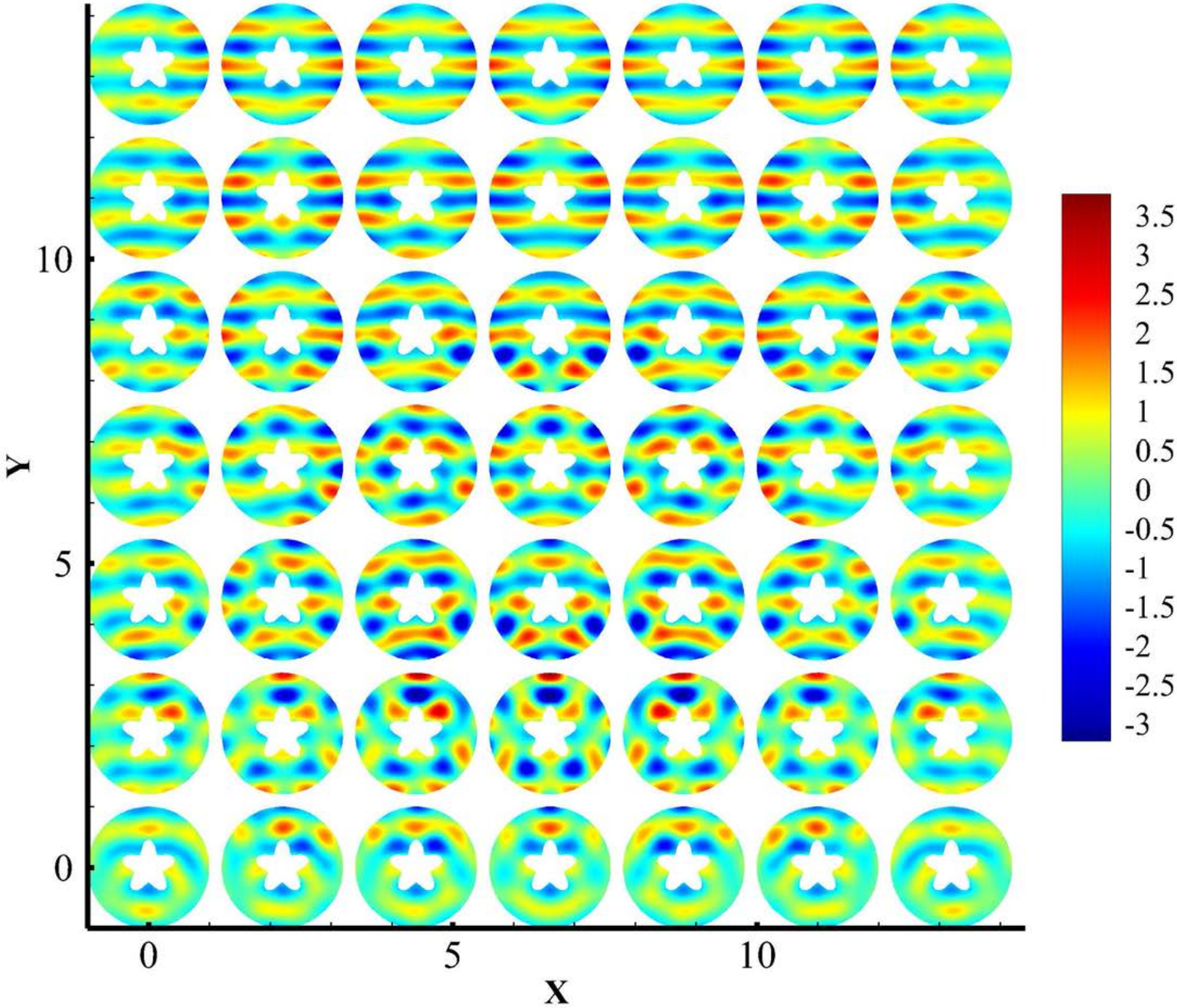}}
	\caption{Real parts of the approximate scattering fields due to sound soft scatterers.}\label{multiscatterer49}
\end{figure}

{\bf Example 4:} The scatterers can have different shapes and be randomly distributed. We plot the approximate scattering field due to 16 randomly distributed sound soft scatterers in Fig. \ref{multiscattererrandom8k20} (a). We also test  the problem with sound hard (Neumann boundary condition) scatterers, see the approximate scattering field plotted in Fig. \ref{multiscattererrandom8k20} (b).
\begin{figure}[ht!]
	\centering
	\subfigure[sound soft scatterers]{\includegraphics[scale=0.36]{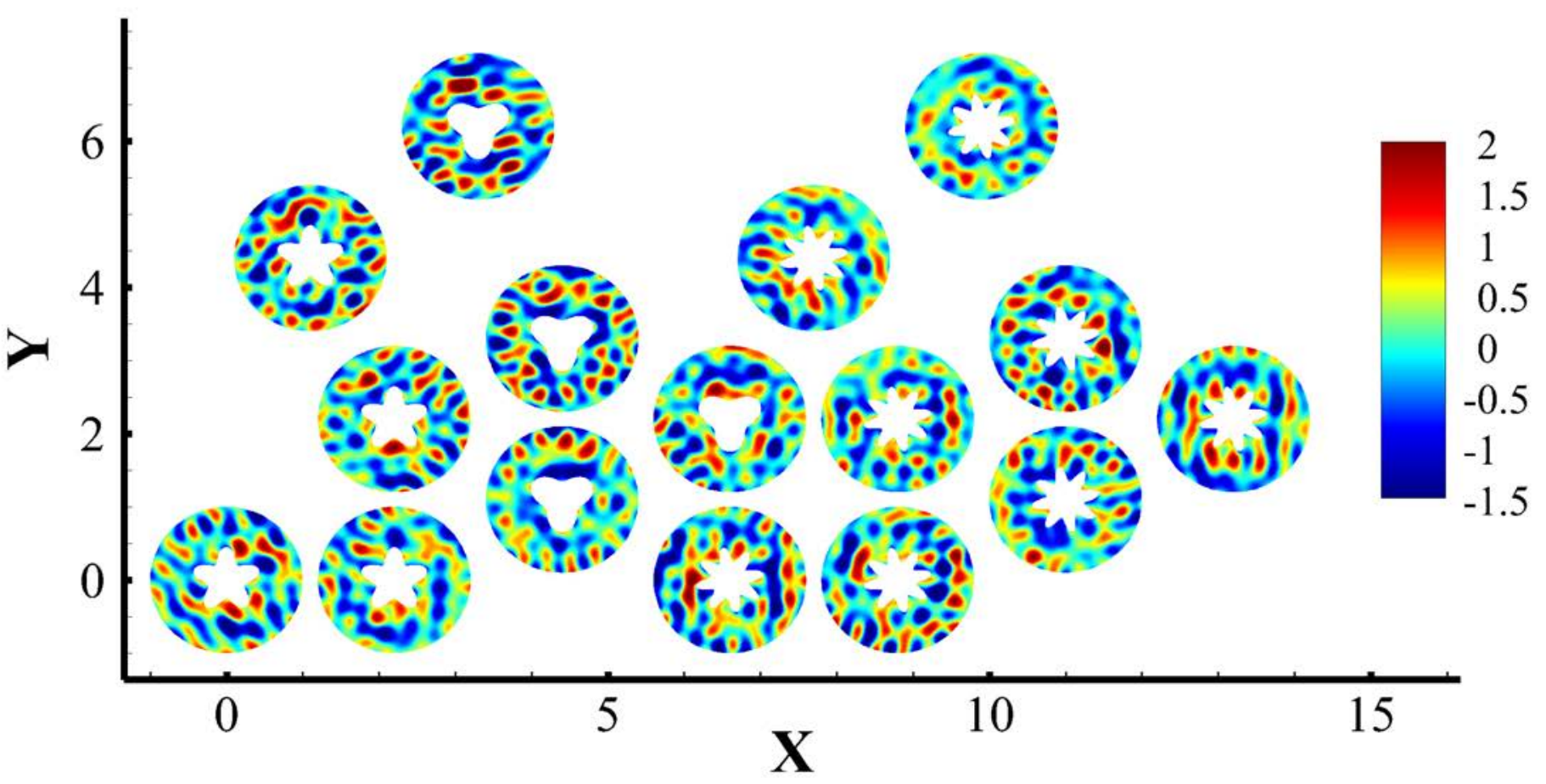}}\qquad
	\subfigure[sound hard scatterers]{\includegraphics[scale=0.36]{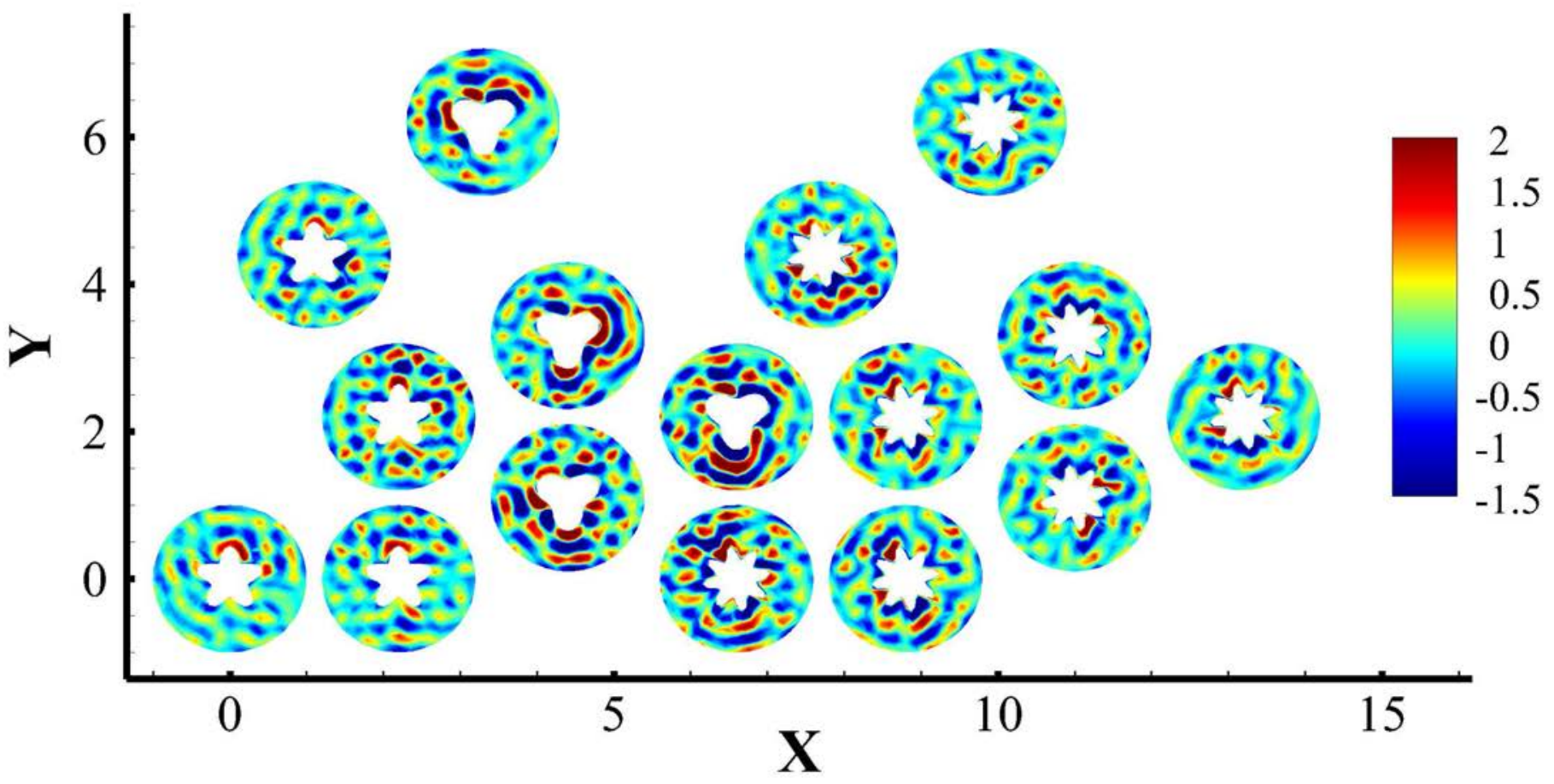}}
	\caption{Real parts of the approximate scattering fields ($p=20$) due to $16$  randomly distributed scatterers ($\kappa=20$).}\label{multiscattererrandom8k20}
\end{figure}

\subsection{Locally inhomogeneous media}
{\bf Example 5:} For accuracy test of {\bf Algorithm 2}, we consider the same two scatterers problem used in Example 1. All other settings are exactly the same as used in Example 1 except the locally inhomogeneous refraction index
\begin{equation}\label{heterrefractionindex1}
n(\bs x)=
\begin{cases}
{\rm exp}(-1/(1-16(|\bs x-\bs c_i|-1)^2))+1, &1.0<|\bs x-\bs c_i|<1.25,\\
1,& \mathrm{otherwise}.
\end{cases}
\end{equation}
It is a function of $|\bs x-\bs c_i|$ in the vicinity of the scatterer centered at $c_i$, see Fig. \ref{nr} (a).   $L^2$-errors of the numerical solutions and corresponding convergence rates are plotted in Fig. \ref{L2errorplotinhomo} and an approximate scattering field with $p=20$ for the case $\kappa=20$ are compared with reference solution in Figs. \ref{scatteringwave2scatterersreINHOMO} (real part). Results presented in Fig. \ref{L2errorplotinhomo} also show that the iterative method has spectral accuracy. From the decaying rates of residuals plotted in Fig. \ref{residueinhomo}, we see that they have similar decaying rates for different polynomial degree $p=10, 15, 20, 30$ in all tests. Further, the convergence rates of our iterative method and block GMRES iterative method together with numerical discretization proposed in \cite{grote2004dirichlet} are compared in Table \ref{table2}. All the iterations are set to stop at residual less than $1.0\mathrm{e}$-11 as before. As in the homogeneous media case, our iterative method requires much fewer iterations to achieve the given accuracy,  which further validates the fact that the way of using purely outgoing components of the scattering field for the communication between scatterers is more efficient.
\begin{figure}[ht!]
	\centering
	\subfigure[]{\includegraphics[scale=0.28]{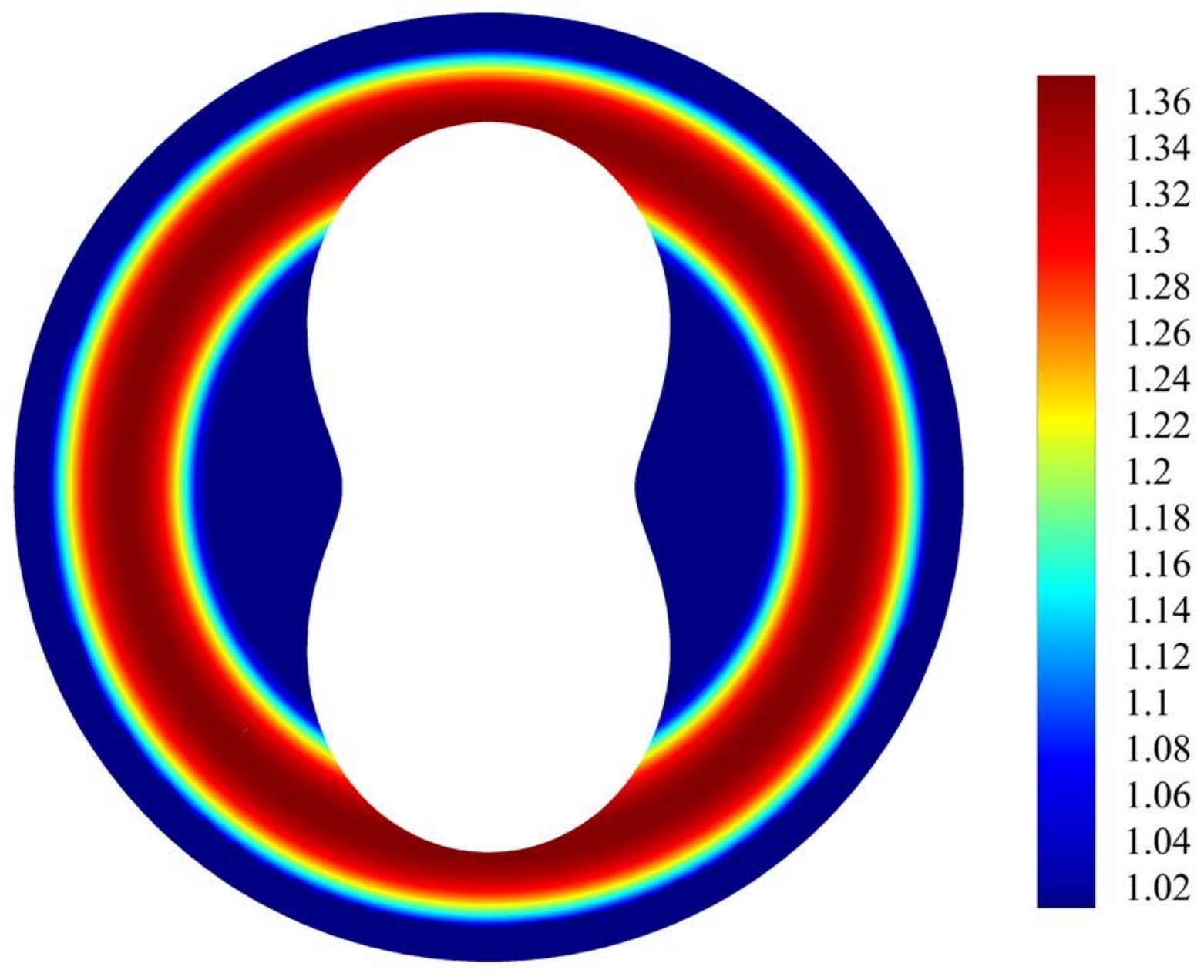}}\quad
	\subfigure[]{\includegraphics[scale=0.28]{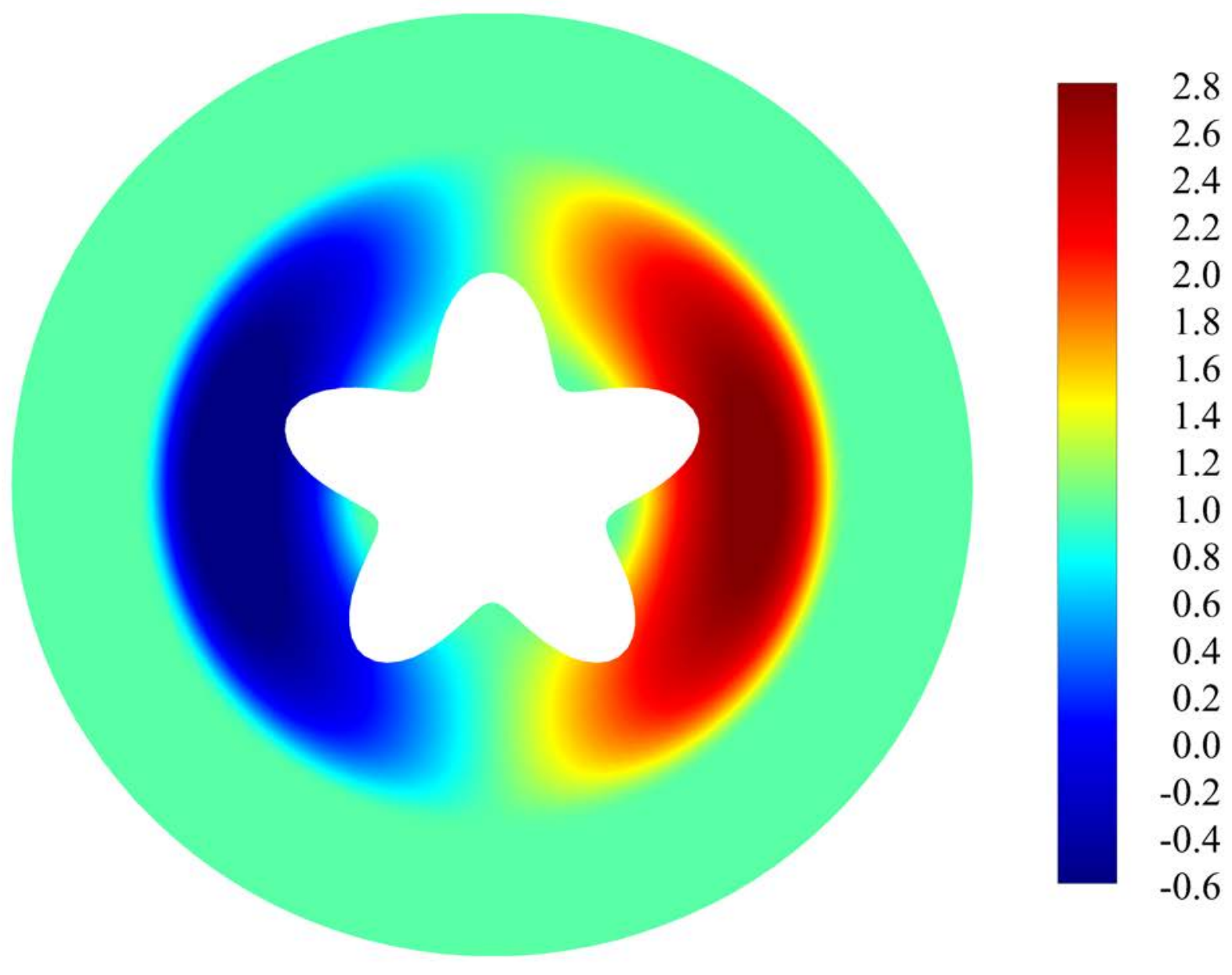}}\quad
	\subfigure[]{\includegraphics[scale=0.28]{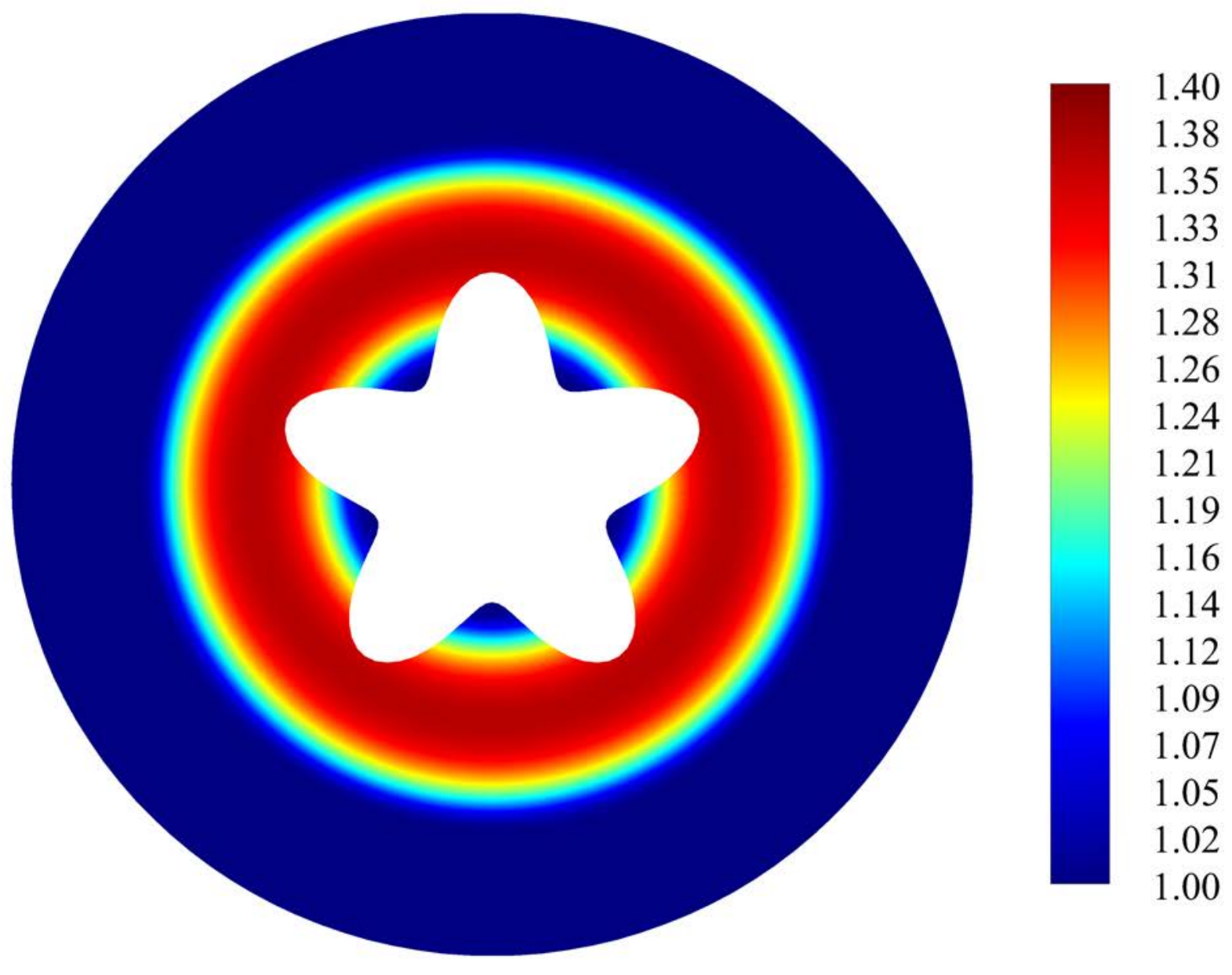}}
	\caption{Refraction indices of locally inhomogeneous media.}\label{nr}
\end{figure}
\begin{figure}[ht!]
	\centering
	\subfigure[$\kappa=10$]{\includegraphics[scale=0.27]{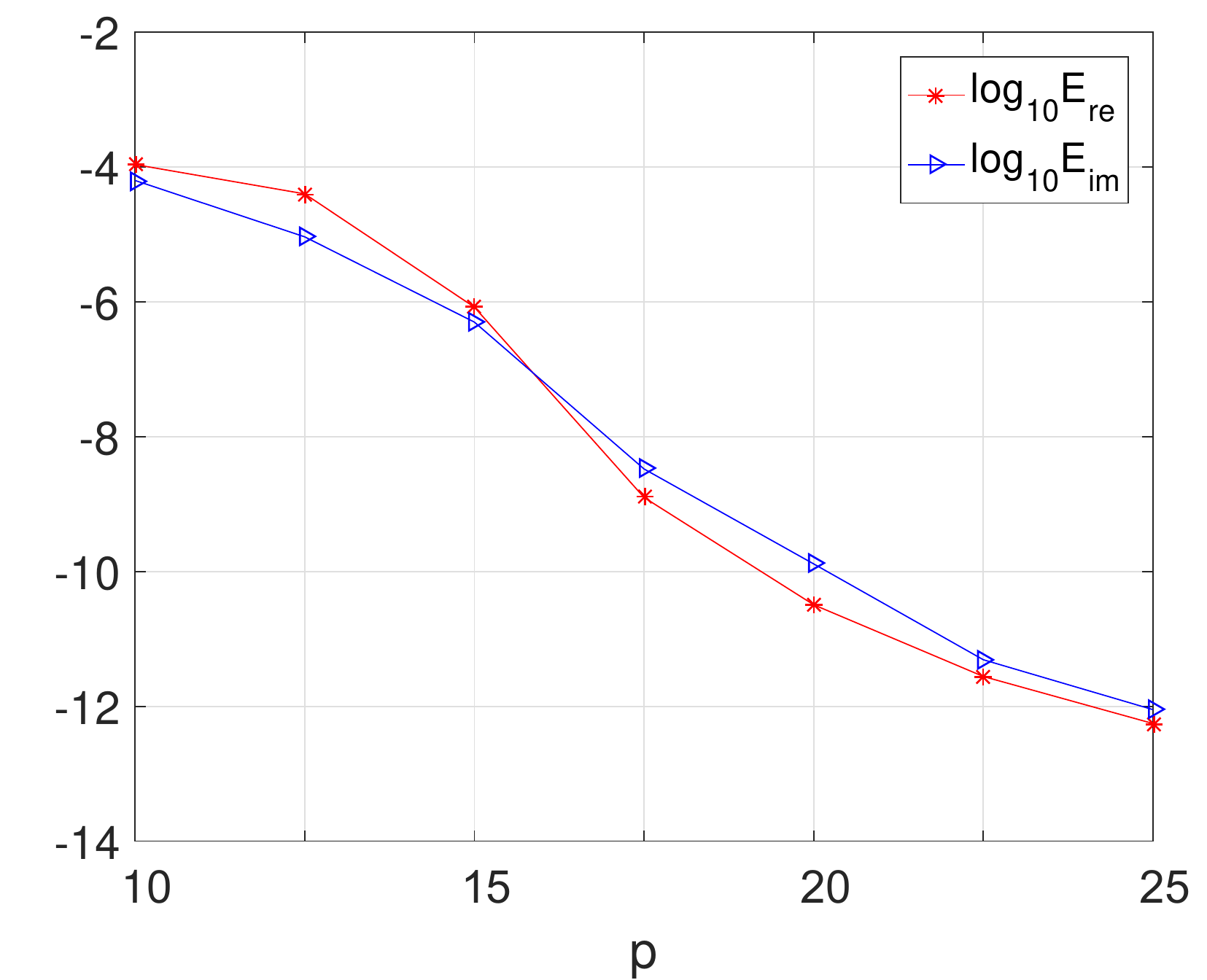}}
	\subfigure[$\kappa=20$]{\includegraphics[scale=0.27]{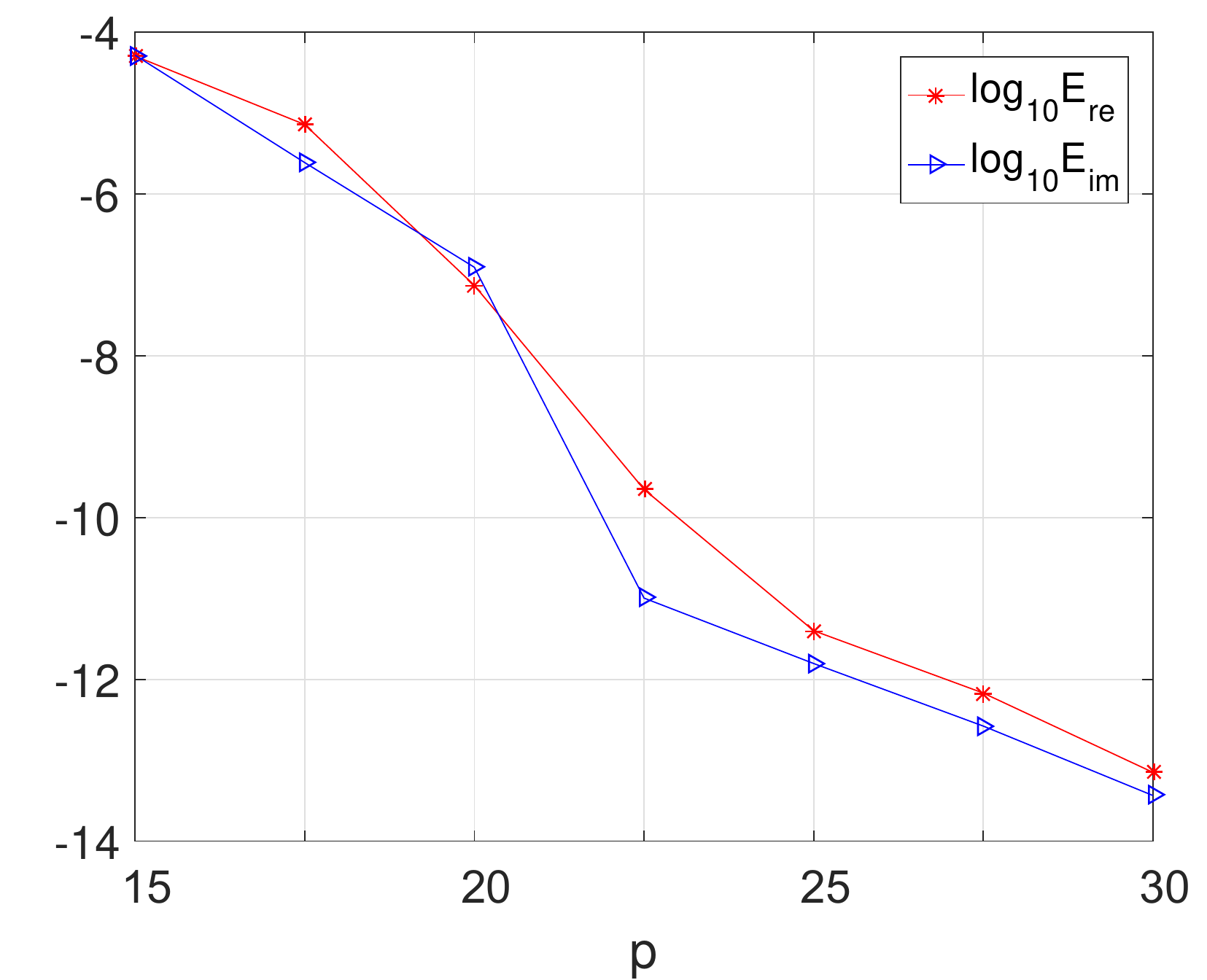}}
	\subfigure[$\kappa=30$]{\includegraphics[scale=0.27]{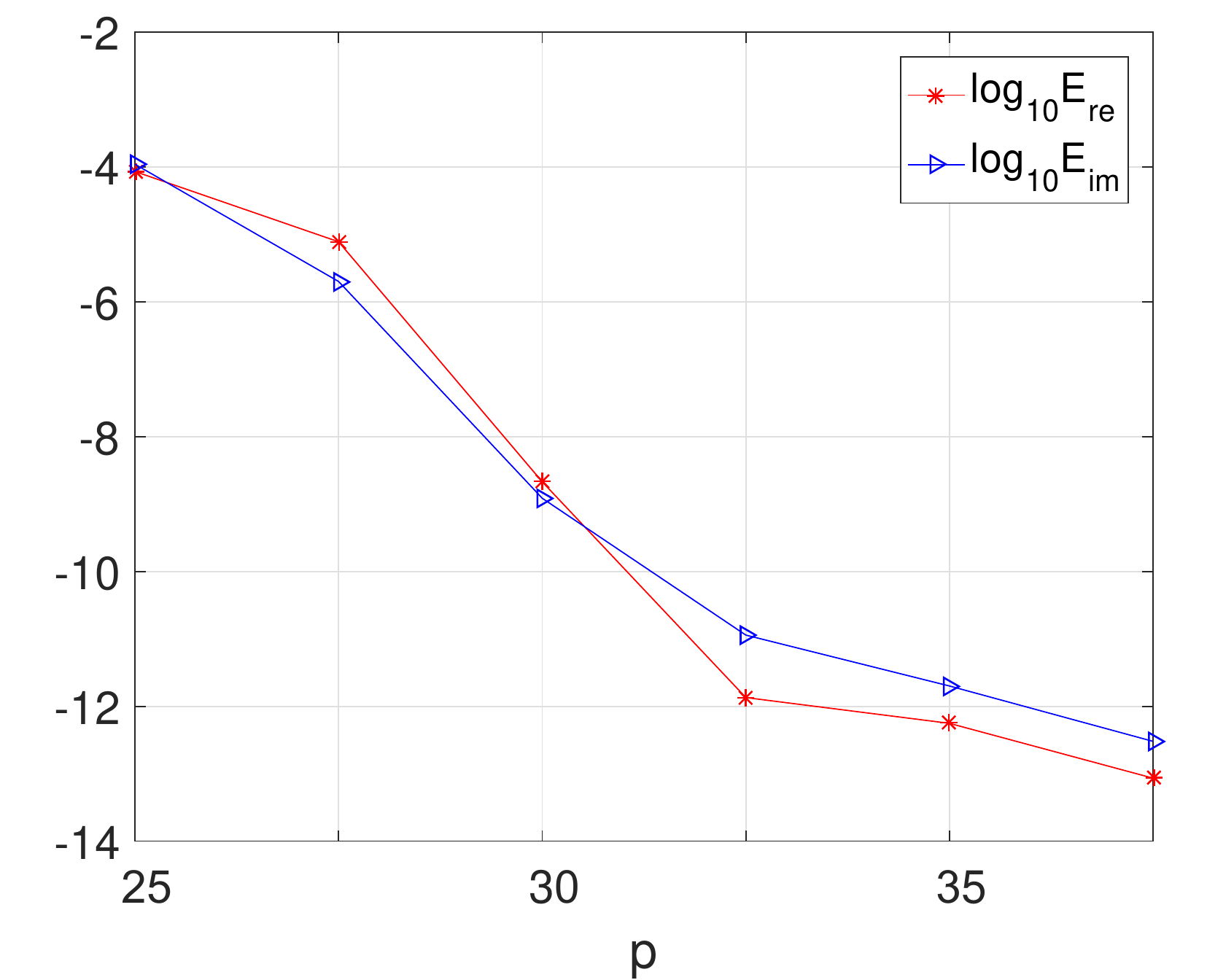}}
	\caption{$L^2$-errors against polynomial degree $p$ (inhomogeneous media).}\label{L2errorplotinhomo}
\end{figure}
\begin{figure}[ht!]
	\centering
	\subfigure[iterative solution]{\includegraphics[scale=0.25]{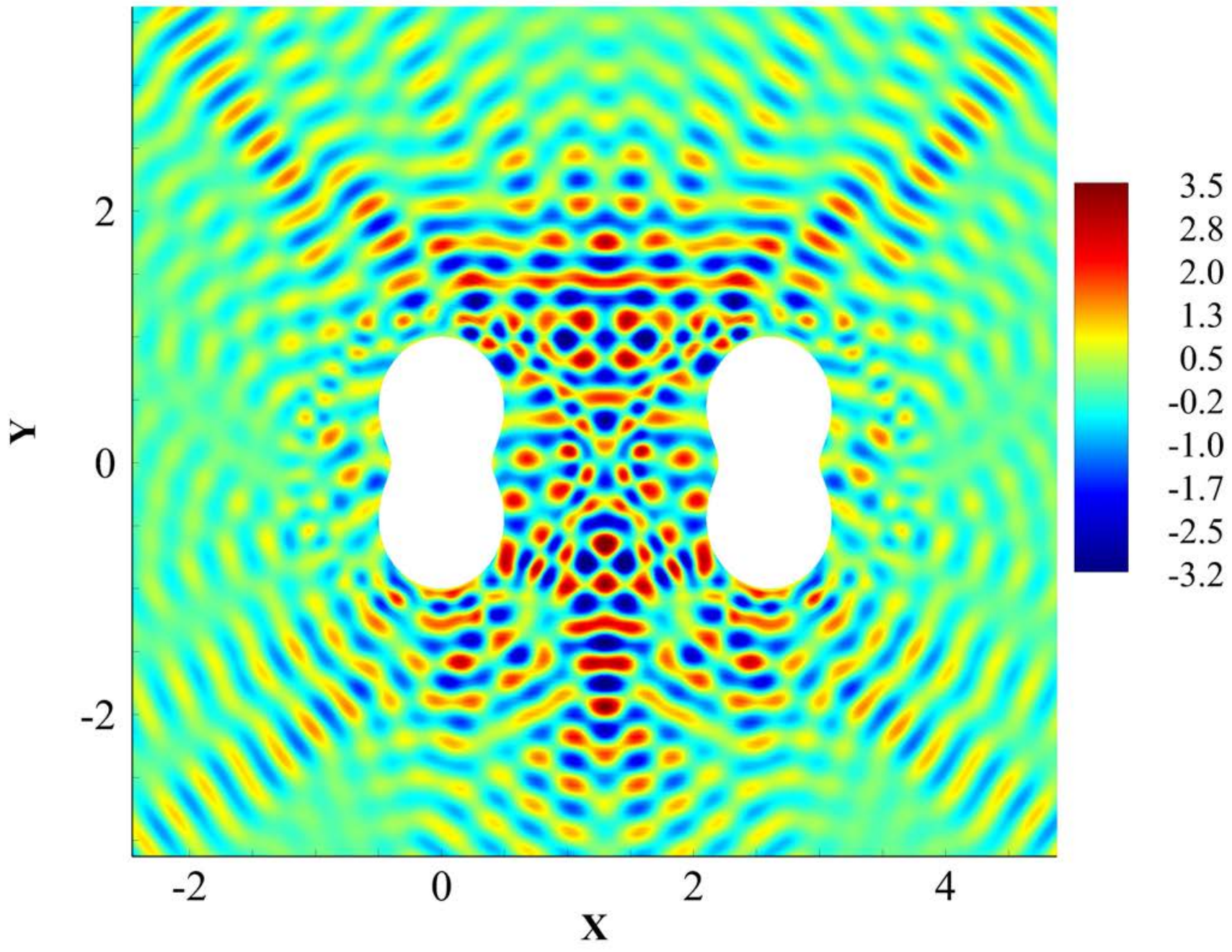}}\quad
	\subfigure[reference solution]{\includegraphics[scale=0.25]{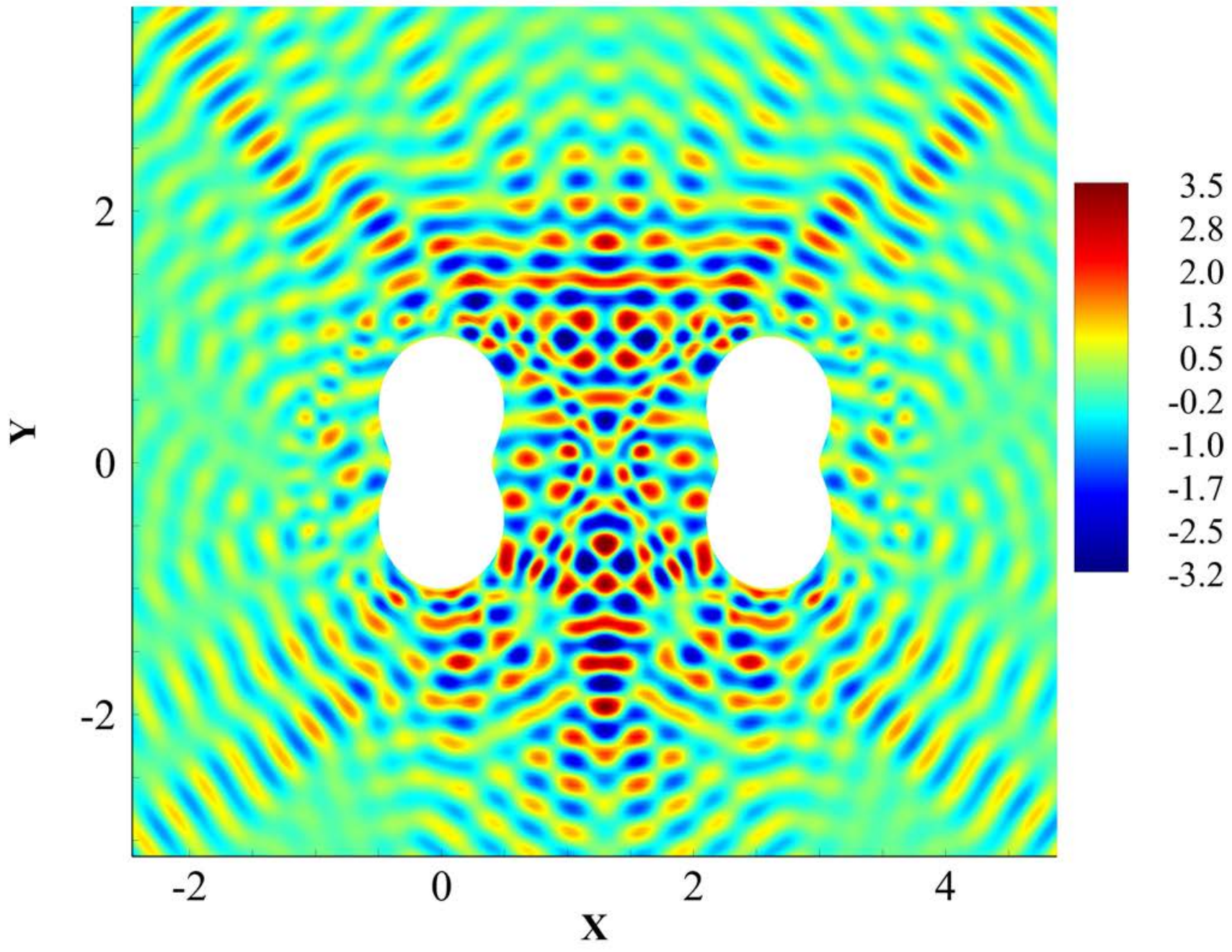}}\quad
	\subfigure[error]{\includegraphics[scale=0.25]{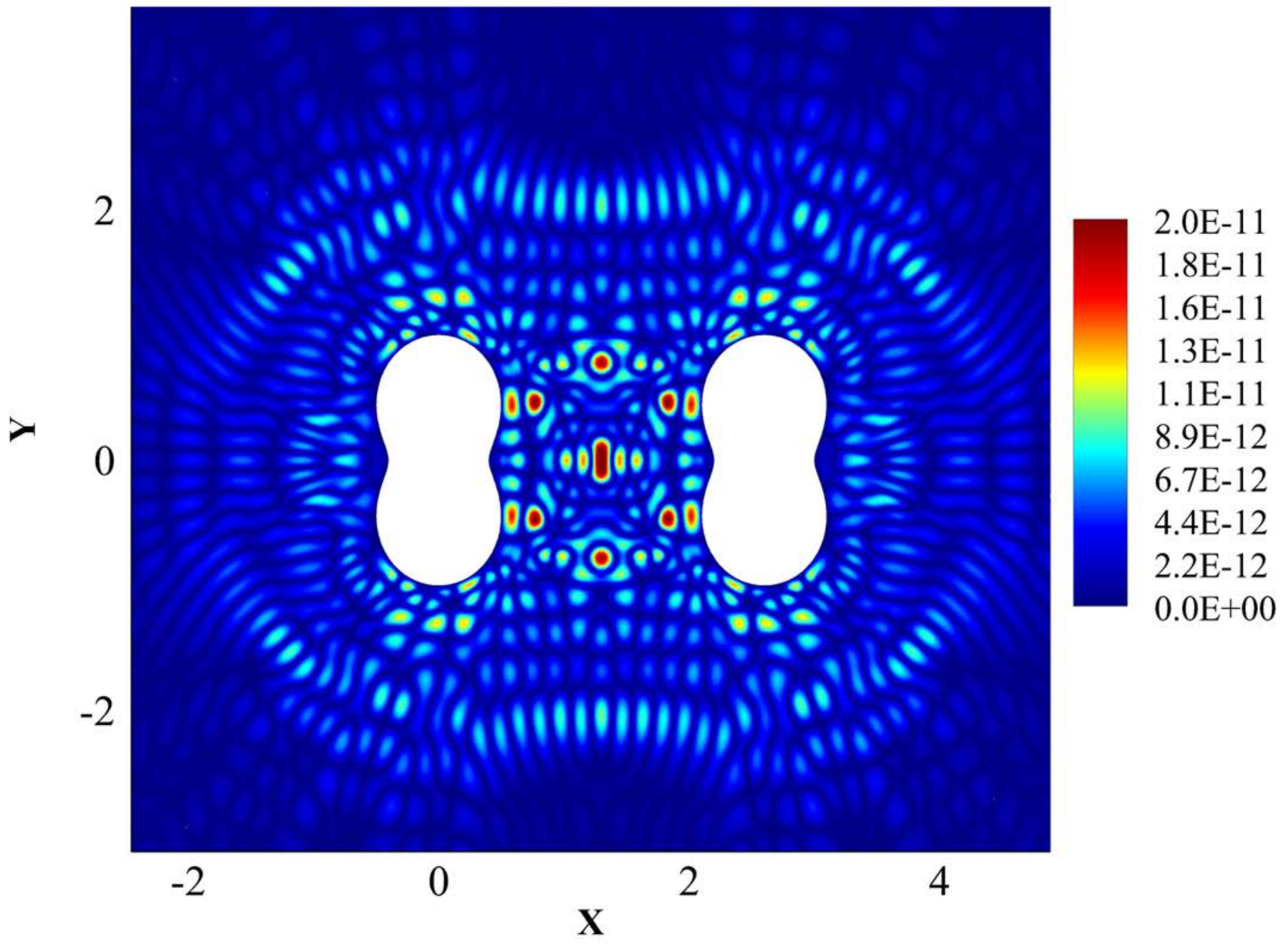}}
	\caption{Real parts of iterative numerical solution ($p=20$), reference solution and error for $\kappa=20$ and inhomogeneous refraction index given by \eqref{heterrefractionindex1}.}\label{scatteringwave2scatterersreINHOMO}
\end{figure}
\begin{figure}[ht!]
	\centering	
	\subfigure[$\kappa=10$]{\includegraphics[scale=0.27]{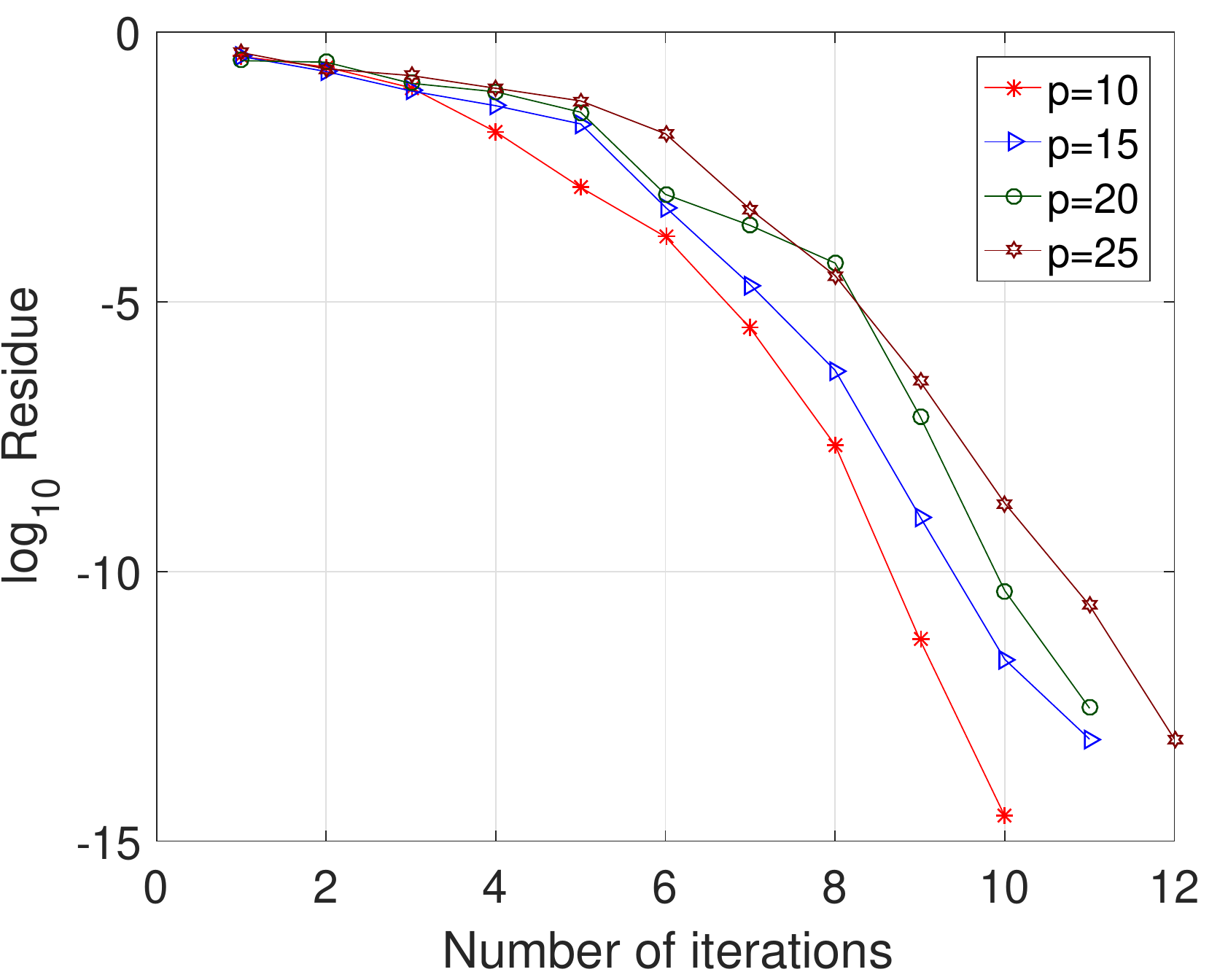}}
	\subfigure[$\kappa=20$]{\includegraphics[scale=0.27]{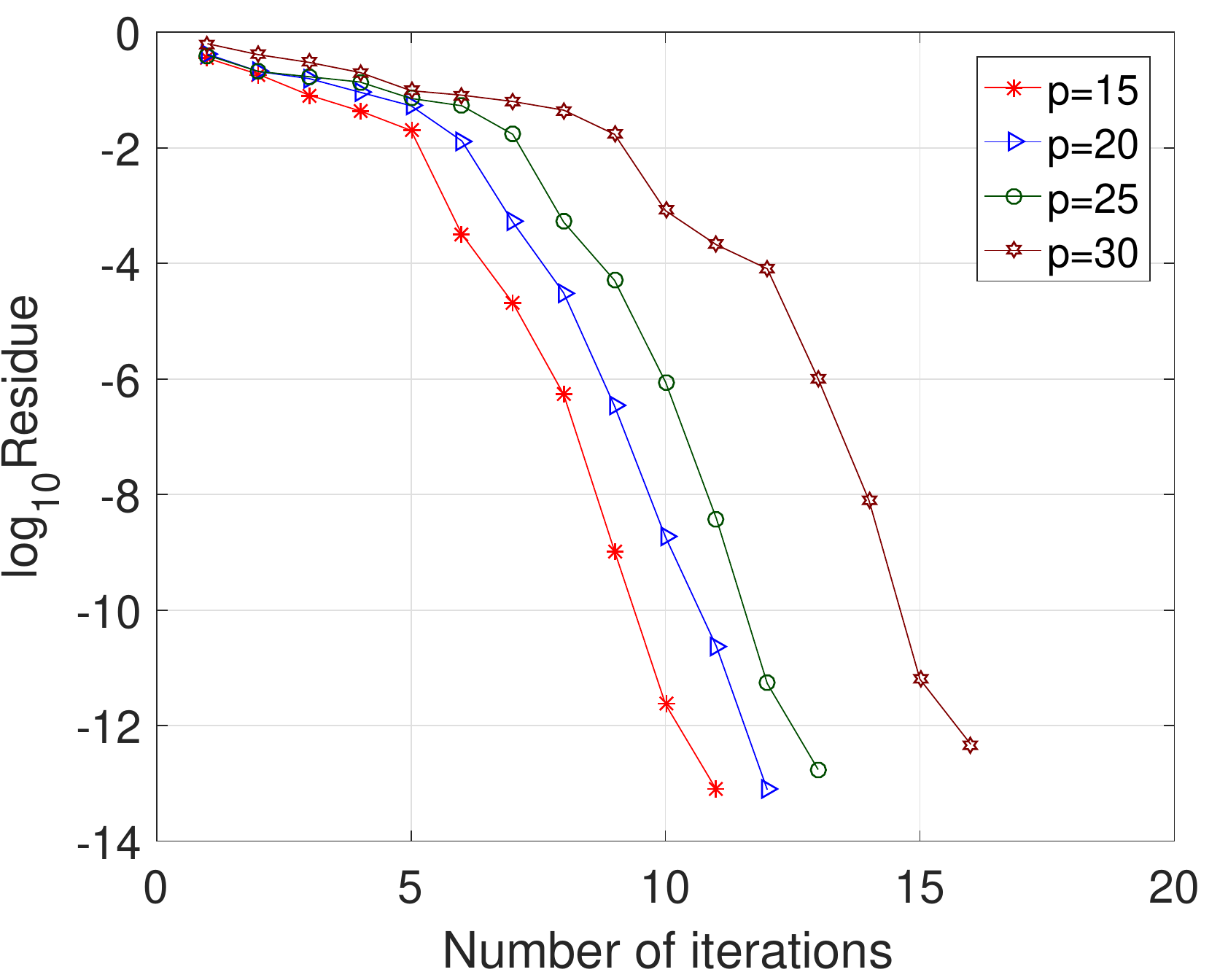}}
	\subfigure[$\kappa=30$]{\includegraphics[scale=0.27]{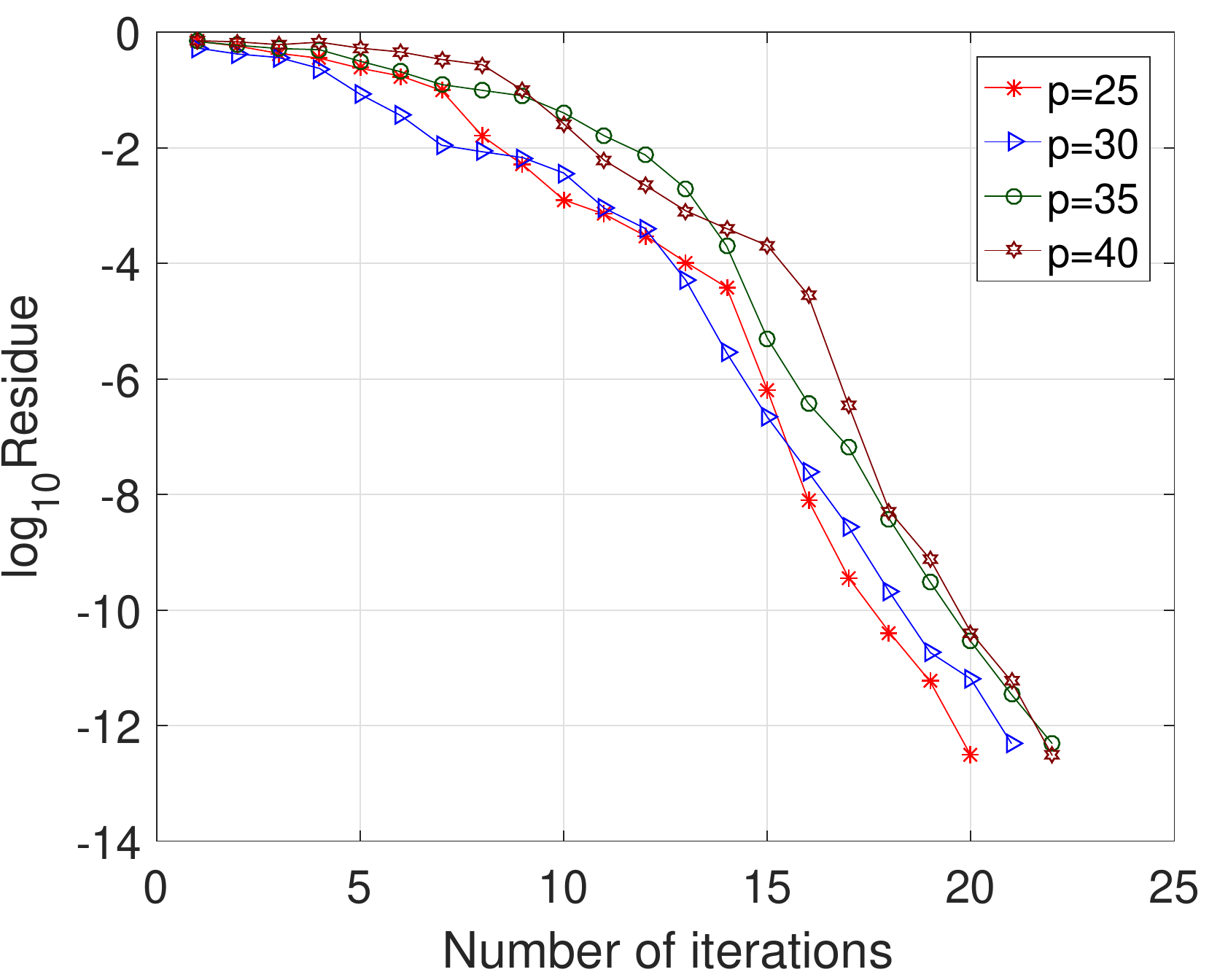}}
	\caption{Residuals against the number of iterations (inhomogeneous media).}\label{residueinhomo}
\end{figure}
\begin{table}[htbp]
	\center
	\begin{tabular}{|c|c|c|c|c|}
		\hline
		\multirow{2}{*}{$\kappa $} & \multirow{2}{*}{$p $} & \multicolumn{3}{c|}{number of iterations} \\\cline{3-5}
		& & GMRES for \eqref{blocksystem} & block GMRES for \eqref{blocksystem}& our iterative algorithm\\
		\hline
		\multirow{4}{*}{$10$}  & $10$ & $303$ & $80$ & $10$ \\\cline{2-5}
		& $15$ & $937$ & $112$ & $11$ \\\cline{2-5}
		& $20$ & $2744$ &$120$  & $11$ \\\cline{2-5}
		& $25$ & $3978$  & $126$ & $12$ \\
		\hline
		\multirow{2}{*}{$20$}  & $15$ & $742$ & $100$ & $11$  \\\cline{2-5}
		& $20$ & $1019$ & $153$ & $12$ \\\cline{2-5}
		& $25$ & $4021$ & $166$ & $14$ \\\cline{2-5}
		& $30$ & $5769$ & $284$ &   $16$\\
		\hline
	\end{tabular}
	\caption{The number of iterations  using different numerical methods for multiple scattering problems in locally inhomogeneous media.}\label{table2}
\end{table}

{\bf Example 6:} Set the refraction index
\begin{equation}\label{heterrefractionindex}
n(\bs x)=
\begin{cases}
x\exp(-1/(1-16(|\bs x-\bs c_i|-0.5)^2))+1, &0.25<|\bs x-\bs c_i|<0.75;\\[2pt]
1,& \mathrm{otherwise}.
\end{cases}
\end{equation}
The contour of $n(\bm{x})$ is plotted in Fig. \ref{nr} (b). Four scatterers determined by \eqref{scattererpara} with $k = 5, a=0.3, b=0.7, \theta_0=0$ and centers $(2.2n, 2.2m), n,m=0, 1$ are considered. The real part of the approximate scattering field is  plotted in Fig. \ref{multiscatterer4INHOMO} (a). Clearly, we can see stronger scattering in the region which has larger refraction index.
\begin{figure}[ht!]
	\centering
	\subfigure[refraction index given by\eqref{heterrefractionindex} ]{\includegraphics[scale=0.31]{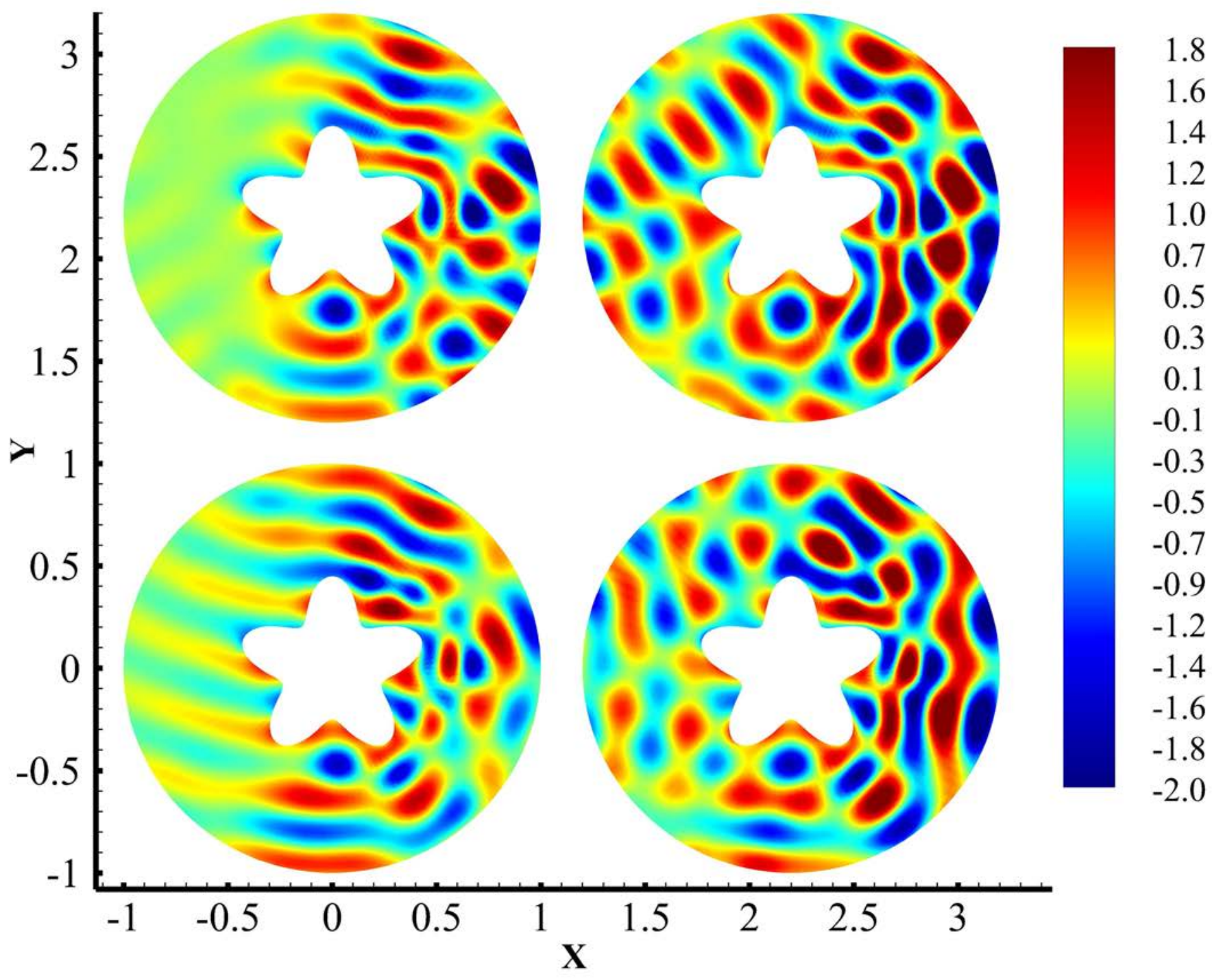}}\qquad
	\subfigure[refraction index given by \eqref{heterrefractionindex2}]{\includegraphics[scale=0.41]{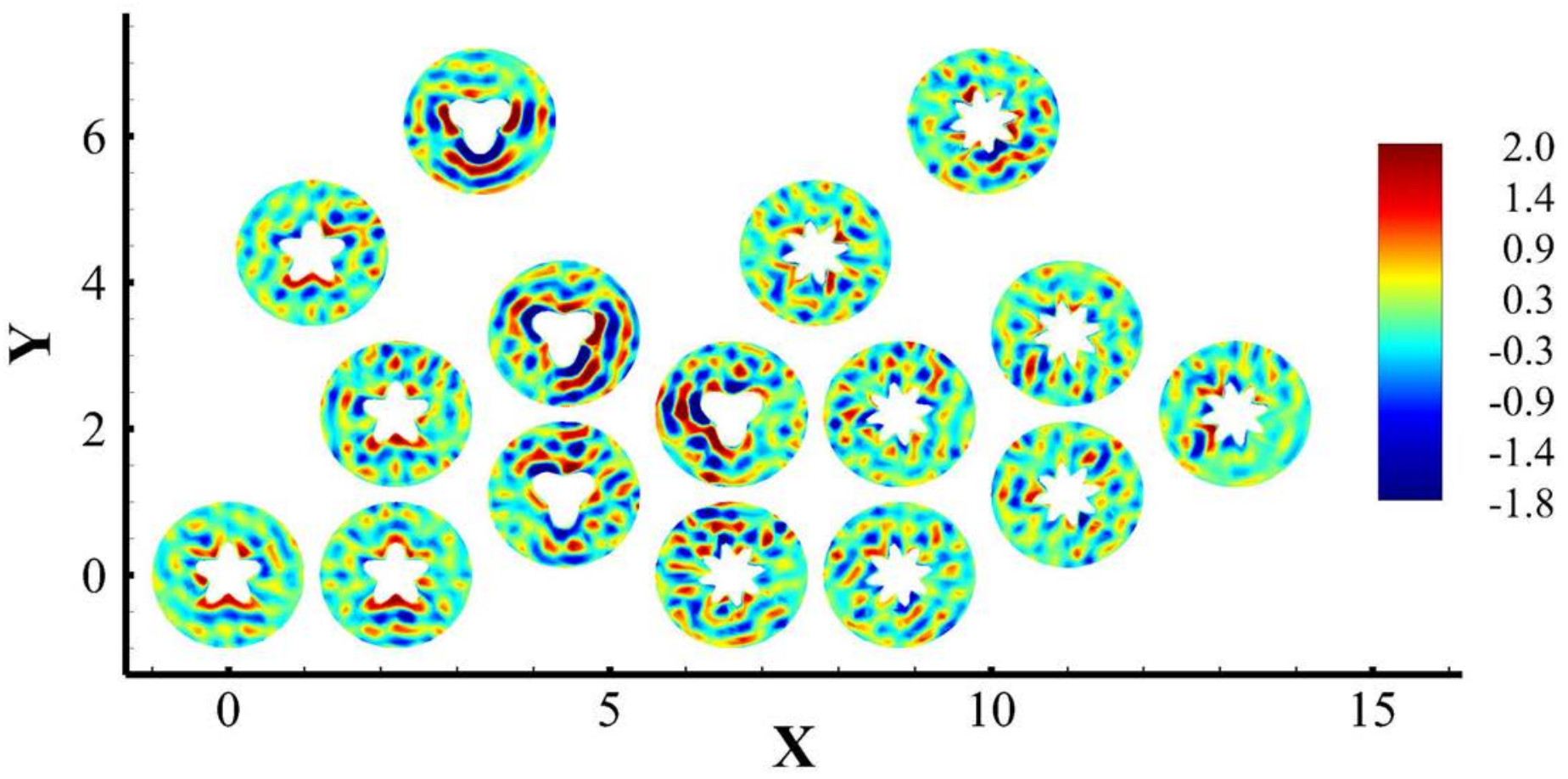}}
	\caption{Real parts of approximate scattering fields ($p=20$) due to $4$  and $16$ scatterers ($\kappa=20$).}\label{multiscatterer4INHOMO}
\end{figure}

\smallskip 
{\bf Example 7:} Consider the scattering problem with $16$ scatterers discussed in Example 3. Set the refraction index
\begin{equation}\label{heterrefractionindex2}
n(\bs x)=
\begin{cases}
{\rm exp}(-1/(1-16(|\bs x-\bs c_i|-0.5)^2))+1, &0.25<|\bs x-\bs c_i|<0.75;\\
1,& \mathrm{otherwise},
\end{cases}
\end{equation}
where the contour is plotted in Fig. \ref{nr} (c). The real part of the approximate scattering field is plotted in Fig. \ref{multiscatterer4INHOMO} (b).

\vskip 20pt
\noindent{\large\bf Conclusion and future work}
\vskip 10pt

In this paper, an efficient iterative method for the multiple scattering problem in locally inhomogeneous media is proposed and analyzed. This method is based on boundary integral equations on artificial boundaries. Thus, the iteration converges within a small number of iterations  which is nearly independent of the degree of freedom of  discretization. At each iteration, only the interior and  exterior problems (solved analytically for circular geometry) 
 with respect to single scatterer need to be solved individually. Therefore it has advantages in solving problems with a large number of scatterers. Moreover, it enjoys a great flexibility due to the capability of using various combinations of iterative algorithms and single scattering problem solvers.

For the future work, we will investigate the extension to penetrable scatterers and 3D multiple scattering problems. A preconditioned version for extremely large number of scatterers will also be considered.

\vskip 10pt
\noindent{\large\bf Acknowledgment}
\vskip 6pt
The research of the first and second author is  supported by NSFC (91430107, 11171104 and 11771138) and the Construct Program of the Key Discipline in Hunan Province. The research of the third author is support by NSFC (grant 11771137), the Construct Program of the Key Discipline in Hunan Province and a Scientific Research Fund of Hunan Provincial Education Department (No. 16B154). The research of the fourth author is  supported by Singapore MOE AcRF Tier 2 Grants (MOE2017-T2-2-144 and MOE2018-T2-1-059).

\bibliographystyle{elsarticle-num}

\end{document}